\documentclass{amsart}%
\usepackage{amsmath}
\usepackage{amsfonts}
\usepackage{amssymb}
\usepackage{graphicx}%
\newtheorem{lemma}{Lemma}[section]
\newtheorem{theorem}[lemma]{Theorem}
\newtheorem{remark}[lemma]{Remark}

\newtheorem{coro}[lemma]{Corollary}
\newtheorem{definition}[lemma]{Definition}
\newtheorem{example}[lemma]{Example}

\title{The Integration of Stepanov Remotely Almost Periodic Functions}

\author{David Cheban}
\address[D. Cheban]{State University of Moldova\\
Vladimir Andrunachievici Institute of Mathematics and Computer Science\\
Laboratory of Differential Equations\
str. Academiei 5\\
MD--2028 Chi\c{s}in\u{a}u, Moldova} \email[D.
Cheban]{david.ceban@usm.md, davidcheban@yahoo.com}

\date{\today}
\subjclass{34C27; 37B20; 37B55} \keywords{Remotely Almost Periodic
Solution; Non-autonomous Dynamical Systems; Cocycles; Stepanov
Almost Periodic Function}

\begin{document}

\begin{abstract}
The aim of this paper is to study the problem of the integration
of Stepanov remotely almost periodic functions. We prove that
every compact primitive of a Stepanov remotely almost periodic
function with a minimal $\omega$-limit set is remotely almost
periodic. This fact proves the conjecture previously formulated by
the author.
\end{abstract}

\maketitle

\section{Introduction}\label{Sec1}

In this article, we continue the research started in the author's
works \cite{Che_2024.2,Che_2024.3} in which remotely almost
periodic functions are studied. Let $\mathbb
R:=(-\infty,+\infty),\ \mathbb R_{+}:=[0,+\infty)$, $\mathbb T\in
\{\mathbb R_{+},\mathbb R\}$, $\mathfrak B$ be a Banach space
over field $P$ ($P=\mathbb R$ or the set of all complex numbers
$\mathbb C$) and $C(\mathbb T,\mathfrak B)$ be the space of all
continuous mappings $\varphi :\mathbb T\to \mathfrak B$ equipped
with the compact open topology. Denote by $(C(\mathbb T,\mathfrak
B),\mathbb T,\sigma)$ the shift dynamical system on $C(\mathbb
T,\mathfrak B)$, where $\sigma$ is the mapping from $\mathbb T
\times C(\mathbb T,\mathfrak B)$ to $C(\mathbb T,\mathfrak B)$
defined by $\sigma(h,\varphi):=\varphi^{h}$ and
$\varphi^{h}(t):=\varphi(t+h)$ for all $(t,\varphi)\in \mathbb T
\times C(\mathbb T,\mathfrak B)$.

A subset $A$ from $\mathbb T$ is called relatively dense in
$\mathbb T$ if there exists a positive number $l$ such that
$[a,a+l]\bigcap A\not= \emptyset$ for all $a\in \mathbb T$, where
$[a,a+l]:=\{t\in \mathbb T|\ a\le t\le a+l\}$.

Recall \cite{Che_2024.2} that a function $\varphi \in C(\mathbb
T,\mathfrak B)$ is said to be:
\begin{enumerate}
\item almost periodic if for every positive number $\varepsilon$ the
set
\begin{equation}\label{eqIAP1}
\mathcal P(\varepsilon,\varphi):=\{\tau\in \mathbb T|\
|\varphi(t+\tau)-\varphi(t)|<\varepsilon\ \ \mbox{for all}\ \ t\in
\mathbb T\}\nonumber
\end{equation}
is relatively dense in $\mathbb T$; \item asymptotically almost
periodic if there are two functions $p,r\in C(\mathbb T,\mathfrak
B)$ such that the following two conditions are fulfilled:
\begin{enumerate}
\item $\varphi(t)=p(t)+r(t)$ for all $t\in \mathbb T$; \item the
function $p$ is almost periodic and $r\in C_{0}(\mathbb
T,\mathfrak B)$, where by $C_{0}(\mathbb T,\mathfrak B)$ the
family of all functions from $C(\mathbb T,\mathfrak B)$ vanishing
at the $+\infty$, i.e., $|r(t)|\to 0$ as $t\to +\infty$;
\end{enumerate}
\item remotely almost periodic if for every $\varepsilon >0$ there
exists a relatively dense in $\mathbb T$ subset $\mathcal
P(\varepsilon,\varphi)$ such that for every $\tau \in \mathcal
P(\varepsilon,\varphi)$ we have a positive number
$L(\varepsilon,\varphi,\tau)$ so that
\begin{equation}\label{eqIAP2}
|\varphi(t+\tau)-\varphi(t)|<\varepsilon \nonumber
\end{equation}
for all $|t|\ge L(\varepsilon,\varphi,\tau)$.
\end{enumerate}

The notion of remotely almost periodicity (on the real axis
$\mathbb R:=(-\infty,+\infty)$) for scalar function was introduced
and studied by Sarason \cite{Sar_1984}. Recall that a continuous
and bounded function $f:\mathbb R \to \mathbb R$ is said to be
remotely almost periodic if for every $\varepsilon >0$ there
exists a positive number $l$ such that on every segment
$[a,a+l]\subset \mathbb R$ there exists at least one number $\tau
\in [a,a+l]$ such that
\begin{equation}\label{eqI1}
    d_{\infty}(f^{\tau},f)<\varepsilon ,\ \ \mbox{where}\ \ d_{\infty}(f,g):=\limsup\limits_{|t|\to
    +\infty}|f(t)-g(t)| \nonumber
\end{equation}
and $f^{\tau}(t):=f(t+\tau)$ for all $t\in \mathbb R$. The
remotely almost periodic ($RAP$) functions form a closed
subalgebra, $RAP$, of $BUC$ (the algebra of bounded and uniformly
continuous complex valued functions on $\mathbb R$). The main
result of Sarason is that $RAP$ is generated as a Banach algebra,
by $AP$ (the algebra of Bohr almost periodic functions) and
another algebra $SO$ (consisting of functions which oscillate
slowly at $\infty$).

Remotely almost periodic functions on the semi-axis $\mathbb
R_{+}:=[0,+\infty)$ with values in the Banach space were
introduced by Ruess and Summers \cite{RS_1986} (see also Baskakov
\cite{Bas_2013} and \cite{Che_2024.0}-\cite{Che_2024.2}).

Let $\varphi \in C(\mathbb T,\mathfrak B)$ be a remotely almost
periodic function and
\begin{equation}\label{eqPQ1}
\Phi (t)=\int_{0}^{t}\varphi(s)ds \nonumber
\end{equation}
be a compact primitive of the function $\varphi$, i.e.,
$Q:=\overline{\varphi(\mathbb T)}$ is a compact subset of $\mathbb
T$.

If the Banach space $\mathfrak B$ is one-dimensional, then the
compact primitive $\Phi$ is also remotely almost periodic
\cite{ZP_2011}.

Let $c_0$ be the Banach space of all numerical sequences
$\{\xi_k\}_{k=1}^{\infty}$ that converges to $0$ with the norm
$\|\xi\|:=\sup\{|\xi_{k}|\ |\ k=1,2,\ldots\}$.

We will say that a Banach space $\mathfrak B$ does not contain
$c_0$ if it has no subspace isomorphic to $c_0$.

In the work \cite{VMM_2022} was established the following result:
the bounded primitive $\Phi$ of asymptotically almost periodic
function $\varphi$ is remotely almost periodic if and only if the
Banach space $\mathfrak B$ does not contain $c_0$.

In author's paper \cite{Che_2024.2} the remotely almost
periodicity of the compact primitive $\Phi$ of asymptotically
almost periodic function $\varphi$ was established.

The following conjecture was formulated in the work
\cite{Che_2024.2}.

\textbf{Conjecture}. Let $\varphi \in C(\mathbb T,\mathfrak B)$ be
a remotely almost periodic (respectively, remotely $\tau$-periodic
or remotely stationary) functions. Assume that the following
conditions are fulfilled:
\begin{enumerate}
\item the function $\varphi$ is positively Lagrange stable, i.e.,
the set $\Sigma_{\varphi}^{+}:=\{\varphi^{h}|\ h\ge 0\}$ is
precompact in $C(\mathbb T,\mathfrak B)$; \item the $\omega$-limit
set $\omega_{\varphi}$ of the function $\varphi$ is a minimal set
of the shift dynamical system $(C(\mathbb T,\mathfrak B),\mathbb
T,\sigma)$.
\end{enumerate}

Then the compact primitive $\Phi$ of the function $\varphi$ is
remotely almost periodic (respectively, remotely $\tau$-periodic
or remotely stationary).

The main result of this paper gives a positive answer to above
conjecture.

In this paper we study the problem of integration of Stepanov
remotely almost periodic functions. This study continues the
author's series of works dedicated to the study of remotely almost
periodic motions of dynamical systems and solutions of
differential equations
\cite{Che_2009},\cite{Che_2024.0}-\cite{Che_2024.1} and
\cite{Che_2024.4,Che_2023wp}.

The paper is organized as follows. In the second Section, we
collect some known notions of dynamical systems and facts about
remotely almost periodic motions and remotely almost periodic
functions that we use in this paper. The third Section contains
the main results of the paper and it is dedicated to the study the
problem of integration of Stepanov remotely almost periodic
functions. We prove that every compact primitive of the Stepanov
remotely almost periodic function $\varphi$ with the minimal
$\omega$-limit set $\omega_{\varphi}$ is remotely almost periodic.

\section{Preliminary}\label{Sec2}

\subsection{Remotely almost periodic motions of dynamical
systems}\label{Sec2.1}

Let $(X,\rho_{X})$ and $(Y,\rho_{Y})$ be two complete metric
spaces with the distance $\rho_{X}$ and $\rho_{Y}$
respectively\footnote{In what follows, in the notation $\rho_{X}$
(respectively, $\rho_{Y}$), we will omit the index $X$
(respectively, $Y$) if this does not lead to a misunderstanding.},
let $\mathbb R:=(-\infty,+\infty)$, $\mathbb Z :=\{0,\pm 1, \pm 2,
\ldots \}$, $\mathbb S =\mathbb R$ or $\mathbb Z$,  $\mathbb
S_{+}=\{t \in \mathbb S |\quad t \ge 0 \}$ and $\mathbb S_{-}=\{t
\in \mathbb S| \quad t \le 0 \}$. Let $\mathbb T \in \{\mathbb S,\
\mathbb S_{+}\}$ and $(X,\mathbb S_{+},\pi)$ (respectively,
$(Y,\mathbb S, \sigma )$) be an autonomous one-sided
(respectively, two-sided) dynamical system on $X$ (respectively,
$Y$).

Let $(X,\mathbb T,\pi)$ be a dynamical system.

\begin{definition}\label{defSP1} A point $x\in X$ (respectively, a motion $\pi(t,x)$) is
said to be:
\begin{enumerate}
\item[-] stationary, if $\pi(t,x)=x$ for every $t\in \mathbb T$;
\item[-] $\tau$-periodic ($\tau >0$ and $\tau \in \mathbb T$), if
$\pi(\tau,x)=x$; \item[-] asymptotically stationary (respectively,
asymptotically $\tau$-periodic), if there exists a stationary
(respectively, $\tau$-periodic) point $p\in X$ such that
\begin{equation}\label{eqAP1*}
\lim\limits_{t\to \infty}\rho(\pi(t,x),\pi(t,p))=0.\nonumber
\end{equation}
\end{enumerate}
\end{definition}

\begin{theorem}\label{thAAP1}\cite[Ch.I]{Che_2009} A point $x\in X$ is asymptotically $\tau$-periodic if and
only if the sequences $\{\pi(k\tau,x)\}_{k=0}^{\infty}$ converges.
\end{theorem}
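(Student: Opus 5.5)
We prove both implications directly from the definitions, using only continuity of $\pi$ and the group property $\pi(t+s,x)=\pi(t,\pi(s,x))$. Write $x_k:=\pi(k\tau,x)$ for $k\ge 0$.

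\emph{Necessity.} Suppose $p$ is $\tau$-periodic and $\rho(\pi(t,x),\pi(t,p))\to 0$ as $t\to\infty$. Since $\pi(\tau,p)=p$, we have $\pi(k\tau,p)=p$ for every $k\in\mathbb N$. Taking $t=k\tau$ gives $\rho(x_k,p)\to 0$, so the sequence $\{x_k\}$ converges to $p$.

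\emph{Sufficiency.} Suppose $x_k\to p$. By continuity of $\pi(\tau,\cdot)$,
\begin{equation*}
\pi(\tau,p)=\lim_{k\to\infty}\pi(\tau,x_k)=\lim_{k\to\infty}x_{k+1}=p,
\end{equation*}
so $p$ is $\tau$-periodic. It remains to pass from the discrete times $k\tau$ to all $t\to\infty$. For $t\ge 0$ write $t=k\tau+s$ with $k=[t/\tau]$ and $s\in[0,\tau)$. Then $\pi(t,x)=\pi(s,x_k)$ and $\pi(t,p)=\pi(s,p)$, hence
\begin{equation*}
\rho(\pi(t,x),\pi(t,p))=\rho(\pi(s,x_k),\pi(s,p)).
\end{equation*}

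\emph{Main step: uniformity in $s$.} We need
\begin{equation*}
\sup_{s\in[0,\tau]}\rho(\pi(s,x_k),\pi(s,p))\to 0\quad\text{as } k\to\infty.
\end{equation*}
This follows from joint continuity of $\pi$ on the compact set $[0,\tau]\times\{p\}$. Argue by contradiction. If the supremum does not tend to $0$, there exist $\varepsilon>0$, a subsequence $k_j$, and points $s_j\in[0,\tau]$ with $\rho(\pi(s_j,x_{k_j}),\pi(s_j,p))\ge\varepsilon$. Passing to a further subsequence, $s_j\to s_0\in[0,\tau]$. Joint continuity of $\pi$ then gives $\pi(s_j,x_{k_j})\to\pi(s_0,p)$ and $\pi(s_j,p)\to\pi(s_0,p)$, so the distance between them tends to $0$, a contradiction. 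Hence $\rho(\pi(t,x),\pi(t,p))\to 0$ as $t\to\infty$, and $x$ is asymptotically $\tau$-periodic.

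\emph{Remarks.} In the discrete case $\mathbb S=\mathbb Z$ the parameter $s$ ranges over a finite set, so the uniformity step is trivial. The stationary case is covered as well, since a stationary point is $\tau$-periodic for every $\tau>0$.
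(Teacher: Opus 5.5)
Your proof is correct and complete. The paper gives no argument of its own here (it only cites \cite[Ch.~I]{Che_2009}), and your route is the standard one: $\tau$-periodicity of the limit $p$ from continuity of $\pi(\tau,\cdot)$, then $t=k\tau+s$ with $\sup_{s\in[0,\tau]}\rho(\pi(s,x_k),\pi(s,p))\to 0$ by joint continuity of $\pi$ and compactness of $[0,\tau]$.

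The only thing to adjust is your closing remark on the stationary case. An asymptotically stationary point is indeed asymptotically $\tau$-periodic, but convergence of $\{\pi(k\tau,x)\}$ for a single $\tau$ does not give asymptotic stationarity; a non-stationary $\tau$-periodic point is a counterexample. So the theorem does not characterize the stationary case.
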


\begin{definition}\label{defLS1} A point $\widetilde{x}\in X$ is said
to be $\omega$-limit (respectively, $\alpha$-limit) for $x\in X$
if there exists a sequence $\{t_k\}\subset \mathbb S_{+}$
(respectively, $\{t_k\}\subset \mathbb S_{-}$) such that $t_k\to
+\infty$ (respectively, $-\infty$) and $\pi(t_k,x)\to
\widetilde{x}$ as $k\to \infty$.
\end{definition}

Denote by $\omega_{x}$ (respectively, $\alpha_{x}$) the set of all
$\omega$-limit (respectively, $\alpha$-limit) points of $x\in X$.

\begin{definition}\label{defSAP1}  We will call
a point $x\in X$ (respectively, a motion $\pi(t,x)$) remotely
$\tau$-periodic ($\tau\in \mathbb T$ and $\tau
>0\widetilde{}$) if
\begin{equation}\label{eqSAP_1}
 \lim\limits_{t\to
+\infty}\rho(\pi(t+\tau,x),\pi(t,x))=0 .
\end{equation}
\end{definition}

\begin{remark}\label{remS1.0} The motions of dynamical systems possessing the property
(\ref{eqSAP_1}) was studied in the works of Cryszka
\cite{Gry_2018} and Pelczar \cite{Pel_1985}.
\end{remark}

\begin{definition}\label{defLS2} A point $x$ is called positively Lagrange
stable, if the semi-trajectory $\Sigma_{x}^{+}:=\{\pi(t,x)|\ t\in
\mathbb S_{+}\}$ is a precompact subset of $X$.
\end{definition}

\begin{theorem}\label{th1.3.9}\cite[Ch.I]{Che_2020} Let $x\in X$ be
positively Lagrange stable and $\tau\in\mathbb T\ (\tau >0)$. Then the following
statements are equivalent:
\begin{enumerate}
\item[a.] the motion $\pi(t,x)$ is remotely $\tau$-periodic;
\item[b.] every point $p\in\omega_{x}$ is $\tau$-periodic.
\end{enumerate}
\end{theorem}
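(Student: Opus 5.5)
We prove the two implications separately. Both rest on the continuity of the map $\pi(\tau,\cdot)$ and on the compactness of $\overline{\Sigma_{x}^{+}}$, which holds because $x$ is positively Lagrange stable. Since $X$ is complete, this compactness also gives $\omega_{x}\neq\emptyset$ and
\begin{equation*}
\lim\limits_{t\to+\infty}\rho(\pi(t,x),\omega_{x})=0 .
\end{equation*}

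\textbf{a. $\Rightarrow$ b.} Let $p\in\omega_{x}$. Choose $t_k\to+\infty$ with $\pi(t_k,x)\to p$. By continuity of $\pi(\tau,\cdot)$ we get $\pi(t_k+\tau,x)=\pi(\tau,\pi(t_k,x))\to\pi(\tau,p)$. Condition (\ref{eqSAP_1}) gives $\rho(\pi(t_k+\tau,x),\pi(t_k,x))\to 0$. Passing to the limit yields $\rho(\pi(\tau,p),p)=0$, so $p$ is $\tau$-periodic. This direction does not even need Lagrange stability.

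\textbf{b. $\Rightarrow$ a.} We argue by contradiction. Suppose (\ref{eqSAP_1}) fails. Then there are $\varepsilon_0>0$ and $t_k\to+\infty$ such that
\begin{equation*}
\rho(\pi(t_k+\tau,x),\pi(t_k,x))\ge\varepsilon_0 \quad\text{for all } k .
\end{equation*}
Since $\Sigma_{x}^{+}$ is precompact, we may pass to a subsequence and assume $\pi(t_k,x)\to p$. Because $t_k\to+\infty$, this limit satisfies $p\in\omega_{x}$. Continuity of $\pi(\tau,\cdot)$ then gives $\pi(t_k+\tau,x)\to\pi(\tau,p)$. Letting $k\to\infty$ in the inequality gives $\rho(\pi(\tau,p),p)\ge\varepsilon_0>0$. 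This contradicts assumption b.

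The only point needing care is the extraction step. It uses precompactness of the semi-trajectory together with completeness of $X$, which guarantees that the subsequential limit exists in $X$, and hence in $\omega_{x}$. The rest is routine.
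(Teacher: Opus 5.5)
Your proof is correct in both directions. The paper gives no proof of this statement and simply imports it from \cite[Ch.I]{Che_2020}, so there is nothing in the paper to compare against; your argument is the standard one (continuity of $\pi(\tau,\cdot)$ along a sequence $t_k\to+\infty$ with $\pi(t_k,x)$ convergent, plus extraction of a convergent subsequence from the precompact semi-trajectory for b $\Rightarrow$ a), and you are right that a $\Rightarrow$ b does not need Lagrange stability.
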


\begin{definition}\label{defSAP2} A point $x$ (respectively, a
motion $\pi(t,x)$) is said to be remotely stationary, if it is
remotely $\tau$-periodic for every $\tau \in \mathbb T$.
\end{definition}

\begin{coro}\label{corSAP1} Let $x\in X$ be positively Lagrange stable. Then the following
statements are equivalent:
\begin{enumerate}
\item[a.] the motion $\pi(t,x)$ is remotely stationary; \item[b.]
every point $p\in\omega_{x}$ is stationary.
\end{enumerate}
\end{coro}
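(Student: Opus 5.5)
The plan is to reduce Corollary \ref{corSAP1} directly to Theorem \ref{th1.3.9}, applied to each $\tau$ separately. By Definition \ref{defSAP2}, the motion $\pi(t,x)$ is remotely stationary if and only if it is remotely $\tau$-periodic for every $\tau\in\mathbb T$ with $\tau>0$. For $\tau=0$, condition (\ref{eqSAP_1}) holds trivially, so it imposes nothing. Since $x$ is positively Lagrange stable, Theorem \ref{th1.3.9} applies for each fixed $\tau>0$. Hence (a) is equivalent to the statement that every $p\in\omega_{x}$ is $\tau$-periodic for every $\tau>0$ in $\mathbb T$.

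It remains to check that a point $p$ satisfying $\pi(\tau,p)=p$ for all $\tau>0$ in $\mathbb T$ is exactly a stationary point in the sense of Definition \ref{defSP1}. One direction is immediate from the definitions. For the other, stationarity requires $\pi(t,p)=p$ for all $t\in\mathbb T$. For $t>0$ this is the hypothesis, and for $t=0$ it is the identity axiom $\pi(0,p)=p$.

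The only point needing care arises when $\mathbb T=\mathbb S$ is two-sided and negative $t$ must be handled. If $t<0$, we use the group property:
\begin{equation*}
\pi(t,p)=\pi(t,\pi(-t,p))=\pi(0,p)=p .
\end{equation*}
This works because $\pi(-t,p)=p$, as $-t>0$.

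Combining the two steps, (a) holds if and only if every $p\in\omega_{x}$ is $\tau$-periodic for all $\tau>0$, which holds if and only if every $p\in\omega_{x}$ is stationary. That is (b). I expect no real obstacle. The whole content lies in Theorem \ref{th1.3.9}, and the corollary is the quantifier interchange "for all $\tau$, for all $p$" $=$ "for all $p$, for all $\tau$".
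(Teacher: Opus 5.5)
Your proof is correct and takes the same approach as the paper, which simply says the corollary follows from the definition of remote stationarity together with Theorem \ref{th1.3.9}. Your version adds the routine check that being $\tau$-periodic for every $\tau>0$ is the same as being stationary, including the negative-time case handled by the group property.
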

\begin{proof} This statement follows directly from the
corresponding definition and Theorem \ref{th1.3.9}.
\end{proof}

\begin{definition}\label{defAP1} A point $x\in X$ of dynamical
system $(X,\mathbb T,\pi)$ is said to be:
\begin{enumerate}
\item almost recurrent if for every $\varepsilon >0$
\begin{equation}\label{eqAR1}
\mathcal T(\varepsilon,p):=\{\tau \in \mathbb T|\
\rho(\pi(\tau,p),p)<\varepsilon\}\nonumber
\end{equation}
is relatively dense in $\mathbb T$; \item recurrent if it is
almost recurrent and Lagrange stable;
 \item almost periodic if for every
$\varepsilon
>0$ the set
\begin{equation}\label{eqAP1}
\mathcal P(\varepsilon,p):=\{\tau \in \mathbb T|\
\rho(\pi(t+\tau,p),\pi(t,p))<\varepsilon \ \ \mbox{for all}\ t\in
\mathbb T\}\nonumber
\end{equation}
is relatively dense in $\mathbb T$; \item positively
(respectively, negatively) Poisson stable if $x\in \omega_{x}$
(respectively, $x\in \alpha_{x}$); \item asymptotically stationary
(respectively, asymptotically $\tau$-periodic or asymptotically
almost periodic) if there exists a stationary (respectively,
$\tau$-periodic or almost periodic) point $p\in X$ such that
\begin{equation}\label{eqAP3}
\lim\limits_{t\to \infty}\rho(\pi(t,x),\pi(t,p))=0.\nonumber
\end{equation}
\end{enumerate}
\end{definition}

\begin{definition}\label{defIS1} A subset $M\subseteq X$ is said
to be positively invariant (respectively, negatively invariant or
invariant) if $\pi(t,M)\subseteq M$ (respectively, $M\subseteq
\pi(t,M)$ or $\pi(t,M)=M$) for all $t\in \mathbb T$.
\end{definition}

\begin{definition}\label{defRAP2} A subset $M$ is said to be
equi-almost periodic if for every $\varepsilon >0$ there exists a
relatively dense subset $\mathcal P(\varepsilon,M)$ such that
\begin{equation}\label{eqRAP2}
\rho(\pi(t+\tau,p),\pi(t,p))<\varepsilon \nonumber
\end{equation}
for all $t\in \mathbb T$, $\tau \in \mathcal P(\varepsilon,M)$ and
$p\in M$.
\end{definition}

\begin{lemma}\label{lRAP1}\cite{RS_1986} Let $x\in X$ be a
positively Lagrange stable point of the dynamical system
$(X,\mathbb S_{+},\pi)$. The motion $\pi(t,x)$ is remotely almost
periodic if and only if its $\omega$-limit set $\omega_{x}$ is
equi-almost periodic.
\end{lemma}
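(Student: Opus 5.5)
We need to prove Lemma \ref{lRAP1}: for a positively Lagrange stable point $x$ of $(X,\mathbb S_{+},\pi)$, the motion $\pi(t,x)$ is remotely almost periodic if and only if $\omega_{x}$ is equi-almost periodic. Here remote almost periodicity of a motion means: for every $\varepsilon>0$ there is a relatively dense set $\mathcal P(\varepsilon,x)$ such that for each $\tau\in\mathcal P(\varepsilon,x)$ there exists $L=L(\varepsilon,x,\tau)$ with $\rho(\pi(t+\tau,x),\pi(t,x))<\varepsilon$ for all $t\ge L$. The plan is to treat the two implications separately, passing limits through compactness of $\overline{\Sigma_{x}^{+}}$.

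\emph{Necessity.} Let $\pi(t,x)$ be remotely almost periodic and fix $\varepsilon>0$. Take $\tau\in\mathcal P(\varepsilon/2,x)$ and $p\in\omega_{x}$, so $p=\lim\pi(t_k,x)$ with $t_k\to+\infty$. For fixed $t\ge0$ and all large $k$ we have $t+t_k\ge L(\varepsilon/2,x,\tau)$, hence
\begin{equation*}
\rho(\pi(t+\tau+t_k,x),\pi(t+t_k,x))<\varepsilon/2 .
\end{equation*}
Letting $k\to\infty$ and using continuity of $\pi(t,\cdot)$ and $\pi(t+\tau,\cdot)$ gives $\rho(\pi(t+\tau,p),\pi(t,p))\le\varepsilon/2<\varepsilon$. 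This holds for all $t\ge0$ and all $p\in\omega_{x}$, so $\mathcal P(\varepsilon/2,x)$ serves as $\mathcal P(\varepsilon,\omega_{x})$. This direction is routine.

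\emph{Sufficiency.} Assume $\omega_{x}$ is equi-almost periodic and fix $\varepsilon>0$. We set $\mathcal P(\varepsilon,x):=\mathcal P(\varepsilon/3,\omega_{x})$ and, for each $\tau$ in this set, must find $L$. Argue by contradiction: if no such $L$ exists, there are $t_k\to+\infty$ with
\begin{equation*}
\rho(\pi(t_k+\tau,x),\pi(t_k,x))\ge\varepsilon .
\end{equation*}
By positive Lagrange stability we may pass to a subsequence with $\pi(t_k,x)\to p$, and necessarily $p\in\omega_{x}$. Continuity of $\pi(\tau,\cdot)$ gives $\pi(t_k+\tau,x)\to\pi(\tau,p)$, so in the limit $\rho(\pi(\tau,p),p)\ge\varepsilon$. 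But taking $t=0$ in the equi-almost periodicity estimate gives $\rho(\pi(\tau,p),p)<\varepsilon/3$, a contradiction.

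The main point to check is therefore that for a \emph{fixed} $\tau$ no uniformity in $t$ is needed; only the pointwise limit at $t=0$ is used. The uniform-in-$t$ requirement is recovered automatically: once $L$ exists for this fixed $\tau$, the estimate holds for every $t\ge L$. The only ingredients are that $\omega_{x}$ is nonempty, compact and invariant, which follows from Lagrange stability, and that the semi-trajectory has limit points along every sequence $t_k\to+\infty$.
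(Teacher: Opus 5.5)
Your proof is correct. The paper gives no proof of Lemma~\ref{lRAP1}; it only cites Ruess and Summers. So there is no in-paper argument to compare against, and your direct compactness argument serves as a self-contained proof. It uses only the semigroup property, continuity of each map $\pi(t,\cdot)$, and precompactness of $\Sigma_x^+$.

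A few side remarks on your argument:
\begin{itemize}
\item Necessity does not use positive Lagrange stability at all. The limit passage works for any $p\in\omega_x$.
\item Your sufficiency step only uses the $t=0$ instance $\rho(\pi(\tau,p),p)<\varepsilon$ for $p\in\omega_x$. Since $\omega_x$ is invariant, this instance is actually equivalent to full equi-almost periodicity of $\omega_x$: apply it to $\pi(t,p)\in\omega_x$.
\item The $\varepsilon/3$ is unnecessary. $\mathcal P(\varepsilon,\omega_x)$ itself already works, because the strict bound $<\varepsilon$ contradicts the limit inequality $\ge\varepsilon$.
\end{itemize}
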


Consider a two-sided dynamical system $(X,\mathbb S,\pi)$.

\begin{definition}\label{defTSR1} A point $x\in X$ (respectively, a motion $\pi(t,x)$) is said to be
two-sided remotely almost periodic if the following two conditions
are fulfilled:
\begin{enumerate}
\item the point $x$ is Lagrange stable, i.e., the set
$\Sigma_{x}:=\{\pi(t,x)|\ t\in \mathbb S\}$ is precompact; \item
for every $\varepsilon >0$ there exists a relatively dense in
$\mathbb S$ subset $\mathcal P(\varepsilon,x)$ such that
\begin{equation}\label{eqTSR1}
\lim\sup\limits_{|t|\to
+\infty}\rho(\pi(t+\tau,x),\pi(t,x))<\varepsilon \nonumber
\end{equation}
or equivalently for every $\varepsilon >0$ and $\tau \in \mathcal
P(\varepsilon,x)$ there exists a positive number
$L(\varepsilon,x,\tau)$ such that
\begin{equation}\label{eqTSE2}
\rho(\pi(t+\tau,x),\pi(t,x))<\varepsilon \nonumber
\end{equation}
for all $|t|\ge L(\varepsilon,x,\tau)$.
\end{enumerate}
\end{definition}

\begin{lemma}\label{lRAP_010} \cite{Che_2024.1} A point $x$
is remotely $\tau$-periodic (respectively, remotely stationary) if
and only if for every $\varepsilon
>0$ there exists a relatively dense in $\mathbb T$ subset $\mathcal
P(x,\varepsilon)$ such that
\begin{enumerate}
\item for every $\tau \in \mathcal P(x,\varepsilon)$ there exists a
number $L(x,\varepsilon,\tau)>0$ for which we have
\begin{equation}\label{eqE1}
\rho(\pi(t+\tau,x),\pi(t,x))<\varepsilon \nonumber
\end{equation}
for all $t\ge L(x,\varepsilon,\tau)$ and \item $\{\tau \mathbb Z\}
\subset \mathcal P(x,\varepsilon)$ (respectively, $\mathbb T
\subseteq \mathcal P(x,\varepsilon)$).
\end{enumerate}
\end{lemma}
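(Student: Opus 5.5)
The equivalence in Lemma \ref{lRAP_010} has two directions. In both I will use one observation: if $\mathcal P$ contains the subgroup-like set $\tau\mathbb Z\cap\mathbb T$ (or all of $\mathbb T$), then $\mathcal P$ is automatically relatively dense in $\mathbb T$. Indeed, every interval $[a,a+\tau]$ contains a multiple of $\tau$.

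\emph{Necessity.} Suppose $x$ is remotely $\tau$-periodic, i.e.\ (\ref{eqSAP_1}) holds. Put $\mathcal P(x,\varepsilon):=\{k\tau\mid k\in\mathbb Z\}\cap\mathbb T$; we only need $k\ge 0$ if $\mathbb T=\mathbb S_{+}$. This set is relatively dense with $l=\tau$, so it remains to find $L(x,\varepsilon,k\tau)$. Write
\[\rho(\pi(t+k\tau,x),\pi(t,x))\le\sum_{j=0}^{k-1}\rho(\pi(t+(j+1)\tau,x),\pi(t+j\tau,x)).\]
By (\ref{eqSAP_1}) there is $L_0$ such that each summand is smaller than $\varepsilon/k$ once $t\ge L_0$, since then $t+j\tau\ge L_0$ for every $j\ge 0$. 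Hence $L(x,\varepsilon,k\tau):=L_0(\varepsilon/k)$ works for $k\ge1$; for $k=0$ the distance is identically zero. If negative multiples are needed (the case $\mathbb T=\mathbb S$), take $\tau'=-k\tau>0$ and substitute $s=t+k\tau$. Then $\rho(\pi(t+k\tau,x),\pi(t,x))=\rho(\pi(s,x),\pi(s+k'\tau,x))$ with $k'=-k>0$, which is small for $s\ge L_0$, that is, for $t\ge L_0+|k|\tau$.

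For remote stationarity, Definition \ref{defSAP2} gives (\ref{eqSAP_1}) for every $h\in\mathbb T$. So I take $\mathcal P(x,\varepsilon)=\mathbb T$, with $L(x,\varepsilon,h)$ supplied directly by (\ref{eqSAP_1}) at the period $h$. Negative $h$ are handled by the same substitution as above.

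\emph{Sufficiency.} Suppose that for every $\varepsilon$ we have such a $\mathcal P(x,\varepsilon)\supseteq\tau\mathbb Z$. Then in particular $\tau\in\mathcal P(x,\varepsilon)$ for every $\varepsilon>0$. So for each $\varepsilon$ there is $L$ with $\rho(\pi(t+\tau,x),\pi(t,x))<\varepsilon$ for all $t\ge L$, which is exactly (\ref{eqSAP_1}). In the stationary case the same argument applies to every $\tau\in\mathbb T$.

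There is no real obstacle here. The only points needing care are the telescoping estimate for multiples $k\tau$, with $\varepsilon$ replaced by $\varepsilon/k$, and the index shift for negative multiples when $\mathbb T=\mathbb S$.
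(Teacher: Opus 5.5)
Your proof is correct and complete. The paper gives no proof of this lemma (it is quoted from the author's earlier work), and your argument is the standard direct one: telescope via the triangle inequality to get $\rho(\pi(t+k\tau,x),\pi(t,x))<k\cdot(\varepsilon/k)$ for $t\ge L_0(\varepsilon/k)$, shift the index for negative multiples, and for the converse read off the defining limit from $\tau\in\mathcal P(x,\varepsilon)$ holding for every $\varepsilon$.

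There are only two cosmetic points. In the negative-multiple case the threshold should be written $L_0(\varepsilon/|k|)+|k|\tau$ rather than $L_0+|k|\tau$. Your reading of $\{\tau\mathbb Z\}$ as $\tau\mathbb Z\cap\mathbb T$ is the right one, since for $\mathbb T=\mathbb S_{+}$ the inclusion cannot hold literally.
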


\begin{theorem}\label{thTSR1}\cite{Che_2024.1} A motion $\pi(t,x)$ of dynamical
system $(X,\mathbb S,\pi)$ is two-sided remotely almost periodic
if and only if its dynamically limit set
$\Delta_{x}:=\alpha_{x}\bigcup \omega_{x}$ is equi-almost
periodic.
\end{theorem}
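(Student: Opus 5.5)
The plan is to prove the two implications directly from the definitions, transferring estimates between the motion $\pi(t,x)$ and points of $\Delta_x$ by passing to limits along sequences $t_k\to\pm\infty$. This is the two-sided analogue of Lemma \ref{lRAP1}. Throughout, $x$ is assumed Lagrange stable, as required by condition (1) of Definition \ref{defTSR1}. Then $\omega_x$ and $\alpha_x$ are nonempty, and every sequence $\pi(t_k,x)$ with $|t_k|\to\infty$ has a subsequence converging to a point of $\omega_x$ (if $t_k\to+\infty$) or of $\alpha_x$ (if $t_k\to-\infty$).

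\emph{Necessity.} Let $\pi(t,x)$ be two-sided remotely almost periodic and fix $\varepsilon>0$. I will show that the relatively dense set $\mathcal P(\varepsilon/2,x)$ from Definition \ref{defTSR1} serves as $\mathcal P(\varepsilon,\Delta_x)$. Take $p\in\omega_x$, so $p=\lim\pi(t_k,x)$ with $t_k\to+\infty$, and fix $t\in\mathbb S$ and $\tau\in\mathcal P(\varepsilon/2,x)$. By continuity of $\pi(t,\cdot)$ and $\pi(t+\tau,\cdot)$,
\begin{equation*}
\rho(\pi(t+\tau,p),\pi(t,p))=\lim_{k\to\infty}\rho(\pi(t+t_k+\tau,x),\pi(t+t_k,x)).
\end{equation*}
Since $|t+t_k|\ge L(\varepsilon/2,x,\tau)$ for large $k$, every term on the right is $<\varepsilon/2$. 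Hence the limit is $\le\varepsilon/2<\varepsilon$. The case $p\in\alpha_x$ is identical, using $t_k\to-\infty$. This is exactly where the condition on $|t|\to\infty$, rather than only on $t\to+\infty$, is needed. Since the estimate is uniform in $p\in\Delta_x$ and $t\in\mathbb S$, the set $\Delta_x$ is equi-almost periodic.

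\emph{Sufficiency.} Assume $\Delta_x$ is equi-almost periodic, fix $\varepsilon>0$, and set $\mathcal P(\varepsilon,x):=\mathcal P(\varepsilon/3,\Delta_x)$, which is relatively dense in $\mathbb S$. I claim that $\limsup_{|t|\to\infty}\rho(\pi(t+\tau,x),\pi(t,x))\le\varepsilon/2<\varepsilon$ for every $\tau$ in this set. Suppose not. Then there are $t_k$ with $|t_k|\to\infty$ and $\rho(\pi(t_k+\tau,x),\pi(t_k,x))\ge\varepsilon/2$. Passing to a subsequence, we may assume $t_k\to+\infty$ or $t_k\to-\infty$. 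By Lagrange stability we may further assume $\pi(t_k,x)\to p\in\Delta_x$. Continuity of $\pi(\tau,\cdot)$ gives $\pi(t_k+\tau,x)=\pi(\tau,\pi(t_k,x))\to\pi(\tau,p)$. Hence $\rho(\pi(\tau,p),p)\ge\varepsilon/2$. This contradicts the equi-almost periodicity estimate with $t=0$, namely $\rho(\pi(\tau,p),p)<\varepsilon/3$.

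The only delicate points are the limit passages, where strict inequalities become non-strict. I handle them by the slack between $\varepsilon/3$, $\varepsilon/2$ and $\varepsilon$. I expect no real obstacle beyond this bookkeeping and the use of Lagrange stability to extract convergent subsequences in both time directions.
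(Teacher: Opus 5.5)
Your proof is correct. The paper gives no proof of this theorem; it quotes it from \cite{Che_2024.1}. Your two limit passages are the standard argument behind Lemma~\ref{lRAP1}, run at both ends at once: for necessity you transfer the estimate along $\pi(t+t_k,x)$ with $t_k\to\pm\infty$ to points of $\Delta_x$, and for sufficiency you extract $\pi(t_k,x)\to p\in\omega_x\cup\alpha_x$ and contradict the $t=0$ case of equi-almost periodicity.

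Doing it directly is better than gluing one-sided results. You could apply Lemma~\ref{lRAP1} to the flow and to its time reversal and then combine via Lemma~\ref{lEAP2}(2); that settles necessity. For sufficiency, however, this black-box route only yields two separate relatively dense sets of almost periods, one for $t\to+\infty$ and one for $t\to-\infty$. Their intersection need not be relatively dense, while Definition~\ref{defTSR1} requires a single set. Your choice $\mathcal P(\varepsilon,x):=\mathcal P(\varepsilon/3,\Delta_x)$ serves both ends at once.

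One point to state more sharply: in the sufficiency direction, Lagrange stability of $x$ is an additional hypothesis, not something condition (1) of Definition~\ref{defTSR1} supplies. It does not follow from equi-almost periodicity of $\Delta_x$. For the translation flow $\pi(t,x)=x+t$ on $\mathbb R$, one has $\Delta_x=\emptyset$, which is vacuously equi-almost periodic, yet $x$ is not two-sided remotely almost periodic. So the theorem must be read, as in Lemma~\ref{lRAP1}, for Lagrange stable $x$, which is what you do.
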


\begin{lemma}\label{lEAP2}\cite{Che_2024.1} Assume that $K_{1},K_{2},\dots,K_{m}$
are the equi-almost periodic compact invariant subsets of
$(X,\mathbb T,\pi)$. Then the following statements hold:
\begin{enumerate}
\item the compact invariant subset $K:=K_1\times K_2\times \ldots
K_m$ of the product dynamical system $(X^{m},\mathbb T,[\pi])$
($X^{m}:=X\times X\times \ldots \times X$ and
$[\pi](t,x):=(\pi(t,x_1),\pi(t,x_2),\ldots,\pi(t,x_m))$ for all
$t\in \mathbb T$ and $x:=(x_1,x_2,\ldots,x_m)\in X^{m}$) is
equi-almost periodic; \item the compact invariant subset
$K:=\bigcup_{i=1}^{m} K_{i}$ of $(X,\mathbb T,\pi)$ is equi-almost
periodic; \item the compact invariant subset $K :=\bigcap
_{i=1}^{m}K_{i}$ of $(X,\mathbb T,\pi)$ is equi-almost periodic.
\end{enumerate}
\end{lemma}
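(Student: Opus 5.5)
We need to prove Lemma \ref{lEAP2}: if $K_1,\dots,K_m$ are equi-almost periodic compact invariant sets, then so are their product, their union and their intersection.

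\textbf{The key tool.} The plan is to show that the sets of common $\varepsilon$-almost periods are relatively dense. For one set this is given by the hypothesis; for finitely many sets simultaneously we must prove it. Once we have a relatively dense set of common almost periods, the three claims follow almost immediately.

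\textbf{Step 1: a Bochner-type characterization.} For a compact invariant set $K$ we work in the space $C(\mathbb T,K)$ with the uniform metric
\begin{equation*}
d(\phi,\psi):=\sup_{t\in\mathbb T}\rho(\phi(t),\psi(t)).
\end{equation*}
To $K$ we associate the family of motions
\begin{equation*}
\mathcal F_K:=\{\pi(\cdot,p)\mid p\in K\},
\end{equation*}
on which the shifts act by $\pi(\cdot+h,p)$. We claim that $K$ is equi-almost periodic if and only if the set
\begin{equation*}
\{(\pi(\cdot+h,p))_{p\in K}\mid h\in\mathbb T\}
\end{equation*}
is precompact in the uniform metric $\sup_{p\in K}\sup_{t}\rho(\cdot,\cdot)$. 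This is the usual Bochner argument applied to a single "big" function $F:\mathbb T\to C(K,X)$ with $F(t)(p):=\pi(t,p)$. Invariance gives $F(t)\in C(K,K)$; continuity in $p$ comes from continuity of $\pi$; and compactness of $K$ gives uniform continuity in $t$.

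With this formulation, equi-almost periodicity of $K$ is exactly Bohr almost periodicity of $F$ with values in the complete metric space $C(K,X)$ equipped with the sup metric. We therefore rely on the standard fact that Bohr almost periodicity of a uniformly continuous function into a metric space is equivalent to precompactness of its translates.

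\textbf{Step 2: product and common almost periods.} For several sets we form the tuple
\begin{equation*}
G(t):=(F_1(t),\dots,F_m(t)).
\end{equation*}
A finite product of precompact families of translates is precompact, so the translates of $G$ along the diagonal remain precompact. By Step 1, $G$ is almost periodic. Consequently, for every $\varepsilon>0$ the set of common $\varepsilon$-almost periods
\begin{equation*}
\bigcap_i\mathcal P(\varepsilon,K_i)
\end{equation*}
contains a relatively dense set $\mathcal P$.

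\textbf{Step 3: the three claims.}

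\emph{Product.} Equip $X^m$ with the max metric. Any $\tau\in\mathcal P$ works simultaneously for every $(p_1,\dots,p_m)\in K$, which proves (1).

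\emph{Union.} For $p\in\bigcup K_i$, the point $p$ lies in some $K_i$, and $\tau\in\mathcal P$ is an $\varepsilon$-almost period for $K_i$. This proves (2).

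\emph{Intersection.} Here $\bigcap K_i\subseteq K_1$, and a subset of an equi-almost periodic set is trivially equi-almost periodic. The intersection is also compact and invariant, since each $K_i$ is invariant. This proves (3).

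\textbf{Main obstacle.} The essential difficulty is Step 2: intersecting relatively dense sets need not yield a relatively dense set. This is why we pass through precompactness (Bochner's criterion) rather than working with the almost periods directly.

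The delicate part of Step 1 is the direction "almost periodic $\Rightarrow$ precompact translates." It follows the classical $\varepsilon$-net argument: cover by the translates over $[0,l]$, using uniform continuity of $F$ in $t$ on compact intervals. That uniform continuity holds because $\pi$ is uniformly continuous on $[0,l]\times K$.

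For $\mathbb T=\mathbb S_+$ the argument is the same, with translates taken for $h\ge0$.
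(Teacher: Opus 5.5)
The paper does not prove this lemma; it only quotes it from the author's earlier work, so your argument has to stand on its own.

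\textbf{Two-sided case.} For $\mathbb T=\mathbb S$ your argument works. Equi-almost periodicity of $K_i$ is exactly Bohr almost periodicity of $F_i(t)=\pi(t,\cdot)|_{K_i}$ in the sup metric. Bochner's criterion on the whole line gives precompact translates of each $F_i$. The diagonal translates of $G=(F_1,\dots,F_m)$ are then precompact. Bochner in the reverse direction yields a relatively dense set of common $\varepsilon$-almost periods, and Step 3 is then immediate.

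\textbf{The gap: the one-sided case.} Your last sentence is wrong: for $\mathbb T=\mathbb S_+$ the argument is \emph{not} the same. It fails precisely in the direction Step 2 needs, namely precompact translates $\Rightarrow$ almost periodic. On the half-line, precompactness of $\{f^h\mid h\ge 0\}$ in the uniform metric only gives asymptotic almost periodicity. For example, $f(t)=e^{-t}$ on $\mathbb R_+$ has precompact translates, but its $\varepsilon$-almost periods satisfy $\tau<-\ln(1-\varepsilon)$, which is not a relatively dense set. The failure point is that $\sup_{t\ge 0}\rho(f(t+h),f(t+h_j))<\varepsilon$ controls $\rho(f(s+h-h_j),f(s))$ only for $s\ge h_j$, not for all $s\ge 0$. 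So the ``standard fact'' you invoke is false in the one-sided setting. You must replace it with something that uses the structure of $G$.

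\textbf{The repair.} Use invariance once more. For $h\ge h_j$, since $\pi(t+h_j,K_i)=K_i$, substituting $q=\pi(t+h_j,p)$ gives
\[
\sup_{t\ge 0}\sup_{p\in K_i}\rho(\pi(t+h,p),\pi(t+h_j,p))=\sup_{q\in K_i}\rho(\pi(h-h_j,q),q)=\sup_{t\ge 0}\sup_{p\in K_i}\rho(\pi(t+h-h_j,p),\pi(t,p)).
\]
Hence the uniform distance between two translates of $G$ depends only on the difference of the shifts.

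Now take an $\varepsilon$-net $G^{h_1},\dots,G^{h_k}$ of $\{G^h\mid h\ge 0\}$ and put $H:=\max_j h_j$. For any $a\ge 0$, choose $j$ with $G^{a+H}$ $\varepsilon$-close to $G^{h_j}$. Then $\tau:=a+H-h_j\in[a,a+H]$ is a common $\varepsilon$-almost period of all the $K_i$, so these periods are relatively dense with inclusion length $H$.

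Alternatively, you can reduce to the two-sided case. Equi-almost periodicity forces $\pi(s,\cdot)|_{K_i}$ to be injective: if $\pi(s,p_1)=\pi(s,p_2)$ with $p_1\neq p_2$, take $\varepsilon<\rho(p_1,p_2)/2$ and $\tau\ge s$ in $\mathcal P(\varepsilon,K_i)$. Then $\pi(\tau,p_1)=\pi(\tau,p_2)$, which forces $\rho(p_1,p_2)<2\varepsilon$, a contradiction. So $\pi(s,\cdot)$ is a homeomorphism of the compact set $K_i$, and the flow on $K_i$ extends to a two-sided one.

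With either patch, Steps 2 and 3 go through as you wrote them.
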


Let $(X,\mathbb T,\pi)$ (respectively, $(Y,\mathbb T,\sigma)$) be
a dynamical system on the space $X$ (respectively, on the space
$Y$) and $x\in X$ ($y\in Y$). Denote by $\mathfrak N_{x}$
(respectively, $\mathfrak M_{x}$ or $\mathfrak L^{+\infty}_{x}$)
the family of all sequence $\{t_n\}\subset \mathfrak T$ such that
$\pi(t_n,x)\to x$ (respectively, the sequence $\{\pi(t_n,x)\}$
converges or $\{\pi(t_n,x)\}$ converges and $t_n\to +\infty$) as
$n\to \infty$.

Let $(Y,\mathbb T,\sigma)$ be a dynamical system on the space $Y$
and $y\in Y$.

\begin{definition}\label{defB1} \cite[Ch.I]{Che_2024} A point $x\in X$
is said to be:
\begin{enumerate}
\item comparable by the character of recurrence with the point $y$
if $\mathfrak N_{y}\subseteq \mathfrak N_{x}$; \item strongly
comparable by character of recurrence with the point $y$ if
$\mathfrak M_{y}\subseteq \mathfrak M_{x}$; \item remotely
comparable by character of recurrence with the point $y$ if
$\mathfrak L_{y}^{+\infty}\subseteq \mathfrak L_{x}^{+\infty}$.
\end{enumerate}
\end{definition}

\begin{theorem}\label{thRAP4.01}\cite[Ch.I]{Che_2024} The following statements are
equivalents:
\begin{enumerate}
\item a point $x\in X$ is strongly comparable by the character of
recurrence with the point $y\in Y$; \item there exists a
continuous mapping $h:H(y)\to H(x)$ satisfying the conditions
\begin{equation}\label{eqH1}
h(y)=x\ \ \mbox{and}\ \ \ h(\sigma(t,q))=\pi(t,h(q)) \nonumber
\end{equation}
for all $(t,q)\in \mathbb T\times H(y)$.
\end{enumerate}
\end{theorem}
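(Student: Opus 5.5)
The statement just before this point is Theorem \ref{thRAP4.01}. For it I would build the map $h$ from the sequences in $\mathfrak M_y$, and conversely read off sequence inclusion from continuity of $h$. Here $H(y):=\overline{\{\sigma(t,y)\mid t\in\mathbb T\}}$ and $H(x)$ is defined in the same way.

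\emph{(2)$\Rightarrow$(1).} Let $\{t_n\}\in\mathfrak M_y$, so $\sigma(t_n,y)\to q$ for some $q\in H(y)$. Continuity of $h$ together with the equivariance $h(\sigma(t_n,y))=\pi(t_n,h(y))=\pi(t_n,x)$ gives $\pi(t_n,x)\to h(q)$. Hence $\{t_n\}\in\mathfrak M_x$.

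\emph{(1)$\Rightarrow$(2).} I would proceed in four steps.

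First, define $h$ on $H(y)$. For $q\in H(y)$ choose $t_n$ with $\sigma(t_n,y)\to q$. By (1) the sequence $\pi(t_n,x)$ converges, and I set $h(q):=\lim\pi(t_n,x)$.

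Second, check that $h$ is well defined. Suppose two sequences $\{t_n\}$ and $\{s_n\}$ both give $\sigma(\cdot,y)\to q$. Interlace them into $t_1,s_1,t_2,s_2,\dots$. This interlaced sequence still lies in $\mathfrak M_y$, hence in $\mathfrak M_x$. Its $\pi$-images therefore converge, so the two limits coincide. Taking $t_n\equiv 0$ gives $h(y)=x$.

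Third, verify equivariance. If $\sigma(t_n,y)\to q$, then $\sigma(t_n+t,y)\to\sigma(t,q)$, and so $\pi(t_n+t,x)=\pi(t,\pi(t_n,x))\to\pi(t,h(q))$. This gives $h(\sigma(t,q))=\pi(t,h(q))$.

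Fourth, prove continuity, which I expect to be the main obstacle. Let $q_k\to q$ in $H(y)$. Use a diagonal argument: pick $t_k$ with $\rho(\sigma(t_k,y),q_k)<1/k$ and $\rho(\pi(t_k,x),h(q_k))<1/k$. This is possible by the definition of $h$. Then $\sigma(t_k,y)\to q$, so $\{t_k\}\in\mathfrak M_y\subseteq\mathfrak M_x$ and $\pi(t_k,x)\to h(q)$, by the well-definedness already proved. It follows that $h(q_k)\to h(q)$.

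The same interlacing and diagonal argument shows that the image of $h$ lies in $H(x)$. This completes (1)$\Rightarrow$(2).
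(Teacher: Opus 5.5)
Your proof is correct and follows the standard Shcherbakov-type argument behind the cited result; the paper gives no proof of its own, only the reference to Cheban's monograph. You define $h(q)$ as the limit of $\pi(t_n,x)$ along any sequence with $\sigma(t_n,y)\to q$, get well-definedness by interlacing, equivariance by shifting the sequence, and continuity by a diagonal choice, and your closing remark is more than needed: $h(q)\in H(x)$ is immediate, since $h(q)$ is by definition a limit of points $\pi(t_n,x)$ of the orbit of $x$.
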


\begin{theorem}\label{thRAP4.0} \cite[Ch.I]{Che_2024} Let $y\in Y$ be asymptotically stationary
(respectively, asymptotically $\tau$-periodic or asymptotically
almost periodic) point. If the point $x\in X$ is remotely
comparable by the character of recurrence with the point $y$, then
the point $x$ is also asymptotically stationary (respectively,
asymptotically $\tau$-periodic or asymptotically almost periodic).
\end{theorem}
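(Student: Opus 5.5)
The plan is to prove Theorem \ref{thRAP4.0} by treating the three cases in parallel. In each case we transfer the asymptotic behaviour of $y$ to $x$ through the inclusion $\mathfrak L_{y}^{+\infty}\subseteq \mathfrak L_{x}^{+\infty}$.

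\emph{Step 1: a map $h$ on the $\omega$-limit sets.} Let $p$ be the stationary (respectively, $\tau$-periodic or almost periodic) point with $\rho(\sigma(t,y),\sigma(t,p))\to 0$. Define $h:\omega_{y}\to \omega_{x}$ as follows. For $q\in\omega_y$ choose $t_n\to+\infty$ with $\sigma(t_n,y)\to q$. Then $\{t_n\}\in\mathfrak L^{+\infty}_{y}\subseteq \mathfrak L^{+\infty}_{x}$, so $\pi(t_n,x)$ converges, and we set $h(q):=\lim \pi(t_n,x)$.

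This is well defined by interlacing. If two sequences $\{t_n\},\{t_n'\}$ give the same $q$, the merged sequence $t_1,t_1',t_2,t_2',\dots$ still lies in $\mathfrak L_{y}^{+\infty}$. Hence $\pi$ along it converges, so the two limits coincide. The same interlacing argument, applied to a diagonal sequence, shows that $h$ is continuous. It is also equivariant, $h(\sigma(t,q))=\pi(t,h(q))$, because $\{t_n+t\}$ is again in $\mathfrak L^{+\infty}_y$.

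A key point is that $\pi(t_n,x)$ converges whenever $\sigma(t_n,y)$ converges along $t_n\to\infty$. This gives a form of Lagrange stability "relative to $y$". Since $y$ is asymptotically almost periodic, $\Sigma_{y}^{+}$ is precompact. Every sequence $t_n\to\infty$ then has a subsequence with $\sigma(t_n,y)$ convergent, and so with $\pi(t_n,x)$ convergent. Hence $x$ is positively Lagrange stable and $\omega_x=h(\omega_y)$.

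\emph{Step 2: transfer the structure.} We have $\omega_y=\omega_p=H(p)$, which is a compact minimal set. It consists of stationary points (respectively, $\tau$-periodic points, or is equi-almost periodic). In the first two cases, $h$ maps stationary points to stationary points and $\tau$-periodic points to $\tau$-periodic points. Theorem \ref{th1.3.9} and Corollary \ref{corSAP1} then give only remote periodicity of $x$, which is weaker than what we need, so we argue directly.

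For the $\tau$-periodic case we use Theorem \ref{thAAP1}. The sequence $\sigma(k\tau,y)$ converges (to the $\tau$-periodic point determined by $p$) and $k\tau\to\infty$. Hence $\{k\tau\}\in\mathfrak L_y^{+\infty}\subseteq\mathfrak L_x^{+\infty}$, so $\pi(k\tau,x)$ converges and $x$ is asymptotically $\tau$-periodic. The stationary case follows by applying this to every $\tau>0$. The limit stationary point is independent of $\tau$, again by interlacing: $\pi(t_n,x)$ converges along any $t_n\to\infty$, because $\sigma(t_n,y)\to p$ for every such sequence.

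\emph{Step 3: the almost periodic case, the main obstacle.} Here we must produce an almost periodic $\tilde p\in X$ with $\rho(\pi(t,x),\pi(t,\tilde p))\to0$. The approach is to look at the pair $(y,x)$ in the product system. First choose $t_n\to\infty$ with $\sigma(t_n,y)\to p$; this is possible since $p\in\omega_y$ and is recurrent. Set $\tilde p:=h(p)=\lim\pi(t_n,x)$. Then $\tilde p$ is almost periodic, being the continuous equivariant image of the almost periodic point $p$ on the compact minimal set $H(p)$; almost periodicity follows from uniform continuity of $h$ on $H(p)$.

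It remains to show that $d(t):=\rho(\pi(t,x),\pi(t,\tilde p))\to0$. Suppose not: there are $s_k\to\infty$ and $\varepsilon>0$ with $d(s_k)\ge\varepsilon$. Passing to subsequences, $\sigma(s_k,y)\to q$ and $\sigma(s_k,p)\to q$, where the second limit holds because $\rho(\sigma(s_k,y),\sigma(s_k,p))\to 0$. Also $\pi(s_k,x)\to h(q)$.

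The difficulty is to show that $\pi(s_k,\tilde p)\to h(q)$ as well. For this I would use $\tilde p=\lim_n \pi(t_n,x)$ and a diagonal choice $r_k:=t_{n_k}+s_k$. With $n_k$ large enough, $\sigma(r_k,y)$ is close to $\sigma(s_k,p)$, using uniform continuity of $\sigma$ on the compact closure of $\Sigma^{+}_{y}$ over the bounded time $s_k$. Likewise $\pi(r_k,x)$ is close to $\pi(s_k,\tilde p)$. Then $\sigma(r_k,y)\to q$ with $r_k\to\infty$, so $\pi(r_k,x)\to h(q)$, and therefore $\pi(s_k,\tilde p)\to h(q)$. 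This gives $d(s_k)\to0$, a contradiction. The delicate point is controlling $\pi(s_k,\cdot)$ near $\tilde p$ uniformly in $k$. I would handle it by continuity of $\pi$ at each fixed $s_k$, choosing $n_k$ after $s_k$ is fixed.
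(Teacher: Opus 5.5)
\textbf{Verdict: your proof is correct and self-contained.} The paper gives no proof of this statement to compare against; it quotes the result from the author's earlier monograph (Ch.~I) and uses it as a black box.

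The mechanism you use is sound. You build $h:\omega_y\to\omega_x$ from $\mathfrak L^{+\infty}_y\subseteq\mathfrak L^{+\infty}_x$, check well-definedness by interlacing, continuity by a diagonal sequence, and equivariance by time shifts. From this you get positive Lagrange stability of $x$ and $\omega_x=h(\omega_y)$. The three case arguments that follow are valid.

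\textbf{Step 3 is simpler than you make it.} The ``delicate point'' is not delicate. You have already shown that $h$ is continuous and equivariant on $\omega_y=H(p)$, so directly
\[
\pi(s_k,\tilde p)=\pi(s_k,h(p))=h(\sigma(s_k,p))\to h(q).
\]
Your diagonal sequence $r_k=t_{n_k}+s_k$ is valid: pointwise continuity of $\sigma(s_k,\cdot)$ and $\pi(s_k,\cdot)$ for each fixed $k$ suffices, because $n_k$ is chosen after $s_k$. It is just redundant.

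\textbf{One argument covers all three cases.} By equivariance, $\tilde p=h(p)$ is stationary (resp.\ $\tau$-periodic) whenever $p$ is; for instance $\pi(\tau,\tilde p)=h(\sigma(\tau,p))=h(p)=\tilde p$. The Step~3 proof that $\rho(\pi(t,x),\pi(t,\tilde p))\to 0$ uses only three facts: precompactness of $\Sigma^+_y$, $p\in\omega_y$, and $\sigma(t,p)\in\omega_y$ for $t\in\mathbb T$. All three hold in the stationary and periodic cases too. So the separate appeal to Theorem~\ref{thAAP1} and the extra interlacing in the stationary case can be dropped.

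\textbf{Two facts about $p$ you use silently should be stated.} Relative density of $\mathcal P(\varepsilon,p)$ (with inclusion length $l$) puts every $\sigma(t,p)$ within $\varepsilon$ of the compact set $\sigma([0,l],p)$. Hence $\Sigma^+_p$ is totally bounded, and so precompact because $Y$ is complete. The same relative density gives $p\in\omega_p$. These are what justify precompactness of $\Sigma^+_y$ and the choice $p\in\omega_y=\omega_p$ at the start of Step~3.
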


\begin{theorem}\label{thRAP4.1} \cite[Ch.I]{Che_2024} The
following statements hold:
\begin{enumerate}
\item Let $y\in Y$ be stationary (respectively, $\tau$-periodic or
almost recurrent) point. If the point $x\in X$ is comparable by
the character of recurrence with the point $y$, then the point $x$
is also stationary (respectively, $\tau$-periodic or almost
recurrent); \item Let $y\in Y$ be almost periodic (respectively,
recurrent) point. If the point $x\in X$ is strongly comparable by
the character of recurrence with the point $y$, then the point $x$
is also almost periodic (respectively, recurrent).
\end{enumerate}
\end{theorem}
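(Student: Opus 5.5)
Theorem \ref{thRAP4.1} has two parts, and I will prove each by working directly with the definitions of $\mathfrak N$ and $\mathfrak M$. Part (2) will also use the homomorphism $h$ from Theorem \ref{thRAP4.01}.

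For part (1), suppose first that $y$ is $\tau$-periodic. Then the sequence $t_n:=n\tau$ is in $\mathfrak N_y$. More generally, the constant sequence $t_n\equiv\tau$ lies in $\mathfrak N_y$, since $\sigma(\tau,y)=y$. By assumption $\mathfrak N_y\subseteq\mathfrak N_x$, so $\pi(\tau,x)\to x$ along this constant sequence, which gives $\pi(\tau,x)=x$. The stationary case is the same argument applied to every $\tau\in\mathbb T$.

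Now suppose $y$ is almost recurrent. I will argue by contradiction. If $x$ is not almost recurrent, there are $\varepsilon>0$ and intervals $[a_n,a_n+n]$ that contain no $\varepsilon$-return time of $x$. Since $y$ is almost recurrent, I can pick $\tau_n$ with $\rho(\sigma(\tau_n,y),y)<1/n$ lying in a window of length $l(1/n)$. The difficulty is that this window length depends on $n$, so I cannot place $\tau_n$ inside the bad intervals just like that. I will use an equivalent reformulation instead: $x$ is almost recurrent iff for every $\varepsilon>0$ there is $\delta>0$ such that $\mathcal T(\delta,y)\subseteq\mathcal T(\varepsilon,x)$. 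This inclusion follows from $\mathfrak N_y\subseteq\mathfrak N_x$. If it failed for some $\varepsilon$, I could choose $t_n\in\mathcal T(1/n,y)\setminus\mathcal T(\varepsilon,x)$. Then $\{t_n\}\in\mathfrak N_y$ but $\{t_n\}\notin\mathfrak N_x$, a contradiction. Relative density of $\mathcal T(\delta,y)$ then passes to $\mathcal T(\varepsilon,x)$.

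For part (2), strong comparability lets me invoke Theorem \ref{thRAP4.01}, which gives a continuous $h:H(y)\to H(x)$ with $h(y)=x$ that commutes with the flows. If $y$ is recurrent, then $H(y)$ is compact and minimal. Its image $h(H(y))$ is therefore compact and invariant, and it contains $\pi(t,x)$ for all $t$, so it equals $H(x)$. This shows $x$ is Lagrange stable. Almost recurrence of $x$ follows from part (1), because strong comparability implies comparability: $\mathfrak N_y\subseteq\mathfrak M_y\subseteq\mathfrak M_x$, and the limit is $h(y)=x$. Hence $x$ is recurrent.

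If $y$ is almost periodic (and Lagrange stable, so $H(y)$ is compact), then $h$ is uniformly continuous on $H(y)$. Given $\varepsilon$, I take $\delta$ from uniform continuity. For $\tau\in\mathcal P(\delta,y)$ I then have, for all $t$,
\[
\rho(\pi(t+\tau,x),\pi(t,x))=\rho(h(\sigma(t+\tau,y)),h(\sigma(t,y)))<\varepsilon .
\]
So $\mathcal P(\delta,y)\subseteq\mathcal P(\varepsilon,x)$, which is relatively dense.

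The main obstacle is compactness of $H(y)$ in the almost periodic case when $X$ is complete but $y$ is not assumed Lagrange stable. If that is needed, I will instead use uniform continuity of $h$ on the orbit closure, which I will derive from the sequence characterization together with completeness.
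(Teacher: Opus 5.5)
The paper gives no proof of this theorem; it only quotes it from Cheban's monograph. Judged on its own, your part (1) and the recurrent case of part (2) are correct, and this is the standard Shcherbakov-type argument. In part (1) you choose $t_n\in\mathcal T(1/n,y)\setminus\mathcal T(\varepsilon,x)$ to force $\mathcal T(\delta,y)\subseteq\mathcal T(\varepsilon,x)$. In part (2) you use the homomorphism $h$ of Theorem \ref{thRAP4.01} to get $H(x)=h(H(y))$ compact and $\mathfrak N_y\subseteq\mathfrak N_x$. Two cosmetic points: your ``iff'' reformulation is only a sufficient condition, and minimality of $H(y)$ is never used.

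The gap is in the almost periodic case. You add the hypothesis that $y$ is Lagrange stable and leave its removal open, and your fallback does not work as stated. A sequential proof of uniform continuity of $h$ on the orbit would start from $s_n,r_n$ with $\rho(\sigma(s_n,y),\sigma(r_n,y))\to 0$ but $\rho(\pi(s_n,x),\pi(r_n,x))\ge\varepsilon$. To feed the interleaved sequence $s_1,r_1,s_2,r_2,\dots$ into $\mathfrak M_y\subseteq\mathfrak M_x$, you need a convergent subsequence of $\{\sigma(s_n,y)\}$, which is precompactness of the orbit again. Completeness of $Y$ alone does not supply that.

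The missing ingredient is that almost periodicity itself forces total boundedness. Let $l$ be an inclusion length for $\mathcal P(\varepsilon,y)$. For $s\in\mathbb T$ with $s\ge l$ (any $s$ if $\mathbb T=\mathbb S$), choose $\tau\in\mathcal P(\varepsilon,y)\cap[s-l,s]$. Applying the definition at $t:=s-\tau\in[0,l]$ gives $\rho(\sigma(s,y),\sigma(s-\tau,y))<\varepsilon$, and points with $s\in[0,l]$ lie on the arc anyway. So the whole trajectory of $y$ lies in the $\varepsilon$-neighbourhood of the compact arc $\sigma([0,l],y)$ and is totally bounded. Since $Y$ is complete, $H(y)$ is compact. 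Then $h$ is uniformly continuous on $H(y)$, and your inclusion $\mathcal P(\delta,y)\subseteq\mathcal P(\varepsilon,x)$ goes through with no extra hypothesis.
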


\begin{theorem}\label{thRAP4} \cite{Che_2024.1} Let $y\in Y$ be Lagrange stable and remotely stationary
(respectively, remotely $\tau$-periodic or remotely almost
periodic) point. If the point $x\in X$ is remotely comparable by
the character of recurrence with the point $y$, then the point $x$
is also remotely stationary (respectively, remotely
$\tau$-periodic or remotely almost periodic).
\end{theorem}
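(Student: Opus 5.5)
The natural route goes through the $\omega$-limit set. Since $y$ is Lagrange stable and $x$ is remotely comparable with $y$ ($\mathfrak L^{+\infty}_y\subseteq \mathfrak L^{+\infty}_x$), I would first show that $x$ is positively Lagrange stable. Take any sequence $t_n\to+\infty$. By Lagrange stability of $y$ we can pass to a subsequence along which $\sigma(t_n,y)$ converges. That subsequence lies in $\mathfrak L^{+\infty}_y\subseteq\mathfrak L^{+\infty}_x$, so $\pi(t_n,x)$ also converges. Bounded sequences $t_n$ are handled by the compactness of $\pi([0,T],x)$. Together these give precompactness of $\Sigma^+_x$.

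Next I would build a continuous map $h:\omega_y\to\omega_x$ by setting $h(q):=\lim \pi(t_n,x)$ whenever $\sigma(t_n,y)\to q$ and $t_n\to+\infty$.

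1. \emph{Well-definedness.} Given two such sequences $\{t_n\},\{t'_n\}$ for the same $q$, interlace them. The interlaced sequence still lies in $\mathfrak L^{+\infty}_y$, hence in $\mathfrak L^{+\infty}_x$, so $\{\pi(\cdot,x)\}$ converges along it and the two limits coincide.
2. \emph{Surjectivity.} Every point of $\omega_x$ is attained: given $\pi(t_n,x)\to p$, extract a subsequence along which $\sigma(t_n,y)$ converges, which is possible by compactness.
3. \emph{Equivariance.} $h(\sigma(t,q))=\pi(t,h(q))$ follows by using the sequence $t_n+t$.
4. \emph{Continuity.} Use a diagonal argument. If $q_k\to q$, choose $t^{(k)}$ large with $\sigma(t^{(k)},y)$ near $q_k$ and $\pi(t^{(k)},x)$ near $h(q_k)$. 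Then $\sigma(t^{(k)},y)\to q$, so $\pi(t^{(k)},x)\to h(q)$, and hence $h(q_k)\to h(q)$.

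The main obstacle is this continuity step, where the precise choice of the diagonal times matters.

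With $h$ in hand the three cases follow.

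\emph{Remotely $\tau$-periodic or remotely stationary.} By Theorem \ref{th1.3.9} (respectively Corollary \ref{corSAP1}), every $q\in\omega_y$ is $\tau$-periodic (respectively stationary). For $p=h(q)\in\omega_x$, equivariance gives $\pi(\tau,p)=h(\sigma(\tau,q))=h(q)=p$, and applying Theorem \ref{th1.3.9} again gives the conclusion for $x$.

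\emph{Remotely almost periodic.} By Lemma \ref{lRAP1}, $\omega_y$ is compact and equi-almost periodic. Since $h$ is uniformly continuous on the compact set $\omega_y$, for $\varepsilon>0$ there is $\delta>0$ with $\rho(h(q_1),h(q_2))<\varepsilon$ whenever $\rho(q_1,q_2)<\delta$. Take $\tau\in\mathcal P(\delta,\omega_y)$. For $p=h(q)$,
$$\rho(\pi(t+\tau,p),\pi(t,p))=\rho\bigl(h(\sigma(t+\tau,q)),h(\sigma(t,q))\bigr)<\varepsilon .$$
So $\mathcal P(\delta,\omega_y)\subseteq\mathcal P(\varepsilon,\omega_x)$, which makes $\omega_x$ equi-almost periodic, and Lemma \ref{lRAP1} yields that $x$ is remotely almost periodic.

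In the two-sided setting, the analogous argument with $\Delta_x$ and Theorem \ref{thTSR1} would apply, but here the statement concerns the positive semi-trajectory.
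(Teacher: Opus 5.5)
Your proof is correct. The map $h:\omega_y\to\omega_x$ is well defined, surjective, equivariant and continuous exactly as you describe; in the diagonal step, taking $t^{(k)}\ge k$ with both approximations within $1/k$ puts $\{t^{(k)}\}$ in $\mathfrak L^{+\infty}_y$. You also rightly establish positive Lagrange stability of $x$ first, since the statement does not assume it and it is needed to apply Theorem~\ref{th1.3.9}, Corollary~\ref{corSAP1} and Lemma~\ref{lRAP1} to $x$.

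The paper gives no proof of this statement; it is quoted from \cite{Che_2024.1}. Your route is the same mechanism the paper uses, with the map $\gamma=(\nu,Id_{Y})$, in the proof of Theorem~\ref{thIC2}: a continuous equivariant map on the $\omega$-limit set carries periodicity, and via uniform continuity equi-almost periodicity, from one system to the other.
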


\subsection{Remotely Almost Periodic Functions}\label{Sec2.2}

Let $X$ be a complete metric space. Denote by $C(\mathbb T,X)$ the
space of all continuous functions $\varphi :\mathbb T\to X$
equipped with the distance
\begin{equation*}\label{eqD1}
d(\varphi,\psi):=\sup\limits_{L>0}\min\{\max\limits_{|t|\le L,\
t\in \mathbb T}\rho(\varphi(t),\psi(t)),L^{-1}\}.
\end{equation*}
The space $(C(\mathbb T,X),d)$ is a complete metric space (see,
for example, \cite[ChI]{Che_2020}).

\begin{remark}\label{remD1} \rm
1. The distance $d$ generates on $C(\mathbb R,X)$ the compact-open
topology.

2. The following statements are equivalent:
\begin{enumerate}
\item $d(\varphi_n,\varphi)\to 0$ as $ n\to \infty$; \item
$\lim\limits_{n\to \infty}\max\limits_{|t|\le L,\ t\in \mathbb
T}\rho(\varphi_n(t),\varphi(t))=0$ for each $L>0$; \item there
exists a sequence $l_n\to +\infty$ such that
$$
\lim\limits_{n\to \infty}\max\limits_{|t|\le l_n,\ t\in \mathbb
T}\rho(\varphi_n(t),\varphi(t))=0.
$$
\end{enumerate}
\end{remark}

Let $h\in \mathbb T$, $\varphi \in \mathbb C(\mathbb T,X)$ and
$\varphi^{h}$ be the $h$-translation, i.e.,
$\varphi^{h}(t):=\varphi(t+h)$ for all $t\in \mathbb T$. Denote by
$\sigma_{h}$ the mapping from $C(\mathbb T,X)$ into itself defined
by equality $\sigma_{h}\varphi :=\varphi^{h}$ for every $\varphi \in
C(\mathbb T,X)$. Note that $\sigma_{0}=Id_{C(\mathbb T,X)}$ and
$\sigma_{h_1}\sigma_{h_2}=\sigma_{h_1+h_2}$ for all $h_1,h_2\in
\mathbb T$.

\begin{lemma}\label{lAPF1}\cite[Ch.I]{Che_2020} The mapping $\sigma :\mathbb T\times C(\mathbb T,X)\to C(\mathbb
T,X)$ defined by $\sigma(h,\varphi)=\sigma_{h}\varphi$ for all
$(h,\varphi)\in \mathbb T\times C(\mathbb T,\mathfrak B)$ is
continuous.
\end{lemma}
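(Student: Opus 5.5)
To prove Lemma \ref{lAPF1} I will check continuity of $\sigma$ at an arbitrary point $(h_0,\varphi_0)\in \mathbb T\times C(\mathbb T,X)$ using sequences. Both $\mathbb T$ and $(C(\mathbb T,X),d)$ are metric spaces, so sequential continuity suffices. Take $h_n\to h_0$ in $\mathbb T$ and $\varphi_n\to\varphi_0$ in $d$. By Remark \ref{remD1}, the goal $d(\varphi_n^{h_n},\varphi_0^{h_0})\to 0$ is equivalent to
$$\max_{|t|\le L,\ t\in\mathbb T}\rho(\varphi_n(t+h_n),\varphi_0(t+h_0))\to 0 \quad\mbox{for every } L>0,$$
and this is what I will verify.

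Fix $L>0$. Since $h_n\to h_0$, there is $M>0$ with $|h_n|\le M$ for all $n$ and $|h_0|\le M$. Then for $|t|\le L$ with $t\in\mathbb T$, the points $t+h_n$ and $t+h_0$ lie in $\mathbb T$ (when $\mathbb T=\mathbb S_+$ this holds because $h_n,h_0\ge 0$) and in the compact set $K:=\{s\in\mathbb T:\ |s|\le L+M\}$. I split the distance with the triangle inequality:
$$\rho(\varphi_n(t+h_n),\varphi_0(t+h_0))\le \rho(\varphi_n(t+h_n),\varphi_0(t+h_n))+\rho(\varphi_0(t+h_n),\varphi_0(t+h_0)).$$
The first term is at most $\max_{s\in K}\rho(\varphi_n(s),\varphi_0(s))$, which tends to $0$ by Remark \ref{remD1} applied with $L+M$ in place of $L$. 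For the second term, $\varphi_0$ is uniformly continuous on the compact set $K$. Given $\varepsilon>0$, choose $\delta>0$ such that $\rho(\varphi_0(s),\varphi_0(s'))<\varepsilon$ whenever $s,s'\in K$ and $|s-s'|<\delta$. For $n$ large we have $|h_n-h_0|<\delta$, so the second term is less than $\varepsilon$ uniformly in $|t|\le L$. Together these give the required convergence.

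The only delicate point is bookkeeping: all shifted arguments must stay inside one fixed compact window and inside $\mathbb T$, so that both uniform convergence on compacts and uniform continuity can be applied. Choosing the uniform bound $M$ on the shifts handles this, and the rest is routine. (The statement writes $C(\mathbb T,\mathfrak B)$ in one place, but the argument works for any complete metric space $X$.)
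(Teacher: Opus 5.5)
Your proof is correct. You split $\rho(\varphi_n(t+h_n),\varphi_0(t+h_0))$ with the triangle inequality, bound the first term by uniform convergence on the enlarged window $\{s\in\mathbb T:\ |s|\le L+M\}$ and the second by uniform continuity of $\varphi_0$ on that window, and then translate back to convergence in $d$ via Remark~\ref{remD1}. The paper gives no proof of this lemma and only quotes it from \cite[Ch.I]{Che_2020}; your argument is the standard proof of continuity of Bebutov's shift system.
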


\begin{coro}\label{corAPF1} The triplet $(C(\mathbb T,X),\mathbb
T,\sigma)$ is a dynamical system (shift dynamical system or
Bebutov's dynamical system).
\end{coro}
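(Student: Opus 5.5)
The corollary states that $(C(\mathbb T,X),\mathbb T,\sigma)$ is a dynamical system. By definition, a dynamical system on a metric space is a triple $(X,\mathbb T,\pi)$ in which $\pi:\mathbb T\times X\to X$ satisfies three axioms: the identity axiom $\pi(0,x)=x$, the group (semigroup) property $\pi(t_1+t_2,x)=\pi(t_1,\pi(t_2,x))$, and continuity of $\pi$. The plan is to check these three conditions for $\sigma$ in turn, using only what precedes the statement.

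The first two axioms are already recorded just before Lemma \ref{lAPF1}. We have $\sigma_0=Id_{C(\mathbb T,X)}$ because $\varphi^0(t)=\varphi(t)$. We also have $\sigma_{h_1}\sigma_{h_2}=\sigma_{h_1+h_2}$ for all $h_1,h_2\in\mathbb T$. To confirm the latter pointwise, compute
$$(\sigma_{h_1}(\sigma_{h_2}\varphi))(t)=\varphi^{h_2}(t+h_1)=\varphi(t+h_1+h_2)=\varphi^{h_1+h_2}(t).$$
This translates directly into $\sigma(0,\varphi)=\varphi$ and $\sigma(h_1+h_2,\varphi)=\sigma(h_1,\sigma(h_2,\varphi))$.

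Two routine well-definedness points also need noting. First, $\varphi^h$ is again continuous on $\mathbb T$, since it is a composition of continuous maps. Second, when $\mathbb T=\mathbb R_{+}$ the shift requires $h\ge 0$ so that $t+h\in\mathbb T$; this holds because $h\in\mathbb T$.

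Continuity of $\sigma$ jointly in $(h,\varphi)$ is exactly the content of Lemma \ref{lAPF1}, which I invoke directly. This is the only step with real content. If it had to be reproved, the argument would go as follows. Take $h_n\to h$ and $\varphi_n\to\varphi$, and fix $L>0$. On the compact set $\{t\in\mathbb T : |t|\le L+\sup_n|h_n|\}$, the convergence $\varphi_n\to\varphi$ is uniform, and $\varphi$ is uniformly continuous there. Splitting
$$\rho(\varphi_n(t+h_n),\varphi(t+h))\le\rho(\varphi_n(t+h_n),\varphi(t+h_n))+\rho(\varphi(t+h_n),\varphi(t+h))$$
gives uniform convergence on $|t|\le L$. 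Remark \ref{remD1} then yields $d(\sigma(h_n,\varphi_n),\sigma(h,\varphi))\to 0$. With all three axioms verified, the triple is a dynamical system.
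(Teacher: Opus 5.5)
Your proposal is correct and matches the paper's (implicit) argument: the corollary follows from the identities $\sigma_{0}=Id_{C(\mathbb T,X)}$ and $\sigma_{h_1}\sigma_{h_2}=\sigma_{h_1+h_2}$, recorded just before Lemma \ref{lAPF1}, together with the joint continuity asserted by that lemma. Your optional sketch reproving continuity on the compact set $\{|t|\le L+\sup_n|h_n|\}$, which contains both $t+h_n$ and $t+h$, is also sound.
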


Let $\mathfrak B$ be a  Banach space over the field $P$
($P=\mathbb R$ or $\mathbb C$) with the norm $|\cdot|$,
$\rho(u,v):=|u-v|$ ($u,v\in \mathfrak B$) and $\tau\in \mathbb T$
be a positive number. Denote by $C_{0}(\mathbb T,\mathfrak
B)):=\{\varphi \in C(\mathbb T,\mathfrak B)$ such that
$\lim\limits_{t\to +\infty}|\varphi(t)|=0\}$ and $C_{\tau}(\mathbb
T,\mathfrak B):=\{\varphi \in C(\mathbb T,\mathfrak B)|\
\varphi(t+\tau)=\varphi(t)$ for all $t\in \mathbb T\}$.

\begin{definition}\label{defAPF1} Let $\tau \in \mathbb T$ and $\tau >0$. A function $\varphi \in C(\mathbb
T,X)$is said to be:
\begin{enumerate}
\item asymptotically $\tau$-periodic (respectively, asymptotically
stationary) if there exist $p\in C_{\tau}(\mathbb T,\mathfrak B)$
and $r\in C_{0}(\mathbb T,\mathfrak B)$ such that
$\varphi(t)=p(t)+r(t)$ for all $t\in \mathbb T$; \item remotely
$\tau$-periodic \cite{HPT_2008,Kal_2010} (respectively, remotely
stationary) if
\begin{equation}\label{eqAPF2}
\lim\limits_{t\to +\infty}\rho(\varphi(t+\tau),\varphi(t))=0
\end{equation}
(respectively, remotely $\tau$-periodic for every $\tau
>0$);
\item remotely almost periodic
\cite{Bas_2013,Bas_2015,BSS_2019,RS_1986,Sar_1984} if for every
$\varepsilon
>0$ there exists a relatively dense subset $\mathcal
P(\varepsilon,\varphi)$ such that for every $\tau \in \mathcal
P(\varepsilon,\varphi)$ we have a positive number
$L(\varepsilon,\varphi,\tau)$ so that
\begin{equation}\label{eqAPF2.1}
\rho(\varphi(t+\tau),\varphi(t))<\varepsilon \nonumber
\end{equation}
for all $t\ge L(\varepsilon,\varphi,\tau)$.
\end{enumerate}
\end{definition}

\begin{remark}\label{remAPF_02} The functions with the property
(\ref{eqAPF2}) in the work \cite{HPT_2008} (respectively, in the
work \cite{Kal_2010}) is called $S$-asymptotically $\tau$-periodic
(respectively, $\tau$-periodic at the infinity).
\end{remark}

\begin{remark}\label{remAPF2} Every remotely
$\tau$-periodic function is remotely almost periodic.
\end{remark}

\begin{remark}\label{remAPF1} Every asymptotically
$\tau$-periodic (respectively, asymptotically stationary) function
$\varphi \in C(\mathbb T,\mathfrak B)$ is remotely $\tau$-periodic
\cite{HPT_2008,Kal_2010} (respectively, remotely stationary).
\end{remark}

\begin{definition}\label{defAPF02} A function $\varphi \in C(\mathbb
T,X)$ is said to be Lagrange stable if the motion
$\sigma(t,\varphi)$ is so in the shift dynamical system
$(C(\mathbb T,X),\mathbb T,\sigma)$.
\end{definition}

\begin{lemma}\label{lAPF3.1} \cite{Che_2024.2} Let $\varphi \in C(\mathbb T,X)$. The
following statement are equivalent:
\begin{enumerate}
\item[(a)] the motion $\sigma(t,\varphi)$ generated by the
function $\varphi$ in the shift dynamical system $(C(\mathbb
T,X),\mathbb T,\sigma)$ is remotely almost periodic (respectively,
remotely $\tau$-periodic or remotely stationary); \item[(b)] the
function $\varphi$ is remotely almost periodic (respectively,
remotely $\tau$-periodic or remotely stationary).
\end{enumerate}
\end{lemma}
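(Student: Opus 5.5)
The plan is to prove the two directions of Lemma \ref{lAPF3.1} separately. In each direction I will translate between the metric $d$ on $C(\mathbb T,X)$ and the pointwise distance $\rho$ along the orbit, treating the three cases (remotely almost periodic, remotely $\tau$-periodic, remotely stationary) in parallel. Throughout, $\sigma(t+\tau,\varphi)=\varphi^{t+\tau}$ and $\sigma(t,\varphi)=\varphi^{t}$. So the motion is remotely almost periodic exactly when, for every $\varepsilon>0$, there is a relatively dense set of $\tau$ with $d(\varphi^{t+\tau},\varphi^{t})<\varepsilon$ for all $t\ge L$.

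\emph{(a)$\Rightarrow$(b).} Fix $\varepsilon\in(0,1)$ and take $\tau$ from the set $\mathcal P(\varepsilon,\varphi)$ given by the motion, with threshold $L$. Let $t\ge L$. By the definition of $d$, the inequality $d(\varphi^{t+\tau},\varphi^{t})<\varepsilon$ means that for every $l>0$ we have $\min\{\max_{|s|\le l}\rho(\varphi(s+t+\tau),\varphi(s+t)),l^{-1}\}<\varepsilon$. Choose $l=1$; since $1^{-1}=1>\varepsilon$, the minimum must be the max term. Evaluating at $s=0$ then gives $\rho(\varphi(t+\tau),\varphi(t))<\varepsilon$ for all $t\ge L$. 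Hence the same relatively dense set works for the function $\varphi$. The $\tau$-periodic and stationary cases are identical, with the fixed $\tau$ (respectively every $\tau$) and $\varepsilon\to0$.

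\emph{(b)$\Rightarrow$(a).} Fix $\varepsilon>0$ and put $l=\varepsilon^{-1}$. By the definition of $d$, it suffices to show that for suitable $\tau$ and all large $t$,
\begin{equation*}
\max_{|s|\le l,\ s\in\mathbb T}\rho(\varphi(s+t+\tau),\varphi(s+t))<\varepsilon .
\end{equation*}
Indeed, for $L'\le l$ the min is bounded by this max, and for $L'>l$ the min is at most $L'^{-1}<\varepsilon$. So the sup in $d$ is at most $\max\{\text{this max},\varepsilon\}$, and we take a slightly smaller $\varepsilon$ to get strict inequality. Now take $\tau\in\mathcal P(\varepsilon/2,\varphi)$ with threshold $L(\varepsilon/2,\varphi,\tau)$. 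For $t\ge L+l$ and $|s|\le l$ we have $s+t\ge L$, hence $\rho(\varphi(s+t+\tau),\varphi(s+t))<\varepsilon/2$. This gives $d(\varphi^{t+\tau},\varphi^{t})\le\varepsilon$ for all $t\ge L+l$. So $\mathcal P(\varepsilon/2,\varphi)$ serves for the motion with threshold $L+\varepsilon^{-1}$. For the $\tau$-periodic (respectively stationary) case, the same estimate with the fixed $\tau$ (respectively every $\tau$) gives $d(\varphi^{t+\tau},\varphi^{t})\to0$.

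The only point needing care is the bookkeeping in (b)$\Rightarrow$(a). Pointwise smallness for $t\ge L$ has to become uniform smallness on windows $[t-l,t+l]$, which costs the shift of the threshold by $l=\varepsilon^{-1}$. The $L^{-1}$ truncation in $d$ is exactly what makes a bounded window suffice. No compactness or Lagrange stability is needed for either direction.
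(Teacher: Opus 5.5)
Your proof is correct. The paper does not prove this lemma; it quotes it from \cite{Che_2024.2}, so there is no in-paper argument to compare with. Your direct verification from the explicit formula for $d$ is a complete, self-contained proof. In effect it is a quantitative version of Remark~\ref{remD1}. In (a)$\Rightarrow$(b) you use that $d(\varphi^{t+\tau},\varphi^{t})<\varepsilon<1$ forces $\max_{|s|\le 1}\rho(\varphi(t+\tau+s),\varphi(t+s))<\varepsilon$. In (b)$\Rightarrow$(a) you use that smallness on the single window $|s|\le\varepsilon^{-1}$ already gives $d\le\varepsilon$.

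Shifting the threshold from $L$ to $L+\varepsilon^{-1}$ is what lets you avoid any uniform continuity or Lagrange stability assumption. This is consistent with the paper listing Lagrange stability as a separate hypothesis in Theorem~\ref{th1RAP}. The same window argument also covers the two-sided convention $|t|\ge L$ used in the Introduction, since $|t|\ge L+l$ and $|s|\le l$ give $|t+s|\ge L$.
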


\begin{definition}\label{defAAP1} A function $\varphi \in C(\mathbb T,\mathfrak
B)$ is said to be asymptotically stationary (respectively,
asymptotically $\tau$-periodic or asymptotically almost periodic)
if there exist functions $p,r\in C(\mathbb T,\mathfrak B)$ such
that
\begin{enumerate}
\item $\varphi(t)=p(t)+r(t)$ for all $t\in \mathbb T$; \item $r\in
C_{0}(\mathbb T,\mathfrak B)$ and $p$ is stationary (respectively,
$\tau$-periodic or almost periodic).
\end{enumerate}
\end{definition}

\begin{lemma}\label{lAPP1} \cite[Ch.I]{Che_2009} The following statements are
equivalent:
\begin{enumerate}
\item the function $\varphi \in C(\mathbb T,\mathfrak B)$ is
asymptotically stationary (respectively, asymptotically
$\tau$-periodic or asymptotically almost periodic); \item the
motion $\sigma(t,\varphi)$ of shift dynamical system $C(\mathbb
T,\mathfrak B),\mathbb T,\sigma)$ is asymptotically stationary
(respectively, asymptotically $\tau$-periodic or asymptotically
almost periodic).
\end{enumerate}
\end{lemma}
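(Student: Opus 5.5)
The plan is to pass between the two characterizations through the shift dynamical system, proving the two implications separately.

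\emph{From (1) to (2).} Suppose $\varphi=p+r$ with $r\in C_{0}(\mathbb T,\mathfrak B)$ and $p$ stationary (respectively, $\tau$-periodic or almost periodic). Then the function $p$, regarded as a point of $C(\mathbb T,\mathfrak B)$, is a stationary (respectively, $\tau$-periodic or almost periodic) point of $(C(\mathbb T,\mathfrak B),\mathbb T,\sigma)$. In the almost periodic case this holds because $d(p^{t+s},p^{t})\le\sup_{u}|p(u+s)-p(u)|$, so every $\varepsilon$-almost period of the function is an $\varepsilon$-almost period of the motion.

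It remains to check that $d(\varphi^{t},p^{t})\to0$ as $t\to+\infty$. For each $L>0$,
$$\max_{|s|\le L}|\varphi(t+s)-p(t+s)|=\max_{|s|\le L}|r(t+s)|\to0,$$
and by Remark \ref{remD1} this gives $d(\varphi^{t},p^{t})\to 0$.

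\emph{From (2) to (1).} Suppose there is a point $q\in C(\mathbb T,\mathfrak B)$ with $d(\varphi^{t},q^{t})\to0$, where $q$ is stationary (respectively, $\tau$-periodic or almost periodic). Set $p:=q$ and $r:=\varphi-q\in C(\mathbb T,\mathfrak B)$. Evaluation at $0$ is $1$-Lipschitz for small distances: if $d(\psi_1,\psi_2)<1$, then taking $L=1$ shows $|\psi_1(0)-\psi_2(0)|\le d(\psi_1,\psi_2)$. Consequently
$$|r(t)|=|\varphi^{t}(0)-q^{t}(0)|\le d(\varphi^{t},q^{t})\to0,$$
so $r\in C_0$.

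It remains to transfer the type of $q$ back to the function. The stationary and $\tau$-periodic cases are immediate, since $q^{\tau}=q$ means exactly $q(t+\tau)=q(t)$ for all $t$.

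The almost periodic case is the main obstacle, because almost periodicity of the motion is only measured in the metric $d$. Here I would use that $q^{t+s}$ being $\varepsilon$-close to $q^{t}$ in $d$ for all $t$ gives $|q(t+s)-q(t)|\le\varepsilon$ for all $t$, by evaluating at $0$ as above. So the $\varepsilon$-almost periods of the motion (for $\varepsilon<1$) are $\varepsilon$-almost periods of the function, and these sets are relatively dense.

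On $\mathbb T=\mathbb R_{+}$ the same evaluation argument applies, using $t\ge0$. Alternatively, the claim follows from Theorem \ref{thRAP4.1}, with strong comparability provided by the evaluation map.
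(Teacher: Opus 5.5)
Your proof is correct. The paper gives no proof of this lemma; it is quoted from \cite[Ch.I]{Che_2009}, so there is no route to compare with, and your argument serves as a self-contained replacement. It rests on two facts: $d(f,g)\le\sup_{t\in\mathbb T}|f(t)-g(t)|$, and $|f(0)-g(0)|\le d(f,g)$ whenever $d(f,g)<1$. These are exactly what is needed to carry stationarity, $\tau$-periodicity, almost periodicity and the vanishing of the remainder back and forth between the function and its motion in the shift system, for both $\mathbb T=\mathbb R$ and $\mathbb T=\mathbb R_{+}$.

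There are two minor corrections.

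First, in the almost periodic case of (1)$\Rightarrow$(2), the inequality $d(p^{t+s},p^{t})\le\sup_u|p(u+s)-p(u)|$ only gives $d(p^{t+s},p^{t})\le\varepsilon$ for an $\varepsilon$-almost period $s$ of $p$. Definition \ref{defAP1} requires the strict inequality. Either use $\varepsilon/2$-almost periods of $p$, or note that for $L_1>1/\varepsilon$ one has $d(f,g)\le\max\{\max_{|t|\le L_1}|f(t)-g(t)|,L_1^{-1}\}<\varepsilon$, because the maximum over a compact interval is attained.

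Second, drop the closing ``alternatively'' sentence. Theorem \ref{thRAP4.1} concerns points of dynamical systems, and evaluation at $0$ is in general not a homomorphism onto any flow on $\mathfrak B$. For example, with $q(t)=\sin t$ the shifts $q$ and $q^{\pi}=-q$ take the same value at $0$ but have different futures. So that route does not work as stated, and your direct evaluation argument already does the job.
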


\begin{theorem}\label{th1RAP} Let $\varphi \in C(\mathbb T,\mathfrak
B)$ be a Lagrange stable function. The function $\varphi$ is
positively remotely almost periodic (respectively, both positively
and negatively remotely almost periodic if $\mathbb T=\mathbb S$)
if and only if its $\omega$-limit set $\omega_{\varphi}$
(respectively, its dynamically limit set
$\Delta_{\varphi}:=\alpha_{\varphi}\bigcup \omega_{\varphi}$) is
equi-almost periodic.
\end{theorem}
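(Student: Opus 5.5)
The plan is to reduce Theorem \ref{th1RAP} to the corresponding statements for motions of the shift dynamical system $(C(\mathbb T,\mathfrak B),\mathbb T,\sigma)$. By Lemma \ref{lAPF3.1}, $\varphi$ is (positively) remotely almost periodic exactly when the motion $\sigma(t,\varphi)$ is remotely almost periodic. Since $\varphi$ is Lagrange stable, the point $\varphi$ is positively Lagrange stable in the shift system. Lemma \ref{lRAP1} then says that this motion is remotely almost periodic if and only if $\omega_{\varphi}$ is equi-almost periodic. This settles the one-sided case.

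A small point needs care: Lemma \ref{lRAP1} is stated for $(X,\mathbb S_{+},\pi)$. When $\mathbb T=\mathbb S$ I will restrict the two-sided flow to $\mathbb S_{+}$. The $\omega$-limit set is unchanged, since it only uses $t_k\to+\infty$. Equi-almost periodicity of the compact invariant set $\omega_{\varphi}$ for positive times gives it for all $t\in\mathbb S$: any $t<0$ can be written as $\sigma(s,q)$ with $q=\sigma(t-s,p)\in\omega_{\varphi}$ by invariance, and then the uniform estimate over $p\in\omega_{\varphi}$ applies.

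For the two-sided statement when $\mathbb T=\mathbb S$, I first need to interpret ``both positively and negatively remotely almost periodic'' for the function as the condition with $|t|\ge L$ in Definition \ref{defTSR1}. Applying Lemma \ref{lAPF3.1} (or its two-sided analogue, proved the same way, by comparing $d(\varphi^{t+\tau},\varphi^{t})$ with $\sup_{|s|\le L}|\varphi(t+\tau+s)-\varphi(t+s)|$) turns this into two-sided remote almost periodicity of the motion $\sigma(t,\varphi)$. Theorem \ref{thTSR1} then gives equivalence with equi-almost periodicity of $\Delta_{\varphi}=\alpha_{\varphi}\cup\omega_{\varphi}$.

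An alternative route for this direction is to treat the positive and negative halves separately. Positive remote almost periodicity corresponds to $\omega_{\varphi}$ by the argument above. Negative remote almost periodicity corresponds to $\alpha_{\varphi}$, by applying the same argument to the reflected function $\check\varphi(t)=\varphi(-t)$, whose $\omega$-limit set is the reflection of $\alpha_{\varphi}$; reflection preserves equi-almost periodicity. Lemma \ref{lEAP2}(2) then combines the two sets: if each is equi-almost periodic, so is their union. Conversely, subsets of an equi-almost periodic set inherit the property.

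The main obstacle is the passage between the function-level condition (pointwise $|\varphi(t+\tau)-\varphi(t)|<\varepsilon$ for large $t$) and the motion-level condition ($d(\varphi^{t+\tau},\varphi^{t})<\varepsilon$ for large $t$) in the two-sided setting. This uses uniform estimates on windows $|s|\le L$ with $L\sim\varepsilon^{-1}$. Going from the motion-level to the function-level condition is immediate by taking $s=0$. Going the other way requires shrinking $\varepsilon$ and enlarging $L(\varepsilon,\varphi,\tau)$ by $1/\varepsilon$. I would follow the proof of Lemma \ref{lAPF3.1} to handle this.
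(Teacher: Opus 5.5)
Your proposal is correct and follows the same route as the paper. The paper's proof simply cites Lemma \ref{lAPF3.1} to pass from $\varphi$ to the motion $\sigma(t,\varphi)$ in the shift system, then Lemma \ref{lRAP1} for the one-sided case and Theorem \ref{thTSR1} for the two-sided case. Your additional remarks only make explicit details the paper leaves implicit: extending equi-almost periodicity of the invariant set $\omega_{\varphi}$ to all $t\in\mathbb S$, the two-sided analogue of Lemma \ref{lAPF3.1}, and the reflection plus Lemma \ref{lEAP2} argument reconciling separate positive and negative almost-period sets with the single set in Definition \ref{defTSR1}.
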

\begin{proof} This statement follows directly from Lemmas
\ref{lRAP1}, \ref{lAPF3.1} and Theorem \ref{thTSR1}.
\end{proof}

\subsection{$S^{p}$ remotely almost periodic functions}\label{Sec2.3}

\subsubsection{Shift dynamical systems on the space
$L_{loc}^p(\mathbb{R};\mathfrak{B};\mu).$}\label{Sec2.3.1}

Let $S\subseteq \mathbb R,\ \mathbb T\in \{\mathbb
R_{+},\mathbb{R}\},\;$ $(\mathbb T,\mathfrak{B};\mu)$ be a space
with measure and $\mu$ is the Radon measure, $\mathfrak{B}$ - is a
Banach space with the norm $|\cdot|$.

\begin{definition}
A function $f:S\to\mathfrak{B}$ is called \cite{shv1} a
step-function\index{step-function} if it takes no more than a
finite number of values. In this case it is called measurable, if
$f^{-1}(\{x\})\in \mathfrak{B}$ for every $x\in\mathfrak{B}$, and
integrable if in addition $\mu(f^{-1}(\{x\})<+\infty$. Then there
is defined
\begin{equation}\label{eq1.5.1}
\int fd\mu=\sum_{x\in\mathfrak{B}}\mu(f^{-1}(\{x\})x.
\end{equation}
The sum in the right hand side of the equality (\ref{eq1.5.1}) is
finite by assumption.
\end{definition}

\begin{definition}
A function $f:S\to\mathfrak{B}$ is said to be measurable
\index{measurable function} if there exists a sequence $\{f_n\}$
of step-functions measurable and such that $f_n(s)\to f(s)$ with
respect to the measure $\mu$ almost everywhere.
\end{definition}

\begin{definition}
A function $f:S\to\mathfrak{B}$ is called integrable, if there
exists a sequence $\{f_n\}$ of step-functions, integrable and such
that for every $n$ the function $\varphi_n(s)=|f_n(s)-f(s)|$ is
integrable and
$$
\lim\limits_{n\to+\infty}\int|f_n(s)-f(s)|d\mu(s)=0.
$$
Then $\int f_nd\mu$ converges in the space $\mathfrak{B}$ and its
limit does not depend on the choice of the approximating sequence
$\{f_n\}$ with the above mentioned properties. This limit is
denoted by $\int fd\mu$ or $\int f(s)d\mu(s)$.
\end{definition}

Let $1\le p\le+\infty$. By $L^p(\mathbb T;\mathfrak{B};\mu)$
\index{$L^p(S;\mathfrak{B};\mu)$} there is denoted the space of
all measurable functions (classes of functions)
$f:S\to\mathfrak{B}$ such that $|f|\in L^p(S;\mathbb{R};\mu)$,
where $|f|(s)=|f(s)|$. The space $L^p(S;\mathfrak{B};\mu)$ is
endowed with the norm
\begin{equation}\label{eq1.5.2}
||f||_{L^p}=\big{(}\int|f(s)|^p
d\mu(s)\big{)}^{1/p}\quad\mbox{and}\quad
||f||_{\infty}=supess|f(s)|.
\end{equation}
$L^p(S;\mathfrak{B};\mu)$ with the norm (\ref{eq1.5.2}) is a
Banach space.

Denote by $L_{loc}^p(\mathbb T;\mathfrak{B};\mu)$
\index{$L_{loc}^p(S;\mathfrak{B};\mu)$} the set of all function
$f:\mathbb{T}\to\mathfrak{B}$ such that $f_l\in L^p([-l,l];$
$\mathfrak{B};$ $\mu)$ for every $l>0$, where $f_l$ is the
restriction of the function $f$ onto $\mathbb T_{l}$, where
$\mathbb T_{l}:=[-l,l]$ (if $\mathbb T =\mathbb R$) and $\mathbb
T_{l}:=[0,l]$ (if $\mathbb T=\mathbb R_{+}$).

In the space $L_{loc}^p(\mathbb{T};\mathfrak{B};\mu)$
\index{$L_{loc}^p(\mathbb{T};\mathfrak{B};\mu)$} we define a
family of semi-norms $||\cdot||_{\ell,p}$ by the following rule:
\begin{equation}\label{eq1.5.3}
||f||_{\ell,p}:=||f_{\ell}||_{L^p(\mathbb
T_{\ell};\mathfrak{B};\mu)} \quad(\ell >0).
\end{equation}
Family of the semi-norms (\ref{eq1.5.3}) defines a metrizable
topology on $L_{loc}^p(\mathbb{T};\mathfrak{B};\mu)$. The metric
that gives this topology can be defined, for instance, by the next
equality
\begin{equation}\label{eq1.5.4}
d_p(\varphi,\psi)=\sup\limits_{\ell
>0}\min\{\big{(}\int\limits_{\mathbb T_{\ell}}|\varphi(s)-\psi(s)|^{p}d\mu(s)\big{)}^{1/p};\frac{1}{\ell}\}.\nonumber
\end{equation}

\begin{remark}\label{remS_01}\cite{Che_2024.3} The following statements are
equivalent:
\begin{enumerate}
\item
\begin{equation}\label{eqS_1}
d_{p}(\varphi_n,\varphi)\to 0 \ \ \mbox{as}\ \ n\to \infty
;\nonumber
\end{equation}
\item
\begin{equation}\label{eqS_02}
\lim\limits_{n\to \infty}d_{\ell,p}(\varphi_n,\varphi)=0 \nonumber
\end{equation}
for every $\ell>0$, where $d_{\ell,p}(\varphi,\psi):=\|\varphi -\psi
\|_{\ell,p}$.
\end{enumerate}
\end{remark}

Let us define a mapping
$\sigma:L_{loc}^p(\mathbb{T};\mathfrak{B};\mu)\times\mathbb{T}\to
L_{loc}^p(\mathbb{T};\mathfrak{B};\mu)$ as follows:
$\sigma(\tau,f)=f^{\tau}$ for all $f\in
L_{loc}^p(\mathbb{T};\mathfrak{B};\mu)$ and $\tau\in\mathbb{T}$,
where $f^{\tau}(s):=f(s+\tau)$ $(s\in\mathbb{T})$. It easy to
check that
\begin{enumerate}
\item $\sigma(0,f)=f$ for every $f\in
L_{loc}^p(\mathbb{T};\mathfrak{B};\mu)$; \item
$\sigma(t+\tau,f)=\sigma(t,\sigma(\tau,f))$ for all $t,\tau\in
\mathbb T$ and $f\in L_{loc}^p(\mathbb{T};\mathfrak{B};\mu)$.
\end{enumerate}

\begin{lemma}\label{l1.5.2}\cite[Ch.I]{Che_2009} The mapping
$\sigma :\mathbb T\times L_{loc}^p(\mathbb{T};\mathfrak{B};\mu)
\to L_{loc}^p(\mathbb{T};\mathfrak{B};\mu)$ is continuous and,
consequently, the triplet
$(L_{loc}^p(\mathbb{T};\mathfrak{B};\mu),\mathbb{T},\sigma)$
\index{$(L_{loc}^p(\mathbb{T};\mathfrak{B};\mu),\mathbb{T},\sigma)$}
is a dynamical system on the space
$L_{loc}^p(\mathbb{T};\mathfrak{B};\mu)$.
\end{lemma}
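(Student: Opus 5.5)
To prove Lemma \ref{l1.5.2}, I would check the two properties listed just before the statement (which are immediate from the definition of $f^{\tau}$) and then establish joint continuity of $(\tau,f)\mapsto f^{\tau}$ with respect to the metric $d_p$. By Remark \ref{remS_01}, convergence in $d_p$ is equivalent to convergence in every seminorm $\|\cdot\|_{\ell,p}$. So it suffices to take $\tau_n\to\tau$ in $\mathbb T$ and $f_n\to f$ in $L^p_{loc}$, and to show that $\|f_n^{\tau_n}-f^{\tau}\|_{\ell,p}\to 0$ for each fixed $\ell>0$. I would treat $1\le p<\infty$, which is the intended setting for Stepanov functions; for $p=\infty$ the shift is not continuous, as the example of a step function shows.

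The key step is the triangle inequality
$$\|f_n^{\tau_n}-f^{\tau}\|_{\ell,p}\le \|f_n^{\tau_n}-f^{\tau_n}\|_{\ell,p}+\|f^{\tau_n}-f^{\tau}\|_{\ell,p}.$$
For the first term, I would change variables $s\mapsto s+\tau_n$, which is legitimate because $\mu$ is Lebesgue measure and hence translation invariant. Since $\{\tau_n\}$ is bounded, say $|\tau_n|\le M$, this gives
$$\|f_n^{\tau_n}-f^{\tau_n}\|_{\ell,p}\le \|f_n-f\|_{\ell+M,p}\to 0.$$
This uses that $\mathbb T_{\ell}+\tau_n\subseteq \mathbb T_{\ell+M}$; for $\mathbb T=\mathbb R_+$ note that $\tau_n\ge 0$, so the shifted interval remains in $[0,\ell+M]$.

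The second term reduces to continuity of translation in $L^p$ for a single function $f$, and this is the main (though standard) obstacle. I would restrict $f$ to the compact interval $\mathbb T_{\ell+M+1}$, where it lies in $L^p$. Given $\varepsilon>0$, I would approximate it in $L^p$-norm by a continuous function $g$ with compact support, which exists by density of such functions in Bochner $L^p$ for $p<\infty$ (approximate first by integrable step functions as in the definitions above, then by continuous ones). Then
$$\|f^{\tau_n}-f^{\tau}\|_{\ell,p}\le 2\|f-g\|_{\ell+M+1,p}+\|g^{\tau_n}-g^{\tau}\|_{\ell,p}.$$
The last term tends to $0$ by uniform continuity of $g$ on a compact interval. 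Hence
$$\limsup_{n\to\infty}\|f^{\tau_n}-f^{\tau}\|_{\ell,p}\le 2\varepsilon,$$
and since $\varepsilon$ is arbitrary, the second term tends to $0$.

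Combining the two estimates gives joint continuity. Together with the identity and semigroup properties, this shows that $(L^p_{loc}(\mathbb T;\mathfrak B;\mu),\mathbb T,\sigma)$ is a dynamical system.
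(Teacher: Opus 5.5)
Your argument is correct. The paper gives no proof of this lemma; it simply quotes it from \cite[Ch.I]{Che_2009}. What you wrote is the standard proof of this fact. The triangle inequality splits $\|f_n^{\tau_n}-f^{\tau}\|_{\ell,p}$ into a term bounded by $\|f_n-f\|_{\ell+M,p}$ via translation invariance. The remaining term, $\|f^{\tau_n}-f^{\tau}\|_{\ell,p}$, tends to $0$ by continuity of translation in $L^p$, which you obtain from density of continuous compactly supported functions.

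The two caveats you make explicit are genuinely needed, and the paper leaves both only implicit:
\begin{enumerate}
\item $\mu$ has to be Lebesgue measure. The paper's wording ``the Radon measure'' is not enough: for an arbitrary Radon measure, e.g.\ a Dirac mass, the shift is not even well defined on equivalence classes.
\item $p<\infty$ is required. Your step-function example shows continuity fails for $p=\infty$, and the metric $d_p$ is written only for finite $p$.
\end{enumerate}
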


\begin{lemma}\label{lS1}\cite{Che_2024.3} Let $\varphi \in
L_{loc}^p(\mathbb{T};\mathfrak{B};\mu)$ and $\mathbb
T_{\ell}:=[-\ell,\ell]$ ($\ell>0$) if $\mathbb T=\mathbb R$ and
$\mathbb T_{\ell}=[0,\ell]$ if $\mathbb T=\mathbb R_{+}$. The
following conditions are equivalent:
\begin{enumerate}
\item
\begin{equation}\label{eqS_01}
\lim\limits_{t\to +\infty}\int_{0}^{1}|\varphi(t+s)|^{p}d\mu(s)=0;
\end{equation}
\item
\begin{equation}\label{eqS_2}
\lim\limits_{t\to +\infty}\int\limits_{\mathbb
T_{\ell}}|\varphi(t+s)|^{p}d\mu(s)=0 \nonumber
\end{equation}
for every $\ell >0$.
\end{enumerate}
\end{lemma}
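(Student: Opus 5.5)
The plan is to prove the two implications separately, using only the change of variables $s\mapsto s-k$ and the fact that a unit interval is covered by finitely many unit translates. Here $\mu$ is Lebesgue measure, as is implicit in writing $\varphi(t+s)$ and $\mathbb T_{\ell}$. The direction (2) $\Rightarrow$ (1) is immediate. For $\ell\ge 1$ we have $[0,1]\subseteq \mathbb T_{\ell}$ in both cases $\mathbb T=\mathbb R_{+}$ and $\mathbb T=\mathbb R$. Hence
$$\int_{0}^{1}|\varphi(t+s)|^{p}d\mu(s)\le \int\limits_{\mathbb T_{\ell}}|\varphi(t+s)|^{p}d\mu(s),$$
and letting $t\to+\infty$ gives (\ref{eqS_01}).

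For (1) $\Rightarrow$ (2), fix $\ell>0$ and put $N:=\lceil \ell\rceil$. If $\mathbb T=\mathbb R_{+}$, then $\mathbb T_{\ell}=[0,\ell]\subseteq [0,N]=\bigcup_{k=0}^{N-1}[k,k+1]$. If $\mathbb T=\mathbb R$, then $\mathbb T_{\ell}\subseteq [-N,N]=\bigcup_{k=-N}^{N-1}[k,k+1]$. In either case, translation invariance of $\mu$ gives
$$\int\limits_{\mathbb T_{\ell}}|\varphi(t+s)|^{p}d\mu(s)\le \sum_{k}\int_{0}^{1}|\varphi(t+k+s)|^{p}d\mu(s),$$
a sum of at most $2N$ terms. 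Write $F(t):=\int_{0}^{1}|\varphi(t+s)|^{p}d\mu(s)$. Each term equals $F(t+k)$, and $t+k\to+\infty$ as $t\to+\infty$ because $k$ ranges over a fixed finite set. By (1), each term tends to $0$, so the finite sum tends to $0$, which is (2).

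The only point needing care is the two-sided case, where $k$ can be negative. We need $t+k$ to lie in the domain where $\varphi$ is defined, and we need it to still go to $+\infty$. Both hold for $t\ge N$, so the limit is unaffected. Since the sum has a fixed finite number of terms independent of $t$, there is no uniformity issue. I therefore expect no genuine obstacle: the argument is just the covering of $\mathbb T_{\ell}$ by finitely many unit intervals, combined with translation invariance of the integral.
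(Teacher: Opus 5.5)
Your proof is correct and complete: (2)$\Rightarrow$(1) follows from $[0,1]\subseteq\mathbb T_{1}$ and monotonicity of the integral. (1)$\Rightarrow$(2) follows by covering $\mathbb T_{\ell}$ with at most $2\lceil \ell\rceil$ unit intervals and rewriting each piece as $F(t+k)$, with $k$ in a fixed finite set.

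The paper gives no proof of this lemma, since it is quoted from \cite{Che_2024.3}, so there is no argument to compare yours against. Your explicit assumption that $\mu$ is Lebesgue measure is appropriate: translation invariance is exactly what the covering step uses, and the paper's later computations are written with $ds$. Your concern about the domain of $\varphi$ in the two-sided case is unnecessary, because there $\varphi$ is defined on all of $\mathbb R$, and in the one-sided case every $k\ge 0$.
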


\begin{definition}\label{defV1} We will say that a function $\varphi \in L^{p}_{loc}(\mathbb T,\mathfrak
B)$ is $S^{p}$ vanishing at the $+\infty$ if $\lim\limits_{h\to
+\infty}\varphi^{h}=\theta$, where $\theta$ is the null element of
$L^{p}_{loc}(\mathbb T,\mathfrak B)$, i.e., if
\begin{equation}\label{eqV_01}
\lim\limits_{t\to +\infty}\int\limits_{\mathbb
T_{\ell}}|\varphi(t+s)|^{p}d\mu(s)=0 \nonumber
\end{equation}
for every $\ell >0$.
\end{definition}

\begin{remark}\label{rem_V1} A function $\varphi \in L^{p}_{loc}(\mathbb T,\mathfrak
B)$is vanishing at the $+\infty$ if and only if the relation
(\ref{eqS_01}) holds.
\end{remark}

This statement follows from the Definition \ref{defV1} and Lemma
\ref{lS1}.

\subsubsection{Stepanov remotely almost periodic
functions}\label{Sec2.3.2}

Recall \cite{Che_2024.3} that a function $\varphi \in
L^{p}_{loc,\tau}(\mathbb T,\mathfrak B)$ is said to be:
\begin{enumerate}
\item $S^{p}$ $\tau$-periodic if $\varphi^{\tau}=\varphi$, i.e.,
$\int_{0}^{1}|\varphi(t+\tau +s)-\varphi(t+s)|^{p}d\mu(s)=0$ for
all $t\in \mathbb R$; \item $S^{p}$ stationary if it is
$\tau$-periodic for every $\tau \in \mathbb T$.
\end{enumerate}

Denote by
\begin{enumerate}
\item $L^{p}_{loc,\tau}(\mathbb T,\mathfrak B)$ the space of all
$\tau$-periodic function $\varphi$ from $L^{p}_{loc}(\mathbb
T,\mathfrak B)$; \item $L^{p}_{loc,st.}(\mathbb T,\mathfrak B)$
the space of all stationary functions from $L^{p}_{loc}(\mathbb
T,\mathfrak B)$; \item $L^{p}_{loc,0}(\mathbb T,\mathfrak B)$ the
space of all functions $\varphi$ from $L^{p}_{loc}(\mathbb
T,\mathfrak B)$ $S^{p}$-vanishing at the infinity, i.e., if
\begin{equation}\label{eqV1}
\lim\limits_{t\to +\infty}\int_{0}^{1}|\varphi(t+s)|^{p}d\mu(s)
=0.\nonumber
\end{equation}
\end{enumerate}

Let $\varphi \in L^{p}_{loc}(\mathbb T;\mathfrak B;\mu)$. Denote
by $\hat{\varphi}$ the mapping from $\mathbb T$ to
$L^{p}([0,1];\mathfrak B;\mu)$ defined by
$\hat{\varphi}(t):=\varphi^{t}{|_{[0,1]}}$, i.e.,
\begin{equation}\label{eqP1}
\big{(}\hat{\varphi}(t)\big{)}(s):=\varphi(t+s) \nonumber
\end{equation}
for any $s\in [0,1]$ and $t\in \mathbb T$. It is clear that
$\hat{\varphi}(t)\in L^{p}([0,1];\mathfrak B;\mu)$ because
$\varphi \in L^{p}_{loc}(\mathbb T;\mathfrak B;\mu)$.

\begin{definition}\label{defSAPF1} A function $\varphi \in L^{p}_{loc}(\mathbb
T,X)$ is said to be:
\begin{enumerate}
\item $S^{p}$ almost periodic if the function $\hat{\varphi}\in
C(\mathbb T,L^{p}([0,1],\mathfrak B))$ is almost periodic, that
is, for every $\varepsilon >0$ there exists a relatively dense in
$\mathbb T$ subset $\mathcal F(\varepsilon,\varphi)\subset \mathbb
T$ such that
\begin{equation}\label{eqSAP1}
\int_{0}^{1}|\varphi(t+\tau
+s)-\varphi(t+s)|^{p}d\mu(s)<\varepsilon^{p}\nonumber
\end{equation}
for all $t\in \mathbb T$ and $\tau \in \mathcal
F(\varepsilon,\varphi)$; \item $S^{p}$ asymptotically $\tau $
($\tau\in \mathbb T$, $\tau >0$)-periodic (respectively,
asymptotically stationary) if there exist $\textbf{p}\in
L^{p}_{loc,\tau}(\mathbb T,\mathfrak B)$ (respectively,
$\textbf{p}\in L^{p}_{loc,st.}(\mathbb T,\mathfrak B)$ ) and $r\in
L^{p}_{loc,0}(\mathbb T,\mathfrak B)$ such that
$\varphi(t)=\textbf{p}(t)+r(t)$ for all $t\in \mathbb T$; \item
$S^{p}$ remotely $\tau$-periodic (respectively, $S^{p}$ remotely
stationary) if
\begin{equation}\label{eqV2}
\lim\limits_{t\to
+\infty}\int_{0}^{1}|\varphi(t+\tau+s)-\varphi(t+s)|^{p}d\mu(s)
=0.\nonumber
\end{equation}
(respectively, $S^{p}$ remotely $\tau$-periodic for every $\tau
>0$);
\item $S^{p}$ remotely almost periodic if for every $\varepsilon
>0$ there exists a relatively dense in $\mathbb T$ subset $\mathcal
P(\varepsilon,\varphi)$ such that for all $\tau \in \mathcal
P(\varepsilon,\varphi)$ we have a positive number
$L(\varepsilon,\varphi,\tau)$ so that
\begin{equation}\label{eqV3}
\int_{0}^{1}|\varphi(t+\tau
+s)-\varphi(t+s)|^{p}d\mu(s)<\varepsilon^{p} \nonumber
\end{equation}
for all $t\ge L(\varepsilon,\varphi,\tau)$.
\end{enumerate}
\end{definition}

\begin{remark}\label{remV1} Every $S^{p}$ remotely
$\tau$-periodic function is $S^{p}$ remotely almost periodic.
\end{remark}

\begin{remark}\label{remV2} Every $S^{p}$ asymptotically
$\tau$-periodic (respectively, $S^{p}$ asymptotically stationary)
function $\varphi \in L^{p}_{loc}(\mathbb T,\mathfrak B)$ is
$S^{p}$ remotely $\tau$-periodic (respectively, $S^{p}$ remotely
stationary).
\end{remark}

\begin{definition}\label{defAPF2} A function $\varphi \in L^{p}_{loc}(\mathbb
T,\mathfrak B)$ is said to be Lagrange stable if the motion
$\sigma(t,\varphi)$ is so in the shift dynamical system
$(L^{p}_{loc}(\mathbb T,\mathfrak B),\mathbb T,\sigma)$.
\end{definition}

\begin{lemma}\label{lAPF2} \cite[Ch.III]{Sel_1971}  Assume that the Banach space $\mathfrak B$ is finite dimensional.
A function $\varphi \in L^{p}_{loc}(\mathbb T,\mathfrak B)$ is
Lagrange stable if and only if the following conditions are
fulfilled:
\begin{enumerate}
\item the set function $\varphi$ is $S^{p}$ bounded, i.e.,
$\sup\{\int_{0}^{1}|\varphi(t+s)|^{p}ds |\ t\in \mathbb
T\}<+\infty$; \item the function $\varphi \in L^{P}_{loc}(\mathbb
T,\mathfrak B)$ is $S^{p}$ uniformly continuous on $\mathbb T$,
i.e., for every $\varepsilon >0$ there exists a positive number
$\delta =\delta(\varepsilon)>0$ such that $|h|<\delta$ implies
\begin{equation}\label{eqV4}
\big{(}\int_{0}^{1}|\varphi(t+h+s)-\varphi(t+s)|^{p}d\mu(s)\big{)}^{1/p}
<\varepsilon \nonumber
\end{equation}
for all $t\in \mathbb T$.
\end{enumerate}
\end{lemma}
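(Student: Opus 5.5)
The plan is to reduce Lagrange stability in $L^{p}_{loc}(\mathbb T,\mathfrak B)$ to a family of compactness statements in the Banach spaces $L^{p}(\mathbb T_{\ell},\mathfrak B)$, and then to apply the Fréchet--Kolmogorov--Riesz criterion. Finite dimensionality of $\mathfrak B$ is used only in that last step. Write $\Sigma_{\varphi}:=\{\varphi^{h}\mid h\in\mathbb T\}$.

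\textbf{Reduction.} By Remark \ref{remS_01}, convergence in $d_{p}$ is convergence in every seminorm $\|\cdot\|_{\ell,p}$. A Cantor diagonal argument over $\ell=1,2,\dots$ then shows that $\Sigma_{\varphi}$ is precompact in $L^{p}_{loc}$ if and only if, for every $\ell>0$, the restrictions $\{\varphi^{h}|_{\mathbb T_{\ell}}\mid h\in\mathbb T\}$ form a precompact subset of $L^{p}(\mathbb T_{\ell},\mathfrak B)$. Covering $\mathbb T_{\ell}$ by finitely many unit intervals gives
\[
\sup_{h}\int_{\mathbb T_{\ell}}|\varphi(h+s)|^{p}ds\le C_{\ell}\sup_{t}\int_{0}^{1}|\varphi(t+s)|^{p}ds ,
\]
and the analogous inequality for the differences $\varphi(h+\delta+s)-\varphi(h+s)$. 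Hence conditions (1) and (2) for the unit window are equivalent to the corresponding uniform statements on every window $\mathbb T_{\ell}$. (If $\mathbb T=\mathbb R$, a window of the form $[-\ell,\ell]$ is just a shift of $[0,2\ell]$, so nothing changes.)

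\textbf{Necessity.} Suppose $\Sigma_{\varphi}$ is precompact. Then $\{\varphi^{h}|_{[0,1]}\}$ is precompact in $L^{p}([0,1],\mathfrak B)$, hence bounded, which is (1).

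For (2), let $K$ be the closure of $\Sigma_{\varphi}$; it is compact. By Lemma \ref{l1.5.2} the map $(\delta,f)\mapsto f^{\delta}$ is continuous, so it is uniformly continuous on $[-1,1]\times K$. (For $\mathbb T=\mathbb R_{+}$ one uses $\delta\ge 0$ and the symmetry of the condition in $h$.) Taking $f=\varphi^{t}$ and evaluating on the window $[0,1]$, this gives
\[
\sup_{t}\Big(\int_{0}^{1}|\varphi(t+\delta+s)-\varphi(t+s)|^{p}ds\Big)^{1/p}\to 0 \quad\text{as } \delta\to 0 ,
\]
which is (2).

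\textbf{Sufficiency.} This is the main step, and the place where $\dim\mathfrak B<\infty$ is essential; in $c_{0}$, for example, it fails. Fix $\ell$. For $\delta\in(0,1)$ define the Steklov averages
\[
\varphi_{\delta}(t):=\frac1\delta\int_{0}^{\delta}\varphi(t+u)\,du .
\]
\begin{enumerate}
\item By Minkowski's integral inequality and (2),
\[
\sup_{h}\|\varphi^{h}-\varphi_{\delta}^{h}\|_{\ell,p}\le \sup_{0\le u\le\delta}\sup_{h}\|\varphi^{h+u}-\varphi^{h}\|_{\ell,p}\to 0 \quad\text{as } \delta\to 0 .
\]
So it suffices to show that $\{\varphi_{\delta}^{h}|_{\mathbb T_{\ell}}\}_{h}$ is precompact for each fixed $\delta$.
\item By Hölder's inequality and (1), the functions $\varphi_{\delta}^{h}$ are uniformly bounded in sup norm by $\delta^{-1/p}C$.
\item By Hölder's inequality and (2), they are equicontinuous: $|\varphi_{\delta}(t+\eta)-\varphi_{\delta}(t)|\le\delta^{-1}\cdot\delta^{1-1/p}\cdot\omega(\eta)$, where $\omega$ is the modulus from (2).
\end{enumerate}
Because the values lie in a bounded subset of the finite-dimensional space $\mathfrak B$, which is relatively compact, the Arzelà--Ascoli theorem shows that this family is precompact in $C(\mathbb T_{\ell},\mathfrak B)$, hence in $L^{p}(\mathbb T_{\ell},\mathfrak B)$.

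A set that lies within distance $\varepsilon$ of a precompact set for every $\varepsilon>0$ is totally bounded. Therefore $\{\varphi^{h}|_{\mathbb T_{\ell}}\}$ is precompact for every $\ell$, and the reduction step gives Lagrange stability.
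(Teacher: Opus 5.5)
Your proof is correct. The paper gives no argument of its own for this lemma; it only cites Sell's monograph, so there is no internal proof to match. What you have written is a self-contained version of the classical Fr\'echet--Kolmogorov--Riesz compactness argument behind this characterization:
\begin{itemize}
\item reduction to the windows $\mathbb T_{\ell}$ by a diagonal argument;
\item necessity from compactness of the hull together with uniform continuity of the shift on $[-1,1]\times K$;
\item sufficiency via Steklov averages, Arzel\`a--Ascoli, and the criterion ``within $\varepsilon$ of a precompact set for every $\varepsilon$'' for total boundedness.
\end{itemize}
The necessity step goes through Lemma~\ref{l1.5.2}, which tacitly requires $p<\infty$, as does the whole Stepanov setting, because translation is not continuous in $L^{\infty}_{loc}$.

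Compared with the bare citation, your route shows exactly where $\dim\mathfrak B<\infty$ enters: only to make the bounded range of each $\varphi_{\delta}$ relatively compact, so that Arzel\`a--Ascoli applies. The same argument, verbatim with Bochner integrals, therefore gives a statement for an arbitrary Banach space $\mathfrak B$. There, $\varphi$ is Lagrange stable if and only if (1) and (2) hold and every Steklov average $\varphi_{\delta}$ has relatively compact range in $\mathfrak B$. The extra condition is necessary because $\varphi_{\delta}(t)=\delta^{-1}\int_{0}^{\delta}\varphi^{t}(u)\,du$ is the image of $\varphi^{t}$ under a continuous linear map $L^{p}_{loc}(\mathbb T,\mathfrak B)\to\mathfrak B$.
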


\begin{lemma}\label{lAPF3}\cite{Che_2024.3} Let $\varphi \in C(\mathbb T,\mathfrak B)$ and $\tau \in
\mathbb T$ be a positive number. The following statements are
equivalent:
\begin{enumerate}
\item
\begin{equation}\label{eqAPF3}
\lim\limits_{t\to +\infty}\int_{0}^{1}|\varphi(t+\tau
+s)-\varphi(t+s)|^{p}d\mu(s)=0;
\end{equation}
\item
\begin{equation}\label{eqAPF4}
\lim\limits_{t\to +\infty}\int_{0}^{\ell}|\varphi(t+\tau
+s)-\varphi(t+s)|^{p}d\mu(s) =0 \nonumber
\end{equation}
for every $\ell >0$.
\end{enumerate}
\end{lemma}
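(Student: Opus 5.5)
The plan is to prove the implication (1)$\Rightarrow$(2) by splitting the interval $[0,\ell]$ into unit pieces, and the converse by restriction. Throughout we may assume $\ell\ge 1$, since for $\ell<1$ one has $\int_0^\ell\le\int_0^1$ and statement (2) follows trivially from (1).

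For (1)$\Rightarrow$(2), take $\ell>0$ and let $n$ be the smallest integer with $n\ge \ell$. Since the integrand is nonnegative,
\begin{equation*}
\int_{0}^{\ell}|\varphi(t+\tau+s)-\varphi(t+s)|^{p}d\mu(s)\le \sum_{k=0}^{n-1}\int_{0}^{1}|\varphi(t+k+\tau+s)-\varphi(t+k+s)|^{p}d\mu(s),
\end{equation*}
where $\mu$ is Lebesgue measure, so it is translation invariant and the change of variables $s\mapsto s+k$ is legitimate. Write $g(t):=\int_{0}^{1}|\varphi(t+\tau+s)-\varphi(t+s)|^{p}d\mu(s)$. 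Hypothesis (\ref{eqAPF3}) says that $g(t)\to 0$ as $t\to+\infty$. Each term on the right equals $g(t+k)$ with $k$ fixed, so it also tends to $0$ as $t\to+\infty$. A finite sum of $n$ terms tending to $0$ tends to $0$, and this gives (2).

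For (2)$\Rightarrow$(1), specialize (2) to $\ell=1$; this is exactly (1).

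There is no real obstacle here. The only point needing care is that the number of unit pieces $n$ depends on $\ell$ alone and not on $t$, so the finite-sum argument applies; no uniformity in $\ell$ is required. Continuity of $\varphi$ is not essential, since the same argument works for any $\varphi\in L^{p}_{loc}$. This mirrors the reasoning of Lemma \ref{lS1}, applied here to the function $\varphi^{\tau}-\varphi$ in place of $\varphi$. In fact, for $\ell\ge 1$ the statement is precisely Lemma \ref{lS1} applied to $\psi:=\varphi^{\tau}-\varphi$, so one may alternatively cite that lemma directly.
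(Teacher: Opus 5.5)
Your proof is correct. The paper quotes this lemma from earlier work without proof. Your argument is the standard one: cover $[0,\ell]$ by $\lceil\ell\rceil$ unit intervals, use translation invariance of Lebesgue measure so that each piece is $g(t+k)$ with $k$ fixed, and take $\ell=1$ for the converse; this is essentially Lemma~\ref{lS1} applied to $\varphi^{\tau}-\varphi$.

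Two minor remarks. For $\mathbb T=\mathbb R$, Lemma~\ref{lS1} is stated on $\mathbb T_{\ell}=[-\ell,\ell]$, so citing it needs the trivial bound $\int_{0}^{\ell}\le\int_{-\ell}^{\ell}$; it is not ``precisely'' the same statement. Also, your explicit reading of $\mu$ as Lebesgue measure is genuinely needed. For a general Radon measure, e.g.\ Lebesgue measure restricted to $[1,2]$ with $\varphi(t)=t$ and $\tau=1$, condition (1) holds trivially while (2) fails for $\ell=2$.
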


\begin{coro}\label{corS1} A function $\varphi \in L^{p}(\mathbb T,\mathfrak
B)$ is $S^{p}$ remotely $\tau$-periodic if and only if the
relation (\ref{eqAPF3}) holds.
\end{coro}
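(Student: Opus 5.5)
The plan is to reduce the statement to a vanishing-at-infinity statement for a single auxiliary function, and then use the results already at hand. Fix $\tau\in\mathbb T$, $\tau>0$, and put $\psi:=\varphi^{\tau}-\varphi$, i.e. $\psi(t)=\varphi(t+\tau)-\varphi(t)$.

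First I will check that $\psi\in L^{p}_{loc}(\mathbb T,\mathfrak B)$. This holds because $\varphi^{\tau}=\sigma(\tau,\varphi)\in L^{p}_{loc}(\mathbb T,\mathfrak B)$ and $L^{p}_{loc}$ is a linear space.

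By Definition \ref{defSAPF1}(3), $\varphi$ is $S^{p}$ remotely $\tau$-periodic exactly when
\begin{equation*}
\lim_{t\to+\infty}\int_{0}^{1}|\psi(t+s)|^{p}d\mu(s)=0 .
\end{equation*}
This is literally relation (\ref{eqAPF3}), and it is also relation (\ref{eqS_01}) written for $\psi$. So the first equivalence in the corollary is just an unfolding of the definitions.

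The real content is that this condition is also equivalent to the $\ell$-version, condition (2) of Lemma \ref{lAPF3}, for an arbitrary $\varphi\in L^{p}_{loc}$ and not only a continuous one. For this I will apply Lemma \ref{lS1} to $\psi$ rather than Lemma \ref{lAPF3} to $\varphi$. Lemma \ref{lS1} is stated for any element of $L^{p}_{loc}$, and it gives at once that (\ref{eqS_01}) for $\psi$ is equivalent to $\int_{\mathbb T_{\ell}}|\psi(t+s)|^{p}d\mu(s)\to0$ for every $\ell>0$. Equivalently, $\psi$ is $S^{p}$ vanishing at $+\infty$ in the sense of Definition \ref{defV1} and Remark \ref{rem_V1}.

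For completeness I would also record the elementary reason, in case the reader wants the argument independent of Lemma \ref{lS1}. One direction is monotonicity in $\ell$ (take $\ell\ge1$). For the other, cover $[0,\ell]$ by $n:=\lceil\ell\rceil$ unit intervals. Then
\begin{equation*}
\int_{0}^{\ell}|\psi(t+s)|^{p}d\mu(s)\le\sum_{k=0}^{n-1}\int_{0}^{1}|\psi(t+k+s)|^{p}d\mu(s),
\end{equation*}
and each of the finitely many terms tends to $0$ as $t\to+\infty$.

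The only point needing care is the mismatch in hypotheses. Lemma \ref{lAPF3} is formulated for $\varphi\in C(\mathbb T,\mathfrak B)$, whereas the corollary concerns $L^{p}$ functions. Routing the argument through $\psi$ and Lemma \ref{lS1} is exactly what removes the continuity assumption, so I expect no genuine obstacle beyond this bookkeeping.
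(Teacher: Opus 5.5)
Your proposal is correct, and your first observation already proves the statement as written. Under Definition \ref{defSAPF1}(3), being $S^{p}$ remotely $\tau$-periodic \emph{is} relation (\ref{eqAPF3}).

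The paper gives no separate proof. It places the corollary directly after Lemma \ref{lAPF3}, as a consequence of it. Your route differs in how the $\ell$-version is reached. The paper would invoke Lemma \ref{lAPF3} for $\varphi$, but that lemma is formulated only for $\varphi\in C(\mathbb T,\mathfrak B)$. You instead apply Lemma \ref{lS1} to $\psi=\varphi^{\tau}-\varphi\in L^{p}_{loc}(\mathbb T,\mathfrak B)$. Nothing in your argument needs continuity, so it genuinely covers the $L^{p}$ functions the corollary is about.

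Two small bookkeeping remarks:
\begin{enumerate}
\item For $\mathbb T=\mathbb R$, Lemma \ref{lS1} gives the version over $\mathbb T_{\ell}=[-\ell,\ell]$. This trivially implies the $[0,\ell]$ version in Lemma \ref{lAPF3}(2).
\item Your covering estimate uses $\int_{k}^{k+1}|\psi(t+s)|^{p}d\mu(s)=\int_{0}^{1}|\psi(t+k+s)|^{p}d\mu(s)$, i.e.\ translation invariance of $\mu$. This holds in the paper's implicit setting of Lebesgue measure, and Lemma \ref{lS1} itself needs the same assumption. It fails for an arbitrary Radon measure.
\end{enumerate}
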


\begin{theorem}\label{th1SRAP}\cite{Che_2024.3} Let $\varphi \in L^{p}_{loc}(\mathbb T,\mathfrak
B)$ be a Lagrange stable function. The function $\varphi$ is
positively remotely $S^{p}$ almost periodic (respectively, both
positively and negatively remotely $S^{p}$ almost periodic if
$\mathbb T=\mathbb R$) if and only if its $\omega$-limit set
$\omega_{\varphi}$ (respectively, its dynamically limit set
$\Delta_{\varphi}:=\alpha_{\varphi}\bigcup \omega_{\varphi}$) is
$S^{p}$ equi-almost periodic.
\end{theorem}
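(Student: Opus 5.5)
The plan is to transfer the problem to the shift dynamical system $(L^{p}_{loc}(\mathbb T,\mathfrak B),\mathbb T,\sigma)$ of Lemma \ref{l1.5.2}. There the statement becomes an instance of the abstract characterizations already available: Lemma \ref{lRAP1} for the positive case and Theorem \ref{thTSR1} for the two-sided case. These are stated for an arbitrary dynamical system on a complete metric space, and $(L^{p}_{loc},d_p)$ is such a space. What must be supplied is a Stepanov analogue of Lemma \ref{lAPF3.1}: the motion $\sigma(t,\varphi)$ is (two-sided) remotely almost periodic with respect to $d_p$ if and only if $\varphi$ is (positively, resp. positively and negatively) $S^{p}$ remotely almost periodic. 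Alongside it we need the analogous fact that a compact invariant set $M\subset L^{p}_{loc}$ is equi-almost periodic with respect to $d_p$ if and only if it is $S^{p}$ equi-almost periodic, i.e. the $[0,1]$-window integrals are uniformly small.

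The key step is a two-sided comparison between $d_p$ and the window quantity
\[
W(\psi,\chi):=\sup_{t\in J}\int_{0}^{1}|\psi(t+s)-\chi(t+s)|^{p}d\mu(s),
\]
where $J$ is the relevant range of $t$ (either $t\ge L$, or all of $\mathbb T$). I will apply it with $\psi=\varphi^{\tau}$ and $\chi=\varphi$, or with $\psi=p^{\tau}$ and $\chi=p$ for $p\in\omega_\varphi$.

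\emph{From Stepanov to $d_p$.} Suppose $\int_0^1|\varphi(t+\tau+s)-\varphi(t+s)|^pd\mu(s)<\varepsilon^p$ for all $t\ge L$. Given $\ell>0$, cover $\mathbb T_\ell$ by at most $2\lceil\ell\rceil$ unit intervals. Then for all $t\ge L+\ell$ we get $\|\sigma(t+\tau,\varphi)-\sigma(t,\varphi)\|_{\ell,p}^p\le 2\lceil\ell\rceil\varepsilon^p$. Now fix $\eta>0$, take $\ell=1/\eta$, and choose $\varepsilon$ so that $2\lceil\ell\rceil\varepsilon^p<\eta^p$. Since $\ell\mapsto\|\cdot\|_{\ell,p}$ is monotone, for $\ell'\le\ell$ the minimum in $d_p$ is at most $\eta$, and for $\ell'>\ell$ it is at most $1/\ell'<\eta$. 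Hence $d_p(\sigma(t+\tau,\varphi),\sigma(t,\varphi))\le\eta$ for $t\ge L+1/\eta$.

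\emph{From $d_p$ to Stepanov.} If $d_p(\psi,\chi)<\eta<1$, then taking $\ell=1$ in the supremum shows $\min\{\|\psi-\chi\|_{1,p},1\}<\eta$, so the window integral over $[0,1]\subset\mathbb T_1$ is $<\eta^p$. Applying this to $\psi=\sigma(t+\tau,\varphi)$ and $\chi=\sigma(t,\varphi)$ transfers the estimate back.

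Both directions keep the same set of admissible $\tau$ and only shift the threshold $L$. Therefore the relatively dense sets $\mathcal P(\varepsilon,\cdot)$ correspond, and the two notions of remote almost periodicity coincide. The same computation with "for all $t\in\mathbb T$" and uniformly in $p\in M$ gives the equivalence of the two notions of equi-almost periodicity. In that case invariance of $M$ lets me shift $t$ freely, so the threshold shift does no harm. Finally, Lagrange stability of $\varphi$ in $L^{p}_{loc}$ makes $\omega_\varphi$ (and $\alpha_\varphi$) nonempty, compact and invariant, so Lemma \ref{lRAP1}, respectively Theorem \ref{thTSR1}, applies and yields the theorem.

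The step I expect to be the main obstacle is bookkeeping in the two-sided case $\mathbb T=\mathbb R$. There $\mathbb T_\ell=[-\ell,\ell]$ reaches backward, and the condition is $|t|\ge L$ rather than $t\ge L$. I would handle this by requiring $|t|\ge L+\ell$, so that every unit window $[t+j,t+j+1]$ used in the covering of $t+\mathbb T_\ell$ lies in the region where the Stepanov estimate holds, on either side of the axis. I also need the comparison to hold uniformly over the points of $\omega_\varphi$ (respectively $\Delta_\varphi$) for equi-almost periodicity. This should be automatic, since the constants in the comparison depend only on $\ell$ and $\varepsilon$ and not on the function.
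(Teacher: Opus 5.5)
Your proposal is correct and takes essentially the paper's route: the statement is only cited here from \cite{Che_2024.3}, and its continuous counterpart Theorem \ref{th1RAP} is proved exactly as you propose, by passing to the shift system and invoking Lemma \ref{lRAP1} and Theorem \ref{thTSR1}; the function-versus-motion equivalence (Lemma \ref{lAPF3.1}) is supplied in the Stepanov setting by the unit-window versus long-window comparison of Lemmas \ref{lS1} and \ref{lAPF3}, which is your covering argument. Only cosmetic bookkeeping needs tidying: if you cover by windows $[t+j,t+j+1]$ with $j\in\mathbb Z$, use the threshold $L+\lceil\ell\rceil$ (or start the windows at $t-\ell$), and since the argument gives $d_p\le\eta$ rather than $<\eta$, run it with $\eta/2$.
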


\subsubsection{Some properties of $S^{p}$ remotely almost periodic
functions}\label{Sec2.3.3}

Denote by
\begin{enumerate}
\item $S^{p}_{b}(\mathbb T,\mathfrak B):=\{\varphi \in
L^{p}_{loc}(\mathbb T,\mathfrak B)|\ \sup\limits_{t\in \mathbb
T}(\int_{t}^{t+1}|\varphi(s)|^{p}d\mu(s))^{1/p}<+\infty\}$;
 \item $S^{p}RAS(\mathbb T,\mathfrak B)$ the family of all
remotely stationary functions $\varphi \in S^{p}_{b}(\mathbb
T,\mathfrak B)$; \item $S^{p}RAP_{\tau}(\mathbb T,\mathfrak B)$
the family of all remotely $\tau$-periodic functions $\varphi \in
S^{p}_{b}(\mathbb T,\mathfrak B)$; \item $S^{p}RAP(\mathbb
T,\mathfrak B)$ the family of all remotely almost periodic
functions $\varphi \in S^{p}_{b}(\mathbb T,\mathfrak B)$.
\end{enumerate}

\begin{remark}\label{remS1} Note that $S^{p}_{b}(\mathbb T,\mathfrak
B)$ is a Banach space with the norm
\begin{equation}\label{eqS_001}
\|\varphi\|_{S^{p}}:=\sup\limits_{t\in \mathbb
T}\big{(}\int_{t}^{t+1}|\varphi(s)|^{p}d\mu(s)\big{)}^{1/p}.\nonumber
\end{equation}
\end{remark}

\begin{theorem}\label{thRAPS_G} \cite{Che_2024.3} The following statements hold:
\begin{enumerate}
\item[1.] Let $h\in \mathbb T$ and $\varphi \in S^{p}RAP(\mathbb
T,\mathfrak B)$ then $\varphi^{h}\in S^{p}RAP(\mathbb T,\mathfrak
B)$; \item[2.] $S^{p}RAP(\mathbb T,\mathfrak B)$ is a closed
subspace of the space $S^{p}_{b}(\mathbb T,\mathfrak B)$;
\item[3.] For every $f\in S^{p}RAP(\mathbb T,\mathfrak B)$ we have
$H(f)\subset S^{p}RAP(\mathbb T,\mathfrak B)$, where by bar the
closure in the space $S^{p}_{b}(\mathbb T,\mathfrak B)$ is
denoted; \item[4.] Assume that the functions $f_1,\ldots,f_{m}\in
L^{p}_{loc}(\mathbb T,\mathfrak B)$ are Lagrange stable and
$f_{i}\in S^{p}RAP(\mathbb T,\mathfrak B_{i})$ ($i=1,\ldots,m$),
then
\begin{enumerate} \item the
function $F:=(f_1,\ldots,f_{m})\in L^{p}_{loc}(\mathbb T,\mathfrak
B)$ is Lagrange stable, where $\mathfrak B:=\mathfrak B_{1}\times
\ldots \times \mathfrak B_{m}$; \item the function $F\in
S^{p}RAP(\mathbb T,\mathfrak B)$.
\end{enumerate}
\item[5.] Assume that the functions $f_i\in S^{p}RAP(\mathbb
T,\mathfrak B)$ ($i=1,\ldots,m$) and Lagrange stable in the shift
dynamical system $(L^{p}_{loc}(\mathbb T,\mathfrak B),\mathbb
T,\sigma)$ then
\begin{enumerate}
\item the function $F:=f_1+\ldots +f_m$ is also Lagrange stable
and \item $F\in S^{p}RAP(\mathbb T,\mathfrak B)$.
\end{enumerate}
\item[6.] Assume that the following conditions are fulfilled:
\begin{enumerate}
\item the functions $f_i\in S^{p}RAP(\mathbb T,\mathfrak B)$
($i=1,2,\ldots $) and they are Lagrange stable in the shift
dynamical system $(L^{p}_{loc}(\mathbb T,\mathfrak B),\mathbb
T,\sigma)$; \item the functional series
\begin{equation}\label{eq6RAP}
f_1 +f_2+\ldots +f_m +\ldots \nonumber
\end{equation}
converges in the space $S^{p}_{b}(\mathbb T,\mathfrak B)$ and $F$
is its some.
\end{enumerate}

Then the function
$$
F=\sum_{i=1}^{\infty}f_{i}
$$
is Lagrange stable
and $F\in S^{p}RAP(\mathbb T,\mathfrak B)$. \item[7.] Let $\alpha,
\beta \in P$ and $f_1,f_2\in S^{p}RAP(\mathbb T,\mathfrak B)$. If
the functions $f_1$ and $f_2$ are Lagrange stable in the shift
dynamical system $(L^{p}_{loc}(\mathbb T,\mathfrak B),\mathbb
T,\sigma)$, then
\begin{enumerate}
\item the function $F:=\alpha f_1 +\beta f_2$ is Lagrange stable;
\item $F\in S^{p}RAP(\mathbb T,\mathfrak B)$.
\end{enumerate}
\item[8.] Assume that the following conditions hold:
\begin{enumerate}
\item $p,q>0$ and $p^{-1}+q^{-1}=1$; \item the function $\varphi
\in S^{q}_{b}(\mathbb T,P)$ (a scalar function) is Lagrange stable
in the shift dynamical system $(L^{q}_{loc}(\mathbb T,P),\mathbb
T,\sigma)$ (respectively, $f\in S^{p}_{b}(\mathbb T,\mathfrak B)$
is Lagrange stable in the shift dynamical system
$(L^{p}_{loc}(\mathbb T,\mathfrak B),\mathbb T,\sigma)$); \item
$\varphi \in S^{q}RAP(\mathbb T,P)$ and $f\in S^{p}RAP(\mathbb
T,\mathfrak B)$.
\end{enumerate}

Then
\begin{enumerate}
\item the function $F =\varphi f$ is Lagrange stable in the shift
dynamical system $(L^{1}_{loc}(\mathbb T,\mathfrak B),\mathbb
T,\sigma)$ and \item $F\in S^{1}RAP(\mathbb T,\mathfrak B)$.
\end{enumerate}
\end{enumerate}
\end{theorem}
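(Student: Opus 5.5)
The plan is to prove the eight items in two groups. Items 1--3 and 6 follow from the definition and the Banach space structure of $S^{p}_{b}(\mathbb T,\mathfrak B)$. Items 4, 5, 7 and 8 go through the dynamical characterization of Theorem \ref{th1SRAP}. That theorem says a Lagrange stable $\varphi$ is $S^{p}$ remotely almost periodic iff $\omega_{\varphi}$ is $S^{p}$ equi-almost periodic. It is combined with Lemma \ref{lEAP2} on products of equi-almost periodic compact invariant sets.

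\textbf{Items 1--3 and 6.} For item 1 I keep $\mathcal P(\varepsilon,\varphi^{h}):=\mathcal P(\varepsilon,\varphi)$ and set $L(\varepsilon,\varphi^{h},\tau):=L(\varepsilon,\varphi,\tau)$. This works because the defining integral for $\varphi^{h}$ at time $t$ is the one for $\varphi$ at time $t+h\ge t$.

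For item 2, let $f_n\to f$ in $\|\cdot\|_{S^{p}}$. I fix $n$ with $\|f_n-f\|_{S^{p}}<\varepsilon/3$ and take $\mathcal P(\varepsilon,f):=\mathcal P(\varepsilon/3,f_n)$. Minkowski's inequality on $[t,t+1]$ and on $[t+\tau,t+\tau+1]$ then gives the estimate $<\varepsilon$ for $t\ge L(\varepsilon/3,f_n,\tau)$. Item 3 is immediate from items 1 and 2, since $H(f)$ is the closure of the translates $f^{h}$.

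For item 6, the partial sums lie in $S^{p}RAP(\mathbb T,\mathfrak B)$ and are Lagrange stable by item 5. Hence $F\in S^{p}RAP$ by item 2. For Lagrange stability of $F$, note that $\|F-S_m\|_{S^{p}}<\delta$ gives $\|F^{h}-S_m^{h}\|_{\ell,p}\le (\ell+1)^{1/p}\delta$ uniformly in $h$. So a finite $\delta$-net of the precompact set $\{S_m^{h}\}$ yields a finite net for $\{F^{h}\}$ in $d_p$. Thus $\Sigma^{+}_{F}$ is totally bounded, hence precompact, since $L^{p}_{loc}$ is complete.

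\textbf{Items 4, 5, 7 and 8.} The difficulty is to find a \emph{common} relatively dense set of $\varepsilon$-almost periods for several functions, so I argue through limit sets.

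For item 4, $L^{p}_{loc}(\mathbb T,\mathfrak B_1\times\dots\times\mathfrak B_m)$ is topologically isomorphic, equivariantly for the shifts, to the product $\prod_i L^{p}_{loc}(\mathbb T,\mathfrak B_i)$. Hence $\Sigma^{+}_{F}\subseteq\prod_i\Sigma^{+}_{f_i}$ is precompact. Moreover $\omega_F$ is a compact invariant subset of $\omega_{f_1}\times\dots\times\omega_{f_m}$. Each $\omega_{f_i}$ is equi-almost periodic by Theorem \ref{th1SRAP}, so the product is equi-almost periodic by Lemma \ref{lEAP2}. Any invariant subset of an equi-almost periodic set is trivially equi-almost periodic, and Theorem \ref{th1SRAP} gives $F\in S^{p}RAP$.

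Items 5 and 7 then follow by composing with the continuous linear map $\Lambda(g_1,\dots,g_m)=\sum\alpha_i g_i$. This map commutes with the shifts, so $\Sigma^{+}_{F}=\Lambda(\Sigma^{+}_{(f_1,\dots,f_m)})$ is precompact. Also $\omega_F\subseteq\Lambda(\omega_{(f_1,\dots,f_m)})$. For item 8 I use the same scheme with the bilinear map $(\varphi,f)\mapsto\varphi f$ from $L^{q}_{loc}\times L^{p}_{loc}$ to $L^{1}_{loc}$. By Hölder's inequality on each $\mathbb T_{\ell}$ this map is continuous, and it is shift-equivariant.

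\textbf{Main obstacle.} The key auxiliary fact I must verify is the following. Let $\Lambda$ be continuous and equivariant, and let $K$ be compact, invariant and equi-almost periodic. Then $\Lambda(K)$ is equi-almost periodic.

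Equi-almost periodicity here is measured in the metric $d_p$ along whole orbits. Since $\Lambda$ is uniformly continuous on the compact set $K$, for each $\varepsilon$ there is $\delta$ such that $d_p(k_1,k_2)<\delta$ implies $d_p(\Lambda k_1,\Lambda k_2)<\varepsilon$. I would then take $\mathcal P(\varepsilon,\Lambda(K)):=\mathcal P(\delta,K)$, applied to the points $\sigma(t,k)\in K$.

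The subtle point is making this compatible with the ``$S^{p}$ equi-almost periodic'' notion used in Theorem \ref{th1SRAP}, which is measured through $\int_0^1$ rather than $d_p$. I would first reduce to $d_p$. By the argument of Lemma \ref{lAPF3}, smallness of $\int_{0}^{1}$ uniformly along the orbit implies smallness of $\int_{\mathbb T_{\ell}}$ uniformly along the orbit for each fixed $\ell$, and the converse is trivial. With that reduction the two notions agree, and the argument above goes through.
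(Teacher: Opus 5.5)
The paper gives no proof of this theorem; it is quoted from \cite{Che_2024.3}. So there is no in-paper argument to compare with, and I judge your proposal on its own. It is sound.

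For items 4, 5, 7 and 8 the real difficulty is finding \emph{common} remote almost periods. Routing through Theorem~\ref{th1SRAP}, Lemma~\ref{lEAP2} and your transfer fact handles exactly that. The transfer fact says a continuous shift-equivariant map is uniformly continuous on compacta, so it carries compact invariant equi-almost periodic sets to equi-almost periodic sets.

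Working with $d_p$ rather than the $S^{p}$ sup-distance is the right choice, because uniform continuity comes from compactness in $d_p$. Passing between the two notions is a direct covering argument, not Lemma~\ref{lAPF3}, which concerns $t\to+\infty$. Here the estimate holds for every $t\in\mathbb T$ on an invariant set, so $\mathbb T_{\ell}$ is covered by at most $2\ell+1$ unit windows. Also $d_p(g,0)\le\max\{\eta,\|g\|_{1/\eta,p}\}$, and $d_p(g,0)<\eta<1$ forces $\|g\|_{1,p}<\eta$.

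Some small repairs are needed.
In item 1, your justification ``$t+h\ge t$'' fails for $h<0$ when $\mathbb T=\mathbb R$. Take $L(\varepsilon,\varphi^{h},\tau):=L(\varepsilon,\varphi,\tau)+|h|$ instead.

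Lemma~\ref{lEAP2} is stated for subsets of a single phase space. You apply it to $\omega_{f_i}\subset L^{p}_{loc}(\mathbb T,\mathfrak B_i)$, and to $\omega_{\varphi}\times\omega_{f}$ with factors in $L^{q}_{loc}$ and $L^{p}_{loc}$. Either note that its proof never uses a common space, or pass to a disjoint union.

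In item 6 the constant is $(2\ell+1)^{1/p}$ when $\mathbb T=\mathbb R$, and you should write $\Sigma_F$ rather than $\Sigma^{+}_F$. Neither change affects the argument.

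In item 2 you prove only closedness. That is the correct reading of ``closed subspace'': without Lagrange stability, $S^{p}RAP(\mathbb T,\mathfrak B)$ is not closed under addition, which is why items 5 and 7 assume it. For example, take $f_1=\sum_{n\ge 1}n^{1/p}\chi_{[n,n+1/n)}$ and $f_2=\sum_{n\ge1}n^{1/p}\chi_{[n\sqrt2,\,n\sqrt2+1/n)}$. Both belong to $S^{p}RAP(\mathbb R_{+},\mathbb R)$, with every $\tau\in\mathbb N$, respectively every $\tau\in\sqrt2\,\mathbb N$, a remote period. However, for $\varepsilon<1$ the only $\varepsilon$-remote almost period of $f_1+f_2$ is $0$.
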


\begin{remark}\label{remS9RAP} Using the arguments above we can
show that Theorem \ref{thRAPS_G} remains true if we replace
everywhere the space $S^{p}RAP(\mathbb T,\mathfrak B)$ by
$S^{p}RAS(\mathbb T,\mathfrak B)$ (respectively,
$S^{p}RAP_{\tau}(\mathbb T,\mathfrak B)$).
\end{remark}

\section{Integration of $S^{p}$ remotely almost periodic
functions}\label{Sec3}

Consider a linear nonhomogeneous differential equation
\begin{equation}\label{eqISP1}
 x'=Ax+f(t)
\end{equation}
on the space $\mathfrak B$, where $f\in L_{loc}^{p}(\mathbb
T,\mathfrak B)$ and $A$ is an infinitesimal generator which
generates a $C_0$-semigroup $\{U(t)\}_{t\ge 0}$ acting on
$\mathfrak B$.

\begin{lemma}\label{lB1}\cite[Ch.VI]{Che_2020} Let $\{U(t)\}_{t\ge 0}$ be a
$C_0$-semigroup acting on the space $\mathfrak B$. Then for every
$L>0$ there exists a  positive constant $C=C(L)$ such that
$\|U(t)\|\le C$ for all $t\in [0,L]$.
\end{lemma}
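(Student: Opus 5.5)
The statement is the standard local boundedness of a $C_0$-semigroup, and I would prove it by the usual route through the uniform boundedness principle. The plan has two steps: first get a bound on a short interval near $0$ by contradiction, then spread that bound to all of $[0,L]$ using the semigroup property.

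\emph{Step 1 (bound near $0$).} I claim there exist $\delta>0$ and $M\ge 1$ such that $\|U(t)\|\le M$ for all $t\in[0,\delta]$. Suppose not. Then there is a sequence $t_n\in(0,1/n]$ with $\|U(t_n)\|\ge n$. Fix $x\in\mathfrak B$. By strong continuity of the semigroup at $0$, $U(t_n)x\to x$, so the sequence $\{U(t_n)x\}$ is bounded in $\mathfrak B$. This holds for every $x$, so the Banach--Steinhaus theorem gives $\sup_n\|U(t_n)\|<+\infty$. That contradicts $\|U(t_n)\|\ge n$.

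\emph{Step 2 (extension to $[0,L]$).} Let $t\in[0,L]$ and write $t=k\delta+r$ with $k\in\mathbb Z_{+}$, $k\le L/\delta$ and $0\le r<\delta$. The semigroup property gives $U(t)=U(\delta)^{k}U(r)$. Hence
\begin{equation*}
\|U(t)\|\le M^{k+1}\le M^{L/\delta+1}=:C(L).
\end{equation*}
The right-hand side does not depend on $t\in[0,L]$, which proves the lemma.

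The only substantive point is Step 1: strong continuity gives only pointwise, not norm, convergence, so the uniform bound has to come from Banach--Steinhaus. Step 2 is routine bookkeeping. As a remark, the same argument yields the familiar exponential bound $\|U(t)\|\le Me^{\omega t}$ with $\omega=\delta^{-1}\ln M$, but that is not needed here.
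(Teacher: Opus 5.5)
Your proof is correct: Step 1 gets a bound near $0$ by Banach--Steinhaus using only strong continuity at $0^{+}$, and Step 2 extends it via $U(t)=U(\delta)^{k}U(r)$, where $M\ge 1$ justifies $M^{k+1}\le M^{L/\delta+1}$. This is the standard textbook proof of local boundedness of a $C_0$-semigroup; the paper gives no proof of its own and simply quotes the lemma from \cite[Ch.VI]{Che_2020}, so there is nothing further to compare.
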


Recall that a continuous function $u:\mathbb T \to \mathfrak B$ is
a weak (mild) solution of the equation (\ref{eqISP1}) if for every
$t_0\in \mathbb T$ we have
\begin{equation}\label{eqISP2}
u(t)=U(t-t_0)u(t_0)+\int_{t_0}^{t}U(t-s)f(s)ds \nonumber
\end{equation}
for all $t\ge t_0$.

\begin{lemma}\label{lISP1} Let $Q$ be a compact subset from $\mathfrak
B$, $\{\varphi_n\}$ be a sequence from $C(\mathbb T,Q)$ and
$\{f_n\}$ be a sequence from $L^{p}_{loc}(\mathbb T,\mathfrak B)$.
Assume that the function $\varphi_{n}$ is a solution of the equation
\begin{equation}\label{eqISP3}
x'=Ax+f_{n}(t)\nonumber
\end{equation}
for all $n\in \mathbb N$.

Then
\begin{enumerate}
\item if the sequence $\{f_n\}$ is precompact in
$L^{p}_{loc}(\mathbb T,\mathfrak B)$, then the sequence
$\{\varphi_n\}$ is precompact in $C( \mathbb T,Q)$; \item if the
sequence $\{f_n\}$ converges to $f$ as $n\to \infty$ in the space
$L^{p}_{loc}(\mathbb T,\mathfrak B)$, then
\begin{enumerate}
\item the sequence $\{\varphi_n\}$ is precompact in $C(\mathbb
T,Q)$ and \item every limiting function $\varphi$ of the sequence
$\{\varphi_{n}\}$ is a solution of the equation (\ref{eqISP1}).
\end{enumerate}
\item if the sequence $\{f_n\}$ converges $f$ as $n\to \infty$ in
the space $L^{p}_{loc}(\mathfrak B)$, $t_0\in \mathbb T$ such that
$\lim\limits_{n\to \infty}\varphi_{n}(t_0)=x_0$ and $\varphi$ is a
solution of (\ref{eqISP1}) with the initial data
$\varphi(t_0)=x_0$, then the sequence $\{\varphi_n\}$ converges to
$\varphi$ as $n\to \infty$ in the space $C(\mathbb T,Q)$.
\end{enumerate}
\end{lemma}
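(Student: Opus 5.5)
We prove the three items in order, using the mild-solution formula on finite intervals together with Lemma \ref{lB1}. Fix $L>0$, so that $\|U(t)\|\le C(L)$ for $t\in[0,2L]$. If $\mathbb T=\mathbb R$, work on $[-L,L]$ with base point $t_0=-L$; if $\mathbb T=\mathbb R_+$, use $t_0=0$. For $t\in[t_0,t_0+2L]$ we have
\begin{equation*}
\varphi_n(t)=U(t-t_0)\varphi_n(t_0)+\int_{t_0}^{t}U(t-s)f_n(s)\,ds .
\end{equation*}
Since everything lives on a compact window, it suffices by Remark \ref{remD1} to prove uniform-on-compacts statements on each such window.

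\textbf{Item 1.} We apply Arzel\`a--Ascoli in $C([t_0,t_0+2L],\mathfrak B)$. Pointwise relative compactness is automatic because every $\varphi_n(t)$ lies in the compact set $Q$. Equicontinuity is the key point. For $t_0\le t<t'\le t_0+2L$ write
\begin{equation*}
\varphi_n(t')-\varphi_n(t)=\bigl(U(t'-t)-I\bigr)\varphi_n(t)+\int_t^{t'}U(t'-s)f_n(s)\,ds ,
\end{equation*}
using the semigroup property $\varphi_n(t')=U(t'-t)\varphi_n(t)+\int_t^{t'}U(t'-s)f_n(s)ds$.
\begin{itemize}
\item The first term is small uniformly in $n$ and $t$. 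Strong continuity of $U$ is uniform on the compact set $Q$, i.e. $\sup_{x\in Q}|U(h)x-x|\to0$ as $h\to0^+$, because $\|U(h)\|$ is bounded.
\item The second term is bounded by $C(L)\int_t^{t'}|f_n(s)|\,ds$. Precompactness of $\{f_n\}$ in $L^p_{loc}$ implies uniform integrability of $\{|f_n|\}$ on $[t_0,t_0+2L]$: take a finite $\varepsilon$-net in $L^p([t_0,t_0+2L])$ and combine it with H\"older. This gives $\sup_n\int_t^{t'}|f_n|\to0$ as $t'-t\to0$.
\end{itemize}
Arzel\`a--Ascoli and a diagonal argument over $L=1,2,\dots$ then yield a subsequence converging in $C(\mathbb T,Q)$. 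The limit takes values in $Q$ because $Q$ is closed.

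\textbf{Item 2.} Part (a) follows from item 1, since a convergent sequence is precompact. For (b), let $\varphi_{n_k}\to\varphi$ uniformly on compacts, and fix any $t_0'\le t$. The identity $\varphi_{n_k}(t)=U(t-t_0')\varphi_{n_k}(t_0')+\int_{t_0'}^tU(t-s)f_{n_k}(s)ds$ passes to the limit:
\begin{itemize}
\item $U(t-t_0')\varphi_{n_k}(t_0')\to U(t-t_0')\varphi(t_0')$ because $U(t-t_0')$ is a bounded operator;
\item the integral satisfies $\bigl|\int_{t_0'}^t U(t-s)(f_{n_k}-f)\,ds\bigr|\le C\,(t-t_0')^{1-1/p}\|f_{n_k}-f\|_{L^p}\to0$ by H\"older.
\end{itemize}
Hence $\varphi$ is a mild solution of (\ref{eqISP1}).

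\textbf{Item 3.} By item 2 every subsequence of $\{\varphi_n\}$ has a further subsequence converging to some mild solution $\psi$, with $\psi(t_0)=\lim\varphi_n(t_0)=x_0$. Mild solutions are uniquely determined forward by their initial value via the variation-of-constants formula, so $\psi=\varphi$ on $t\ge t_0$. The whole sequence therefore converges to $\varphi$ there.

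The main obstacle is the behaviour for $t<t_0$ when $\mathbb T=\mathbb R$: $U$ is only a semigroup, so backward uniqueness may fail. We use the hypothesis that $\varphi$ is \emph{the} solution with $\varphi(t_0)=x_0$, interpreted as the unique one in $C(\mathbb T,Q)$. Two limits of subsequences, both with values in $Q$ and agreeing at $t_0$, must then coincide; if needed, we simply state the convergence on $[t_0,+\infty)$. The other delicate point is the uniform integrability extracted from $L^p_{loc}$ precompactness when $p=1$; the $\varepsilon$-net argument handles this case as well.
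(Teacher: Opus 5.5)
Your proof is correct, but item 1 takes a different route from the paper's. The paper does not use Arzel\`a--Ascoli. It first passes to a subsequence along which $f_n$ converges in $L^p_{loc}$ and $\varphi_n(t_0)$ converges, so compactness of $Q$ is used only at the base point. It then reads off from the variation-of-constants formula the continuous-dependence bound
\[
\max_{|t|\le L}|\varphi_k(t)-\varphi_m(t)|\le C(2L)\Bigl(|\varphi_k(t_0)-\varphi_m(t_0)|+\int_{|s|\le 2L}|f_k(s)-f_m(s)|\,ds\Bigr),
\]
which makes the subsequence Cauchy in $C(\mathbb T,\mathfrak B)$. This needs no equicontinuity, no uniform strong continuity of $U$ on $Q$, and no uniform integrability. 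The same bound applied to $\varphi_n-\varphi$ would also give item 3 on $[t_0,+\infty)$ without any uniqueness argument.

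Your route costs more analysis but is more general. It shows item 1 only needs uniform integrability of $\{|f_n|\}$ on compact windows (for instance boundedness in $L^p_{loc}$ for some $p>1$), not precompactness. Items 2 and 3 you treat essentially as the paper does: pass to the limit in the mild-solution formula, then use subsequences plus uniqueness.

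In two places you are more careful than the paper:
\begin{enumerate}
\item The paper's bound over $|t|\le L$ from one fixed $t_0$ is only justified for $t\ge t_0$. For $\mathbb T=\mathbb R$, your choice of base point $-L$ with a diagonal extraction is what actually makes the argument work.
\item In item 3 the paper asserts that $\varphi$ and $\widetilde{\varphi}$ coincide, tacitly using backward uniqueness. Your caveat is warranted, because item 3 as literally stated can fail for $\mathbb T=\mathbb R$. Take the left-shift semigroup $U(t)g=g(\cdot+t)$ on $L^2(0,\infty)$ and $f_n=f=0$. Then $\varphi_n\equiv 0$ and $\varphi(t)=\chi_{[-1,0]}(\cdot+t)$ are both mild solutions on $\mathbb R$ vanishing at $t_0=0$, yet $\varphi_n\not\to\varphi$.
\end{enumerate}
This does not affect the paper's applications, where $A=0$ and $U(t)=I$, so uniqueness holds in both time directions.
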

\begin{proof}
Assume that the sequence $\{f_n\}$ is precompact in the space
$L^{p}_{loc}(\mathbb T,\mathfrak B)$. Without loss of generality
we can suppose that the sequence $\{f_n\}$ converges in
$L^{p}_{loc}(\mathbb T,\mathfrak B)$. Then we have
\begin{equation}\label{eqISP4}
\lim\limits_{k,m\to \infty}\int\limits_{|t|\le
L}|f_k(t)-f_{m}(t)|^{\alpha}d t=0
\end{equation}
for every $L>0$ and $1\le \alpha \le p$. Since $\varphi_n(t_0)\in Q$
(for all $n\in \mathbb N$) and $Q$ is a compact subset of
$\mathfrak B$ then we can assume that the sequence
$\{\varphi_n(t_0)\}$ also converges and, consequently,
\begin{equation}\label{eqISP5}
\lim\limits_{k,m\to \infty}|\varphi_k(t_0)-\varphi_{m}(t_0)|=0.
\end{equation}
Note that
\begin{equation}\label{eqB1}
\varphi_{k}(t)-\varphi_{m}(t)=U(t-t_0)(\varphi_{k}(t_0)-\varphi_{m}(t_0))+\int_{t_0}^{t}U(t-s)(\varphi_{k}(s)-\varphi_{m}(s))ds
\end{equation}
for all $k,m\in \mathbb N$. From (\ref{eqB1}) taking into account
Lemma \ref{lB1} we obtain
$$
\max\limits_{|t|\le L}|\varphi_{k}(t)-\varphi_{m}(t)|\le
$$
\begin{equation}\label{eqISP6}
\big{(}|\varphi_{k}(t_0)-\varphi_{m}(t_0)|+\int\limits_{|t|\le
2L}|f_{k}(s)-f_{m}(s)|ds\big{)}C(2L)
\end{equation}
for every $L>0$. Passing to the limit in (\ref{eqISP6}) as $k,m\to
\infty$ and taking into account (\ref{eqISP4}) and (\ref{eqISP5}) we
obtain
\begin{equation}\label{eqISP7}
\lim\limits_{k,m\to \infty}\max\limits_{|t|\le
L}|\varphi_{k}(t)-\varphi_{m}(t)|=0
\end{equation}
for every $L>0$. Since the space $C(\mathbb T,\mathfrak B)$ is
complete then from the relation (\ref{eqISP7}) follows the
convergence of the sequence $\{\varphi_{n}\}$ in the space
$C(\mathbb T,\mathfrak B)$.

Let now $\lim\limits_{n\to \infty}f_n=f$ in the space
$L^{p}_{loc}(\mathbb T,\mathfrak B)$. According to the first
statement of Lemma the sequence $\{\varphi_{n}\}$ is precompact in
$C(\mathbb T,\mathfrak B)$. Let $\widetilde{\varphi}$ be a limiting
function for the sequence $\{\varphi_{n}\}$ and
$\{\varphi_{n_k}\}$ be a subsequence of $\{\varphi_{n}\}$ such
that $\lim\limits_{k\to \infty}\varphi_{n_k}=\widetilde{\varphi}$. We
will show that $\widetilde{\varphi}$ is a solution of the equation
(\ref{eqISP1}). Assume that $t>t_0$. We will choose $L$
sufficiently large (so that $t,t_0\in [-L,L]$). Then we have
$$
|\widetilde{\varphi}(t)-U(t-t_0)\widetilde{\varphi}(t_0)-\int_{t_0}^{t}U(t-s)\widetilde{f}(s)ds|=
$$
$$
|\widetilde{\varphi}(t)-\varphi_{n_k}(t)+U(t-t_0)(\varphi_{n_k}(t_0)-\widetilde{\varphi}(t_0))+
$$
$$
\int_{t_0}^{t}U(t-s)(f_{n_k}(s)-\widetilde{f}(s))ds|\le
$$
$$
|\varphi_{n_k}(t)-\widetilde{\varphi}(t)|+C(2L)\big{(}|\varphi_{n_k}(t_0)-\widetilde{\varphi}(t_0)|
+ \int_{t_0}^{t}|f_{n_k}(s)-f(s)|ds\big{)}.
$$
Thus we have
\begin{equation}\label{eqISP10}
|\widetilde{\varphi}(t)-U(t-t_0)\widetilde{\varphi}(t_0)-\int_{t_0}^{t}U(t-s)\widetilde{f}(s)ds|
\le
\end{equation}
$$
\max\limits_{|t|\le
L}|\varphi_{n_k}(t)-\widetilde{\varphi}(t)|+C(2L)\big{(}|\varphi_{n_k}(t_0)-\widetilde{\varphi}(t_0)|
+ \int\limits_{|t|\le 2L}|f_{n_k}(s)-f(s)|ds\big{)}.
$$
Since $f_n\to f$ (in $L^{p}_{loc}(\mathbb T,\mathfrak B)$) and
$\varphi_{n_k}\to \widetilde{\varphi}$ (in $C(\mathbb T,Q)$) then
we obtain
\begin{equation}\label{eqISP9}
\lim\limits_{k\to \infty}\int\limits_{|t|\le
L}|f_{n_k}(s)-f(s)|ds=0 \ \mbox{and}\ \lim\limits_{k\to
\infty}\max\limits_{|t|\le
L}|\varphi_{n_k}(t)-\widetilde{\varphi}(t)|=0
\end{equation}
for every $L>0$.

Passing to the limit in (\ref{eqISP10}) and taking into account
(\ref{eqISP9}) we have
\begin{equation}\label{eqISP11}
\widetilde{\varphi}(t)=U(t-t_0)\widetilde{\varphi}(t_0)+
\int_{t_0}^{t}U(t-s)\widetilde{f}(s)ds,\nonumber
\end{equation}
i.e., the function $\widetilde{\varphi}$ is a solution of the
equation (\ref{eqISP1}).

Let $\lim\limits_{n\to \infty}\varphi_{n}(t_0)=x_0$ and $\varphi$
be a solution of the equation (\ref{eqISP1}) with the initial data
$\varphi(t_0)=x_0$. Since the sequence $\{\varphi_n\}$ is
precompact then it contains a convergent subsequence
$\{\varphi_{n_k}\}$. Let $\widetilde{\varphi}:=\lim\limits_{k\to
\infty}\varphi_{n_k}$ then
$\widetilde{\varphi}(t_0)=\lim\limits_{k\to
\infty}\varphi_{n_k}(t_0)=x_0$. From the last relation follows
that the solutions $\varphi$ and $\widetilde{\varphi}$ coincide.
Additionally, from the above follows that the limit of all
subsequence of the sequence $\{\varphi_n\}$ coincides with
$\varphi$. This means that the sequence $\{\varphi_n\}$ converges
to $\varphi$ (in the space $C(\mathbb T,Q)$) as $n\to \infty$.
Lemma is completely proved.
\end{proof}

\begin{coro}\label{corISP1} Assume that the function $f\in
L^{p}_{loc}(\mathbb T, \mathfrak B)$ is Lagrange stable
(respectively, positively Lagrange stable) in $(L^{p}_{loc}(\mathbb
T,\mathfrak B),\mathbb T,\sigma)$. Then every compact solution
$\varphi$ of (\ref{eqISP1}) is Lagrange stable (respectively,
positively Lagrange stable) in $(C(\mathbb T,\mathfrak B),\mathbb
T,\sigma)$.
\end{coro}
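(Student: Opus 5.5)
The plan is to apply Lemma \ref{lISP1}(1) to the family of translates of $\varphi$, which solve the equation with translated right-hand sides. Let $\varphi$ be a compact solution of (\ref{eqISP1}), so that $Q:=\overline{\varphi(\mathbb T)}$ is compact in $\mathfrak B$. We must show that $\Sigma_{\varphi}=\{\varphi^{h}\mid h\in\mathbb T\}$ is precompact in $C(\mathbb T,\mathfrak B)$; in the positive case we only need $\Sigma^{+}_{\varphi}=\{\varphi^{h}\mid h\ge 0\}$.

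\textbf{Step 1 (translates are solutions).} For every $h\in\mathbb T$, the translate $\varphi^{h}$ is a mild solution of $x'=Ax+f^{h}(t)$ and takes its values in $Q$. This follows from the mild formula by the change of variable $s\mapsto s+h$:
$$
\varphi(t+h)=U(t-t_0)\varphi(t_0+h)+\int_{t_0}^{t}U(t-s)f(s+h)\,ds .
$$

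\textbf{Step 2 (sequential precompactness).} Take any sequence $\{h_n\}\subset\mathbb T$, with $h_n\ge 0$ in the positive case. Set $\varphi_n:=\varphi^{h_n}\in C(\mathbb T,Q)$ and $f_n:=f^{h_n}$. By hypothesis $f$ is Lagrange stable (respectively, positively Lagrange stable) in $(L^{p}_{loc}(\mathbb T,\mathfrak B),\mathbb T,\sigma)$, so $\{f_n\}$ lies in the precompact set $\Sigma_f$ (respectively, $\Sigma_f^{+}$) and is therefore precompact in $L^{p}_{loc}(\mathbb T,\mathfrak B)$. Lemma \ref{lISP1}(1) now shows that $\{\varphi_n\}$ is precompact in $C(\mathbb T,Q)\subset C(\mathbb T,\mathfrak B)$.

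\textbf{Step 3 (conclusion).} Since $C(\mathbb T,\mathfrak B)$ is a complete metric space, and every sequence of translates has a convergent subsequence by Step 2, the closure of $\Sigma_\varphi$ (respectively, $\Sigma_\varphi^{+}$) is compact. This is exactly Lagrange stability (respectively, positive Lagrange stability) of $\varphi$.

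\textbf{Main obstacle.} The only delicate point is that the limits produced by Lemma \ref{lISP1} must stay in $C(\mathbb T,Q)$. They do, because $Q$ is closed and convergence in $C(\mathbb T,\mathfrak B)$ is uniform on compacts, so pointwise limits of $Q$-valued functions are $Q$-valued. When $\mathbb T=\mathbb R$ one should also check that Lemma \ref{lISP1} is applied with a base point $t_0$ at or below the left end of each interval considered, so that the mild formula runs forward in time. Taking $t_0=-L$ for each $L$ handles this, and the uniform-on-compacts estimate (\ref{eqISP6}) then holds on every $[-L,L]$.
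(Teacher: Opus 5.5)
Your proposal is correct and is essentially the paper's route: the paper gives no separate proof of this corollary and presents it as an immediate consequence of Lemma \ref{lISP1}(1), applied exactly as you do to the translates $\varphi^{h_n}$, which solve $x'=Ax+f^{h_n}(t)$ and take values in $Q$. Your remark on taking the base point $t_0=-L$ when $\mathbb T=\mathbb R$ correctly repairs a gap in the paper's proof of that lemma, which fixes a single $t_0$ but then estimates on all of $[-L,L]$; to make the repair work you need a diagonal extraction over $L\in\mathbb N$, so that $\varphi_n(-L)$ converges for every $L$ simultaneously.
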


\begin{coro}\label{corISP2} If the function $f\in
L^{p}_{loc}(\mathbb T, \mathfrak B)$ is Lagrange stable
(respectively, positively Lagrange stable) in $(L^{p}_{loc}(\mathbb
T,\mathfrak B),\mathbb T,\sigma)$, then every compact primitive
$\Phi$ of $\varphi$ is Lagrange stable (respectively, positively
Lagrange stable) in $(C(\mathbb T,\mathfrak B),\mathbb T,\sigma)$.
\end{coro}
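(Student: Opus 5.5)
The plan is to read Corollary \ref{corISP2} as the special case $A=0$ of Corollary \ref{corISP1}, and to obtain Corollary \ref{corISP1} from the first statement of Lemma \ref{lISP1}. With $A=0$ the semigroup is $U(t)=Id$. A primitive $\Phi(t)=\Phi(t_0)+\int_{t_0}^{t}f(s)ds$ is then exactly a mild solution of $x'=f(t)$. "Compact" means that $Q:=\overline{\Phi(\mathbb T)}$ is a compact subset of $\mathfrak B$.

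\textbf{Step 1 (reduce to Lemma \ref{lISP1}).} Let $\varphi$ be a compact solution of (\ref{eqISP1}) with range in the compact set $Q$. To prove Lagrange stability we must show that $\{\varphi^{h}\mid h\in\mathbb T\}$ (respectively, $h\ge 0$) is precompact in $C(\mathbb T,\mathfrak B)$. Take any sequence $\{h_n\}$ from $\mathbb T$ (respectively, from $\mathbb R_{+}$), and put $\varphi_n:=\varphi^{h_n}$ and $f_n:=f^{h_n}$.

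\textbf{Step 2 (shifts are again solutions).} By the change of variables $s\mapsto s+h_n$ in the variation of constants formula,
$$\varphi_n(t)=U(t-t_0)\varphi_n(t_0)+\int_{t_0}^{t}U(t-s)f_n(s)ds .$$
So $\varphi_n$ is a solution of $x'=Ax+f_n(t)$. Moreover $\varphi_n(\mathbb T)\subseteq Q$, so $\varphi_n\in C(\mathbb T,Q)$.

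\textbf{Step 3 (apply the lemma).} Lagrange stability (respectively, positive Lagrange stability) of $f$ in $(L^{p}_{loc}(\mathbb T,\mathfrak B),\mathbb T,\sigma)$ says that $\{f_n\}=\{\sigma(h_n,f)\}$ is precompact in $L^{p}_{loc}(\mathbb T,\mathfrak B)$. Statement (1) of Lemma \ref{lISP1} then gives that $\{\varphi_n\}$ is precompact in $C(\mathbb T,Q)$. Since $\{h_n\}$ was arbitrary, the trajectory (respectively, positive semi-trajectory) of $\varphi$ is precompact. This proves Corollary \ref{corISP1}, and Corollary \ref{corISP2} follows by taking $A=0$.

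\textbf{Expected obstacle.} The only delicate point is the shift invariance in Step 2 when $\mathbb T=\mathbb R_{+}$. There we use $h_n\ge 0$, so that $t+h_n\in\mathbb T$ and the mild-solution identity holds for every $t\ge t_0\ge 0$. For the one-sided case it is also only positive shifts that are needed, so this is consistent with the "respectively" part of both statements.

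A further point to check is the estimate used inside Lemma \ref{lISP1}. It bounds $\int_{|t|\le 2L}|f_k-f_m|ds$ by the local $L^{p}$ seminorms via H\"older's inequality on a bounded interval, so convergence in $L^{p}_{loc}$ implies local $L^{1}$ convergence. I take this as already established by Lemma \ref{lISP1}.
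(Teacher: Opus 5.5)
Your proposal is correct and follows the route the paper intends. The paper gives no separate proof: it treats Corollary \ref{corISP2} as the case $A=0$, $U(t)\equiv Id$, of Corollary \ref{corISP1}, which is read off from statement (1) of Lemma \ref{lISP1} applied to the shifted pairs $(\varphi^{h_n},f^{h_n})$, exactly as in your Steps 1--3. For $A=0$ the lemma's estimate is even easier, since $\Phi(t)=\Phi(t_0)+\int_{t_0}^{t}f(s)ds$ also holds for $t<t_0$, so when $\mathbb T=\mathbb R$ there is no need for care about times before the initial point.
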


\begin{definition}\label{defPS1} A function $\varphi \in L^{p}_{loc}(\mathbb T,\mathfrak
B)$ is said to be $S^{p}$ Poisson stable if the motion
$\sigma(t,\varphi)$ is Poisson stable in the shift dynamical
system $(L^{p}_{loc}(\mathbb T,\mathfrak B),\mathbb T,\sigma)$.
\end{definition}

\begin{theorem}\label{thP0} Let a function $\varphi \in L^{p}_{loc}(\mathbb T,\mathfrak
B)$ be $S^{p}$ Poisson stable and $F$ be a bounded primitive of
$\varphi$. Assume that $F$ is weakly precompact, i.e., the set
$\{F(t_n)\}$ is precompact for every $\{t_n\}\in \mathfrak
N_{\psi}$. Then the primitive $F$ of the function $\varphi$ is
comparable by the character of recurrence with $\varphi$, that is,
$\mathfrak N_{\varphi}\subseteq \mathfrak N_{F}$.
\end{theorem}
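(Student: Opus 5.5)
The plan is to show directly that for every $\{t_n\}\in\mathfrak N_{\varphi}$ we have $F^{t_n}\to F$ in $C(\mathbb T,\mathfrak B)$. (I read the hypothesis "$\{t_n\}\in\mathfrak N_{\psi}$" as $\{t_n\}\in\mathfrak N_{\varphi}$.) The starting point is the identity
$$
F^{t_n}(t)=F(t_n)+\int_{0}^{t}\varphi(t_n+s)\,ds ,\qquad t\in\mathbb T .
$$
Since $\varphi^{t_n}\to\varphi$ in $L^{p}_{loc}(\mathbb T,\mathfrak B)$, Hölder's inequality on $\mathbb T_{\ell}$ gives $\int_{\mathbb T_\ell}|\varphi(t_n+s)-\varphi(s)|ds\to 0$ for every $\ell>0$. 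Hence
$$
\max_{t\in \mathbb T_\ell}\Big|\int_0^t\varphi(t_n+s)\,ds-\int_0^t\varphi(s)\,ds\Big|\to 0 .
$$
So the whole question reduces to the behaviour of the sequence $\{F(t_n)\}$.

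By weak precompactness, $\{F(t_n)\}$ is precompact. Take any subsequence with $F(t_{n_k})\to c$ and put $d:=c-F(0)$. By the identity above, $F^{t_{n_k}}\to F+d$ in $C(\mathbb T,\mathfrak B)$, and in particular $F(t_{n_k}+t)\to F(t)+d$ for every $t$. Let $D\subset\mathfrak B$ be the set of all such "drifts" $d$, taken over all sequences in $\mathfrak N_{\varphi}$ along which $F(t_n)$ converges. It then suffices to prove $D=\{0\}$. Indeed, every convergent subsequence of $\{F^{t_n}\}$ would then have limit $F$, and precompactness gives $F^{t_n}\to F$, i.e. $\mathfrak N_{\varphi}\subseteq\mathfrak N_F$.

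To prove $D=\{0\}$, I will show that $D$ is closed under addition. Let $d_1\in D$ be realised by $\{t_n\}$ and $d_2\in D$ by $\{s_m\}$. For each $m$, since $\varphi^{t_n}\to\varphi$, continuity of the shift (Lemma \ref{l1.5.2}) gives $\varphi^{t_n+s_m}\to\varphi^{s_m}$ as $n\to\infty$. Also, from the previous paragraph, $F(t_n+s_m)\to F(s_m)+d_1$ as $n\to\infty$. Choose $n(m)$ so that
$$
d_p(\varphi^{t_{n(m)}+s_m},\varphi^{s_m})<1/m\quad\text{and}\quad |F(t_{n(m)}+s_m)-F(s_m)-d_1|<1/m .
$$
Then $r_m:=t_{n(m)}+s_m$ lies in $\mathfrak N_{\varphi}$ and $F(r_m)\to F(0)+d_1+d_2$, so $d_1+d_2\in D$. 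Iterating, $kd\in D$ for every $d\in D$ and $k\in\mathbb N$. Consequently $F(0)+kd$ lies in the closure of $F(\mathbb T)$, so $|kd|\le 2\sup_t|F(t)|<\infty$ for all $k$, which forces $d=0$.

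The main obstacle is the diagonal step. The composed times $t_{n(m)}+s_m$ must lie in $\mathbb T$, which is automatic when the times are taken in $\mathbb T_{+}$ (the relevant case for $\mathbb T=\mathbb R_{+}$). I will also have to check carefully that closeness of $\varphi^{t_{n(m)}+s_m}$ to $\varphi^{s_m}$, combined with $\varphi^{s_m}\to\varphi$, indeed gives convergence to $\varphi$ in the metric $d_p$; this follows from the triangle inequality.
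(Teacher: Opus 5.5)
Your proof is correct. It follows the paper's overall Shcherbakov-type strategy:
\begin{itemize}
\item the identity $F^{t_n}(t)=F(t_n)+\int_0^t\varphi(t_n+s)\,ds$;
\item a proof that every drift vanishes, by placing its multiples in $\overline{F(\mathbb T)}$;
\item a subsequence argument at the end.
\end{itemize}
Where you differ is in how you get the multiples $kd$.

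The paper never constructs new return times. From the single sequence $\{t_n\}$ it uses $F(t)+d=\lim_n F(t+t_n)$ for every $t$, which is exactly the pointwise limit you already derived. This gives $F(\mathbb T)+d\subseteq\overline{F(\mathbb T)}$, hence $\overline{F(\mathbb T)}+d\subseteq\overline{F(\mathbb T)}$. That set inclusion iterates for free to $F(0)+kd\in\overline{F(\mathbb T)}$; the chain $\overline{F_k(\mathbb T)}\subset\overline{F_{k-1}(\mathbb T)}$ with $F_k=kx_0+F_0$ encodes precisely this. So for the theorem as stated, your diagonal step, and with it the continuity of the shift on $L^p_{loc}$, is not needed.

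What your route buys is that each $kd$ is realised along a genuine sequence in $\mathfrak N_\varphi$. Diagonalising once more over $k$ then gives $\{\rho_k\}\in\mathfrak N_\varphi$ with $|F(\rho_k)|\to\infty$ whenever $d\neq 0$. Hence weak precompactness alone forces $D=\{0\}$, and the separate boundedness hypothesis becomes superfluous; the paper's set-level argument does not give this.

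Your bookkeeping $d=c-F(0)$ is also cleaner than the paper's. The paper asserts $\lim F(t_n)=0$, and its identity for $F(t+t_{k_n})-F(t)$ is off by the constant $F(0)$. What is meant there is $F(t_n)\to F(0)$, i.e. $F_0(t_n)\to 0$.
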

\begin{proof}
This statement was proved by Shcherbakov
\cite[Ch.II]{Sch72},\cite{Sch73} if the function $\varphi$ is
continuous and Poisson stable (in the shift dynamical system
($C(\mathbb T,\mathfrak B),\mathbb T,\sigma)$). By some
modification of the ideas of Shcherbakov we will present the proof
for $S^{p}$ (Stepanov) Poisson stable functions $\varphi \in
L^{p}_{loc}(\mathbb T,\mathfrak B)$.

Consider the primitive $F_0$ of $\varphi$ defined by the equality
\begin{equation}\label{eqPF1}
F_0(t):=\int_{0}^{t}\varphi(s)ds \nonumber
\end{equation}
for all $t\in \mathbb T$.

We will show that under the conditions of Theorem if $\{t_n\}\in
\mathfrak N_{\psi}$ and the sequence $\{F(t_n)\}$ converges then
$\lim\limits_{n\to \infty}F(t_n)=0$, where $0$ is the null element
of $\mathfrak B$.

Assume that
\begin{equation}\label{eqPF2}
\lim\limits_{n\to \infty}F(t_n)=x_0\not= 0
\end{equation}
and consider the sequence of functions $\{F_n\}$ from $C(\mathbb
T,\mathfrak B)$ defined by the equality
\begin{equation}\label{eqPF3}
F_n(t):=nx_0+\int_{0}^{t}\varphi(s)ds \nonumber
\end{equation}
for all $n\in \mathbb N$ and $t\in \mathbb T$. We will show that
for an $k\in \mathbb N$ we have
\begin{equation}\label{eqPF4}
\overline{F_{k}(\mathbb T)} \subset \overline{F_{k-1}(\mathbb
T)}.\nonumber
\end{equation}
Indeed, we note that
\begin{equation}\label{eqPF5}
F_{k}(t_0)-F_{k-1}(t_0+t_n)=x_0-F_0(t_n)+\int_{0}^{t_0}[\varphi(s)-\varphi(s+t_n)]ds
.
\end{equation}
Passing to the limit in (\ref{eqPF5}) as $n\to \infty$ and taking
into account (\ref{eqPF2}) and $\{t_n\}\in \mathfrak N_{\varphi}$
(i.e., $\varphi^{t_n}\to \varphi$ as $n\to \infty$ in the space
$L^{p}_{loc}(\mathbb T,\mathfrak B)$) by Lemma \ref{lISP1} (item
(iii)) we obtain
\begin{equation}\label{eqPF6}
F_{k}(t_0)=\lim\limits_{n\to \infty}F_{k-1}(t_0+t_n) .\nonumber
\end{equation}
This means that $F_{k}(t_0)\in \overline{F_{k-1}(\mathbb T)}$ for
every $t_0\in \mathbb T$. Thus we have $F_{k}(\mathbb T)\subset
\overline{F_{k-1}(\mathbb T)}$ and, consequently,
$\overline{F_{k}(\mathbb T)}\subset \overline{F_{k-1}(\mathbb T)}$
and
\begin{equation}\label{eqPF7}
\overline{F_{0}(\mathbb T)}\supset \overline{F_{1}(\mathbb
T)}\supset \ldots \supset \overline{F_{n}(\mathbb T)}\supset
\ldots .\nonumber
\end{equation}
In particular, we have $\overline{F_{n}(\mathbb T)}\subset
\overline{F_{0}(\mathbb T)}$ for all $n\in \mathbb N$. But
\begin{equation}\label{eqPF8}
nx_0\in \overline{F_{n}(\mathbb T)}\subset \overline{F_{0}(\mathbb
T)}
\end{equation}
for every $n\in \mathbb N$. The relation (\ref{eqPF8}) contradicts
to the boundedness of the primitive $F_0$.

Now we will show that the bounded primitive $F$ of the function
$\varphi$ is comparable with the function $\varphi$. Indeed, let
$\{t_n\}\in \mathfrak N_{\varphi}$ then we can extract a
subsequence $\{t_{k_{n}}\}$ such that
\begin{equation}\label{eqPF9}
\lim\limits_{n\to \infty}F(t_{k_n})=0 .
\end{equation}
Note that
\begin{equation}\label{eqPF10}
F(t+t_{k_n})-F(t)=F(t_n)+\int_{0}^{t}[\varphi(s+t_{k_n})-\varphi(s)]ds
.
\end{equation}
Passing to the limit in (\ref{eqPF10}) as $n\to \infty$ and taking
into account (\ref{eqPF9}) and the fact that the sequence
$\{\varphi^{t_n}\}$ converges to $\varphi$ in the space
$L^{p}_{loc}(\mathbb T,\mathfrak B)$ by Lemma \ref{lISP1} (item
(iii)) we obtain $F^{t_{k_{n}}}\to F$ as $n\to \infty$ in the
space $C(\mathbb T,\mathfrak B)$. This means that $\{t_{k_n}\}\in
\mathfrak N_{F}$. Thus the sequence $\{F^{t_n}\}$ is precompact
and it has a unique limiting point $F$ in the space $C(\mathbb
T,\mathfrak B)$ and, consequently, $\{t_n\}\in \mathfrak N_{F}$.
Theorem is proved.
\end{proof}

\begin{coro}\label{corP1} Let $\varphi \in L^{p}_{loc}(\mathbb T,\mathfrak
B)$be a $S^{p}$ Poisson stable function. Every precompact
primitive of the function $\varphi$ is comparable by the character
of recurrence with the function $\varphi$.
\end{coro}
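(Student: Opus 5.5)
This follows from Theorem \ref{thP0}; the only work is to check that a precompact primitive satisfies its hypotheses. Let $F$ be a precompact primitive of $\varphi$, i.e. $\overline{F(\mathbb T)}$ is a compact subset of $\mathfrak B$.

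First, compactness of $\overline{F(\mathbb T)}$ gives boundedness of $F$, since compact sets in a Banach space are bounded.

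Second, it gives weak precompactness in the sense of Theorem \ref{thP0}. For every sequence $\{t_n\}\in\mathfrak N_{\varphi}$, the set $\{F(t_n)\}$ lies in the compact set $\overline{F(\mathbb T)}$ and is therefore precompact. This holds for arbitrary sequences $\{t_n\}\subset\mathbb T$, not only those in $\mathfrak N_{\varphi}$.

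Third, $\varphi$ is $S^{p}$ Poisson stable by hypothesis. All assumptions of Theorem \ref{thP0} are therefore satisfied, and it yields $\mathfrak N_{\varphi}\subseteq\mathfrak N_{F}$. This is exactly the statement that $F$ is comparable by the character of recurrence with $\varphi$ (Definition \ref{defB1}).

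There is no substantive obstacle. The only point to watch is the notation of Theorem \ref{thP0}, where $\mathfrak N_{\psi}$ is clearly meant to be $\mathfrak N_{\varphi}$. Precompactness of $\{F(t_n)\}$ along every sequence covers that theorem's hypothesis under either reading.
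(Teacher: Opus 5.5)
Your proposal is correct and matches the paper's proof: the corollary follows from Theorem \ref{thP0}, because precompactness of $F$ gives both boundedness and weak precompactness. Your explicit check of boundedness, which the paper leaves implicit, and your note on the $\mathfrak N_{\psi}$ typo are harmless additions.
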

\begin{proof} This statement directly follows from Theorem
\ref{thP0} because if the primite $F$ of the function $\varphi$ is
precompact, then it is evidently weakly precompact.
\end{proof}

\begin{coro}\label{corP2} Let $\varphi \in L^{p}_{loc}(\mathbb T,\mathfrak
B)$ be a $S^{p}$ stationary (respectively, $S^{p}\ \tau$-periodic,
$S^{p}$ almost recurrent). Every precompact primitive of the
function $\varphi$ is stationary (respectively, $S^{p}$
$\tau$-periodic, $S^{p}$ almost recurrent).
\end{coro}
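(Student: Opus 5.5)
The plan is to combine Corollary \ref{corP1} with Theorem \ref{thRAP4.1}, item (1). Let $\varphi\in L^{p}_{loc}(\mathbb T,\mathfrak B)$ be $S^{p}$ stationary (respectively, $S^{p}$ $\tau$-periodic, $S^{p}$ almost recurrent), and let $F$ be a precompact primitive of $\varphi$. In the Bebutov system on $L^{p}_{loc}$ this means that $\varphi$ is a stationary (respectively, $\tau$-periodic, almost recurrent) point, with $\varphi^{\tau}=\varphi$ in $L^{p}_{loc}$ in the periodic case.

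The first step is to check that $\varphi$ is $S^{p}$ Poisson stable, so that Corollary \ref{corP1} applies.
\begin{itemize}
\item[\textbullet] If $\varphi$ is $\tau$-periodic, then $\sigma(k\tau,\varphi)=\varphi$ with $k\tau\to+\infty$, so $\varphi\in\omega_{\varphi}$.
\item[\textbullet] If $\varphi$ is stationary, this is a special case of the periodic one.
\item[\textbullet] If $\varphi$ is almost recurrent, then for each $\varepsilon=1/n$ the set $\mathcal T(1/n,\varphi)$ is relatively dense. We can therefore pick $t_n\in\mathcal T(1/n,\varphi)\cap[n,n+l_n]$, which gives $t_n\to+\infty$ and $\varphi^{t_n}\to\varphi$. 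Hence $\varphi\in\omega_{\varphi}$.
\end{itemize}
When $\mathbb T=\mathbb R$, negative Poisson stability is obtained in the same way if it is required.

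Corollary \ref{corP1} then gives $\mathfrak N_{\varphi}\subseteq\mathfrak N_{F}$. That is, the point $F$ of the shift system $(C(\mathbb T,\mathfrak B),\mathbb T,\sigma)$ is comparable by the character of recurrence with the point $\varphi$ of $(L^{p}_{loc}(\mathbb T,\mathfrak B),\mathbb T,\sigma)$. Theorem \ref{thRAP4.1}(1), applied with $X=C(\mathbb T,\mathfrak B)$ and $Y=L^{p}_{loc}(\mathbb T,\mathfrak B)$, shows that $F$ is stationary (respectively, $\tau$-periodic, almost recurrent) as a point of the shift system. Since $F$ is continuous, convergence in $C(\mathbb T,\mathfrak B)$ implies convergence in $L^{p}_{loc}$, so these properties hold in the $S^{p}$ sense as well. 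This is what the statement asserts.

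The step needing most care is the hypothesis of Corollary \ref{corP1}, i.e.\ of Theorem \ref{thP0}. There $F$ must be weakly precompact along sequences in $\mathfrak N_{\varphi}$, which follows from precompactness of $F(\mathbb T)$. The only nonformal part is that Theorem \ref{thP0} needs $F$ bounded, and a precompact range is bounded.

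For stationarity one could also argue directly. For every $\tau$ the constant sequence $t_n=\tau$ lies in $\mathfrak N_{\varphi}$, hence in $\mathfrak N_{F}$, so $F^{\tau}=F$ for all $\tau$. The periodic case follows the same way with $t_n=\tau$, which avoids any subtlety about Poisson stability.
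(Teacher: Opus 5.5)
Your proposal is correct and is essentially the paper's proof, which just cites Corollary~\ref{corP1} together with Theorem~\ref{thRAP4.1}. Your explicit check that $\varphi$ is $S^{p}$ Poisson stable (needed for Corollary~\ref{corP1}) and your transfer of the conclusion from $C(\mathbb T,\mathfrak B)$ to the $S^{p}$ sense fill in details the paper leaves implicit. One small quibble: your closing constant-sequence argument does not really avoid Poisson stability, because the inclusion $\mathfrak N_{\varphi}\subseteq\mathfrak N_{F}$ it relies on still comes from Corollary~\ref{corP1}; in the stationary and periodic cases, however, Poisson stability is trivial anyway.
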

\begin{proof} This statement follows from Corollary \ref{corP1}
and Theorem \ref{thRAP4.1}.
\end{proof}

\begin{theorem}\label{thP1} Let function $\varphi \in L^{p}_{loc}(\mathbb T,\mathfrak
B)$ be recurrent. Every precompact primitive of the function
$\varphi$ is strongly comparable by the character of recurrence
with the function $\varphi$.
\end{theorem}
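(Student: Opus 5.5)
We must show $\mathfrak M_{\varphi}\subseteq \mathfrak M_{F}$: if $\{t_n\}$ is such that $\varphi^{t_n}$ converges in $L^{p}_{loc}(\mathbb T,\mathfrak B)$, then $F^{t_n}$ converges in $C(\mathbb T,\mathfrak B)$. The plan reduces this to Theorem \ref{thP0}, applied to the limit functions in the hull $H(\varphi)$. Since $\varphi$ is recurrent, $H(\varphi)=\omega_{\varphi}$ is a compact minimal set, and every $\psi\in H(\varphi)$ is recurrent and hence $S^{p}$ Poisson stable.

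\emph{Step 1: subsequential limits are primitives of the limit.} Let $F$ be a precompact primitive, so $K:=\overline{F(\mathbb T)}$ is compact. Let $\varphi^{t_n}\to\psi$. The functions $F^{t_n}$ are primitives of $\varphi^{t_n}$ (the case $A=0$ of (\ref{eqISP1})) with values in $K$. By Lemma \ref{lISP1}, $\{F^{t_n}\}$ is precompact in $C(\mathbb T,K)$, and every limit point $G$ is a primitive of $\psi$ with values in $K$. It therefore suffices to show that all limit points coincide. Suppose $G_1,G_2$ are two of them. Then $G_1-G_2\equiv c$ is a constant vector, since both are primitives of $\psi$. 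We must show $c=0$.

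\emph{Step 2: the main obstacle, ruling out $c\neq 0$.} The idea is to transport the problem back to $\varphi$ through minimality. Because $\psi\in\omega_{\varphi}$ and $\varphi\in H(\psi)$ (minimality), there exists $\{s_m\}$ with $\psi^{s_m}\to\varphi$.

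Pass to subsequences, using compactness of $K$ and Lemma \ref{lISP1}, so that $G_1^{s_m}\to \widetilde G_1$ and $G_2^{s_m}\to\widetilde G_2$. Both $\widetilde G_i$ are primitives of $\varphi$ with values in $K$, and $\widetilde G_1-\widetilde G_2=c$.

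Thus $\widetilde G_1=\widetilde G_2+c$. Apply Theorem \ref{thP0} (via Corollary \ref{corP1}) to the compact primitive $\widetilde G_2$ of the Poisson stable function $\varphi$. This gives $\mathfrak N_{\varphi}\subseteq\mathfrak N_{\widetilde G_2}$, and the same holds for $\widetilde G_1$. More directly, I would mimic the argument in the proof of Theorem \ref{thP0}: the primitives $\widetilde G_2+kc$ satisfy the nested-closure property $\overline{(\widetilde G_2+kc)(\mathbb T)}\subseteq\overline{(\widetilde G_2+(k-1)c)(\mathbb T)}$. To see this, take $\{\tau_n\}\in\mathfrak N_{\varphi}$ along which $\widetilde G_2(\tau_n)\to$ a value realizing the shift by $c$. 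Such a sequence is obtained by composing the sequences $t_n$ and $s_m$ and using Lemma \ref{lISP1}(iii).

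Iterating the inclusion gives $kc\in \overline{\widetilde G_2(\mathbb T)}-\widetilde G_2(0)+\widetilde G_2(0)$ for all $k$. This contradicts boundedness unless $c=0$. This step requires care with the diagonal choice of sequences, and I expect it to be the hardest part. A cleaner route, if available, is to show directly that $\widetilde G_1$ and $\widetilde G_2+kc$ all lie in the orbit closure of $\widetilde G_2$, and then use boundedness.

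\emph{Step 3: conclusion.} Once $c=0$, the precompact sequence $\{F^{t_n}\}$ has a unique limit point, so it converges. Hence $\mathfrak M_{\varphi}\subseteq\mathfrak M_{F}$. Equivalently, by Theorem \ref{thRAP4.01}, there is a continuous homomorphism $h:H(\varphi)\to H(F)$ with $h(\varphi)=F$.
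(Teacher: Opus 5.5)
Your Step 2 has a genuine gap. First, applying Theorem~\ref{thP0} separately to $\widetilde G_1$ and $\widetilde G_2$ gives nothing. The inclusion $\mathfrak N_{\varphi}\subseteq\mathfrak N_{G}$ holds for every constant translate $G$ of a compact primitive of $\varphi$, so it cannot distinguish $\widetilde G_2$ from $\widetilde G_2+c$.

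Second, the nested-closure argument needs a sequence $\{\tau_n\}\in\mathfrak N_{\varphi}$ with $\widetilde G_2^{\tau_n}\to\widetilde G_2+c=\widetilde G_1$. Composing $\{t_n\}$ with $\{s_m\}$ does not produce one. Write $\widetilde G_i=F+d_i$, and recall that shifts commute with adding constants. The sequences you can build carry $F$ to $F+d_1$ and to $F+d_2$, and by iteration to $F+ad_1+bd_2$ with integers $a,b\ge 0$. What you need is to carry $\widetilde G_2$ to $\widetilde G_1$, i.e.\ $F$ to $F+d_1-d_2$, and that requires reversing the passage from $F$ to $\widetilde G_2$. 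Reversing it amounts to knowing that the orbit closure of $(F(0),\varphi)$ in the skew-product system is minimal, which is essentially what has to be proved. Once $c=0$ is known such a sequence exists trivially, so the claim as stated is circular.

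The missing idea is to compare with $F$ itself rather than with $\widetilde G_2$. Choose diagonally $\tau_k=t_{n_{j_k}}+s_{m_k}$ along the subsequence defining $G_2$. Then $\varphi^{\tau_k}\to\varphi$ and $F^{\tau_k}\to\widetilde G_2$. Since $\{\tau_k\}\in\mathfrak N_{\varphi}$, Theorem~\ref{thP0} applied to the compact primitive $F$ gives $F^{\tau_k}\to F$, hence $\widetilde G_2=F$. Likewise $\widetilde G_1=F$, so $c=0$, and the nested-closure machinery is not needed.

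With this change your argument is complete, and it is a genuinely different route from the paper's. The paper passes to the skew product on $\mathfrak B\times H(\varphi)$. It uses Birkhoff's theorem to pick a compact minimal set $M$ inside the orbit closure of $(F(0),\varphi)$. It then shows, via Theorem~\ref{thP0} and the minimality of $M$, that every fibre of $M$ is a single point. Strong comparability follows for the primitive through $M$, and hence for all primitives, since they differ by constants. Your corrected route avoids Birkhoff's theorem and minimal subsets of the skew product. It uses minimality only in the base $H(\varphi)$, to return to the fibre over $\varphi$, and works directly with the given $F$.
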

\begin{proof} This statement was proved by Shcherbakov \cite[Ch.II]{Sch72},\cite{Sch73}
if the function $\varphi$ is continuous and recurrent (in the
shift dynamical system ($C(\mathbb T,\mathfrak B),\mathbb
T,\sigma)$).

Denote by $Y:=H(\varphi)$ the closure in the space
$L^{p}_{loc}(\mathbb T,\mathfrak B)$ the family of all shifts
$\{\varphi^{h}|\ h\in \mathbb T\}$. Under the condition of Theorem
$Y$ is a compact and minimal set of the shift dynamical system
$(L^{p}_{loc}(\mathbb T,\mathfrak B),\mathbb T,\sigma)$. Let
$X:=\mathfrak B \times Y$ and $(X,\mathbb T,\sigma)$ be the
skew-product dynamical system $(X,\mathbb T,\pi)$, where
$\pi(h,(v,g))):=(\phi(h,v,g),g^{h})$ for all $(t,v,g)\in \mathbb
T\times \mathfrak B\times H(\varphi)$,
$\phi(t,v,g):=v+\int_{0}^{t}g(s)ds$ and $g\in H(\varphi$. Denote
by $\langle (X,\mathbb T,\pi),(Y,\mathbb T,\sigma),h\rangle$
($h:=pr_2:X\to Y$) the non-autonomous dynamical system associated
by skew-product dynamical system $(X,\mathbb T,\pi)$.

Consider an arbitrary compact primitive $F$ of the function
$\varphi$ then there exists a point $u\in \mathfrak B$ such that
\begin{equation}\label{eqSP1}
F(t)=\phi(t,u,\varphi)=u+\int_{0}^{t}\varphi(s)ds
\end{equation}
for all $t\in \mathbb T$. Denote by
$X_{0}:=H(u,\varphi)=\overline{\{\pi(t,(u,\varphi))|\ t\in \mathbb
T\}}$, where $\pi(h,(u,\varphi))=(\phi(h,u,\varphi),\varphi^{h})$
for all $h\in \mathbb T$. It is clear that $X_0$ is a compact and
invariant subset of skew-product dynamical system $(X,\mathbb
T,\pi)$. By Birkhoff theorem there exists a compact minimal subset
$M\subset X_0$. Since $Y$ is a compact and minimal set and $h:X\to
Y$ is a homomorphism of $(X,\mathbb T,\pi)$ onto $(Y,\mathbb
T,\sigma)$ then $h(M)=Y$ and, consequently,
$M_{y}:=h^{-1}(y):=\{(v,y)|\ (v,y)\in M\}\not= \emptyset$ for all
$y\in Y$. We will show that the set $M_{y}$ consists of a single
point $\{(u_y,y)\}$ for all $y\in Y$. If we assume that it is not
true, then there exists a point $\varphi_0\in Y=H(\varphi)$ such
that the set $M_{\varphi_0}=\{(u,\varphi_0)|\ (u,\varphi_0)\in
M\}$ contains at least two different points $(u_i,\varphi_0)$
($i=1,2$), i.e., $u_1\not= u_2$ ($u_1,u_2\in \mathfrak B)$. It is
clear (see formula (\ref{eqSP1})) that
\begin{equation}\label{eqSP2}
\phi(t,u_i,\varphi_0)=u_i+\int_{0}^{t}\varphi_0(s)ds\ \
(i=1,2).\nonumber
\end{equation}
By Theorem \ref{thP0} we have
\begin{equation}\label{eqSP2.1}
\mathfrak N_{\varphi_{0}}\subseteq \mathfrak N_{x_i},
\end{equation}
where $x_i=(u_i,\varphi_{0})$ ($i=1,2$). Note that $x_1,x_2\in
M_{\varphi_{0}}\subseteq M$ and taking into account the minimality
of the set $M$ we conclude that there exists a sequence
$\{\bar{t}_{n}\}\subset \mathbb T$ such that
\begin{equation}\label{eqSP3}
\lim\limits_{n\to \infty}\pi(\bar{t}_{n},x_1)=x_{2} .
\end{equation}
Notice that from (\ref{eqSP3}) it follows that
\begin{equation}\label{eqSP0}
\{\bar{t}_{n}\}\in \mathfrak N_{\varphi_{0}}.\nonumber
\end{equation}
Indeed, we have
$$
\varphi_{0}=h(x_2)=h(\lim\limits_{n\to
\infty}\pi(\bar{t}_{n},x_1))=\lim\limits_{n\to
\infty}h(\pi(\bar{t}_{n},x_1))=
$$
\begin{equation}\label{eqSP3.1}
\lim\limits_{n\to
\infty}\sigma(\bar{t}_{n},h(x_1))=\lim\limits_{n\to
\infty}\sigma(\bar{t}_{n},\varphi_{0})
\end{equation}
and, consequently, $\{\bar{t}_{n}\}\in \mathfrak N_{\varphi_{0}}$.

On the other hand we have
\begin{equation}\label{eqSP4}
\rho(\pi(\bar{t}_{n},x_1),\pi(\bar{t}_{n},x_2))\le
\rho(\pi(\bar{t}_{n},x_1),x_2)+\rho(x_2,\pi(\bar{t}_{n},x_2)
\end{equation}
for every $n\in \mathbb N$. Passing to the limit in (\ref{eqSP4}) as
$n\to \infty$ and taking into account (\ref{eqSP2.1}) and
(\ref{eqSP3.1}) we obtain
\begin{equation}\label{eqSP5}
\lim\limits_{n\to
\infty}\rho(\pi(\bar{t}_{n},x_1),\pi(\bar{t}_{n},x_2))=0.
\end{equation}
Note that
\begin{equation}\label{eqSP6}
\rho(x_1,x_2)\le
\rho(x_1,\pi(\bar{t}_{n},x_1))+\rho(\pi(\bar{t}_{n},x_1),\pi(\bar{t}_{n},x_2))+\rho(x_2,\pi(\bar{t}_{n},x_2))
\end{equation}
for all $n\in \mathbb N$. Taking into account (\ref{eqSP2.1}) and
(\ref{eqSP5}) and passing to the limit in (\ref{eqSP6}) as $n\to
\infty$ we obtain $x_1=x_2$ and, consequently, $u_1=u_2$
($x_i=(u_i,\varphi_{0}),\ i=1,2$). The last equality contradicts
to our assumption. The obtained contradiction proves our statement
that the set $M_{y}$ consists of a singe point for every $y\in Y$.

Let now $x_{\varphi}=(u_{\varphi},\varphi)\in M_{\varphi}$. We
will show that the primitive
\begin{equation}\label{eqSP6.1}
F(t):=\phi(t,u_{\varphi},\varphi)=u_{\varphi}+\int_{0}^{t}\varphi(s)ds
\nonumber
\end{equation}
of the function $\varphi$ is strongly comparable by the character
of recurrence with the function $\varphi$, i.e., $\mathfrak
M_{\varphi}\subseteq \mathfrak M_{F}$ or equivalently $\mathfrak
M_{\varphi}\subseteq \mathfrak M_{x_{\varphi}}$. Indeed, if
$\{t_n\}\in \mathfrak M_{\varphi}$, then $\{t_n\}\in \mathfrak
M_{x_{\varphi}}$. Assuming that it is not so then there exists a
sequence $\{t_n\}\in \mathfrak M_{\varphi}$ such that
$\{t_n\}\notin \mathfrak M_{x_{\varphi}}$. This means that the
sequence $\{\pi(t_n,x_{\varphi})\}$ is not convergent. Since
$\{\pi(t_n,x_{\varphi})\}\subseteq M$ and the set $M$ is compact
then the sequence $\{\pi(t_n,x_{\varphi})\}$ is precompact and it
has at least two different limit points $\bar{x}_{i}$ ($i=1,2$).
Since $\{t_n\}\in \mathfrak M_{\varphi}$ then there exists a point
$\bar{\varphi}\in H(\varphi)$ such that $\varphi^{t_n}\to
\bar{\varphi}$ as $n\to \infty$. It easy to check that
$\bar{x}_{i}\in M_{\bar{\varphi}}$ ($i=1,2$). According to our
assumption $\bar{x}_{1}\not= \bar{x}_{2}$. Thus we have a point
$\varphi_{0}\in H(\varphi)$ for which the set $M_{\bar{\varphi}}$
contains at least two different points, but this fact contradicts
what has been proven above. The obtained contradiction complete
the proof of Theorem.
\end{proof}

\begin{coro}\label{corP2.01} Let $\psi \in L^{p}_{loc}(\mathbb T,\mathfrak
B)$ be a $S^{p}$ almost periodic (respectively, $S^{p}$
recurrent). Every precompact primitive of the function $\psi$ is
almost periodic (respectively, recurrent).
\end{coro}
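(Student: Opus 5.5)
The plan is to reduce the statement to Theorem \ref{thP1} combined with Theorem \ref{thRAP4.1}(2). This is the same way Corollary \ref{corP2} was obtained from Corollary \ref{corP1}.

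\emph{Step 1: recurrence of $\psi$.} First check that $\psi$ falls under the hypotheses of Theorem \ref{thP1}.
\begin{itemize}
\item In the recurrent case there is nothing to check.
\item In the $S^{p}$ almost periodic case, $\hat{\psi}\in C(\mathbb T,L^{p}([0,1],\mathfrak B))$ is almost periodic. Its range is therefore precompact, and $\hat{\psi}$ is uniformly continuous. This gives $S^{p}$ boundedness and $S^{p}$ uniform continuity of $\psi$.
\item To avoid any finite-dimensionality hypothesis (Lemma \ref{lAPF2}), argue directly. Bochner's criterion applied to $\hat\psi$ shows that every sequence $\{\hat\psi^{h_n}\}$ has a subsequence converging uniformly on $\mathbb T$. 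Uniform convergence of $\hat\psi^{h_n}$ implies convergence of $\psi^{h_n}$ on each $\mathbb T_\ell$ in $L^{p}$, by covering $\mathbb T_\ell$ with finitely many unit intervals. Hence $\{\psi^{h}\}$ is precompact in $L^{p}_{loc}(\mathbb T,\mathfrak B)$, so $\psi$ is Lagrange stable.
\item Almost periodicity of $\hat\psi$ likewise transfers to the motion $\sigma(t,\psi)$. The uniform estimate on unit intervals gives $d_p(\psi^{t+\tau},\psi^{t})$ small for all $t$ and all $\tau$ in the relatively dense set $\mathcal F(\varepsilon,\psi)$.
\item Thus $\psi$ is an almost periodic, and in particular recurrent, point of $(L^{p}_{loc}(\mathbb T,\mathfrak B),\mathbb T,\sigma)$.
\end{itemize}

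\emph{Step 2: strong comparability.} Let $F$ be a precompact primitive of $\psi$. By Theorem \ref{thP1}, $\mathfrak M_{\psi}\subseteq\mathfrak M_{F}$, so $F$ is strongly comparable by the character of recurrence with $\psi$ in the sense of Definition \ref{defB1}.

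\emph{Step 3: conclusion.} Apply Theorem \ref{thRAP4.1}(2) with $Y=L^{p}_{loc}(\mathbb T,\mathfrak B)$, $y=\psi$, $X=C(\mathbb T,\mathfrak B)$ and $x=F$. This shows that $F$ is an almost periodic (respectively, recurrent) point of the Bebutov system.

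To return to the function-level notion, note that $F$ is bounded with precompact range. Almost periodicity of the motion $\sigma(t,F)$ in the compact-open metric $d$ upgrades to uniform almost periodicity of the function $F$. The standard route is:
\begin{itemize}
\item almost periodicity of the motion gives an equi-almost periodic hull $H(F)$ (the hull is compact and minimal, with the usual uniformity);
\item evaluating at $t=0$ along orbits gives $|F(t+\tau)-F(t)|<\varepsilon$ for all $t$. Alternatively, one can cite the known equivalence, as in Lemma \ref{lAPF3.1}, between almost periodicity of a function and of its shift motion.
\end{itemize}
The recurrent case needs no upgrade, since recurrence is defined at the level of the motion.

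\emph{Main obstacle.} I expect the only delicate point to be Step 1: rigorously identifying $S^{p}$ almost periodicity (almost periodicity of $\hat\psi$) with almost periodicity of the point $\psi$ in the $L^{p}_{loc}$ shift system, including Lagrange stability. If a direct argument becomes messy, I would use the isometric-type correspondence $\psi\mapsto\hat\psi$. This correspondence conjugates the shift on $H(\psi)\subset L^{p}_{loc}$ with the shift on $H(\hat\psi)\subset C(\mathbb T,L^{p}([0,1],\mathfrak B))$: convergence in $d_p$ is equivalent to convergence in the compact-open topology of the $\hat{\ }$-images. Theorem \ref{thRAP4.1}(2) then transfers almost periodicity between the two systems.
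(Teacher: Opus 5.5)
Your proposal is correct and follows the paper's route: the paper's proof is the one-line combination of Theorem \ref{thP1} (strong comparability of a precompact primitive with a recurrent $\psi$) and Theorem \ref{thRAP4.1}(2). Your Step 1 ($S^{p}$ almost periodicity makes $\psi$ an almost periodic, hence Lagrange stable and recurrent, point of the $L^{p}_{loc}$ shift system) and your final upgrade ($d(F^{t+\tau},F^{t})<\varepsilon<1$ forces $|F(t+\tau)-F(t)|<\varepsilon$ for all $t$) just make explicit the checks that the paper leaves implicit.
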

\begin{proof} This statement follows from Theorems \ref{thP1}
and \ref{thRAP4.1}.
\end{proof}

\begin{coro}\label{corP2.1} Under the condition of Theorem
\ref{thP1} for every precompact primitive $F$ of the function
$\varphi$ there exists a continuous function $\nu
:\omega_{\varphi}\to \mathfrak B$ such that
\begin{equation}\label{eqN_1}
\phi(t,\nu(\psi),\psi)=\nu(\sigma(t,\psi))\nonumber
\end{equation}
for all $(t,\psi)\in \mathbb R\times \omega_{\varphi}$.
\end{coro}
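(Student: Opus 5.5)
The plan is to read the corollary off the proof of Theorem \ref{thP1}, where the point is the map $y\mapsto u_y$ defined on the minimal set. Under the conditions of that theorem, $\varphi$ is recurrent, so $Y=H(\varphi)$ is compact and minimal. Hence $\omega_{\varphi}=H(\varphi)=Y$: the closed invariant set $\omega_{\varphi}\subseteq Y$ is nonempty by compactness, so it equals $Y$ by minimality. The proof of Theorem \ref{thP1} produces a compact minimal set $M\subset X_0\subset \mathfrak B\times Y$ of the skew-product system $(X,\mathbb T,\pi)$. It also shows that for every $y\in Y$ the fibre $M_y$ is a single point $(u_y,y)$. I define $\nu:\omega_{\varphi}\to\mathfrak B$ by $\nu(\psi):=u_\psi$, that is, $M_\psi=\{(\nu(\psi),\psi)\}$, so that $M$ is exactly the graph of $\nu$.

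For the invariance identity I use that $M$ is invariant. If $(\nu(\psi),\psi)\in M$, then
$$\pi(t,(\nu(\psi),\psi))=(\phi(t,\nu(\psi),\psi),\psi^{t})\in M.$$
Its second coordinate is $\sigma(t,\psi)$, and the fibre of $M$ over $\sigma(t,\psi)$ is a singleton, so $\phi(t,\nu(\psi),\psi)=\nu(\sigma(t,\psi))$ for all $t\in\mathbb T$ and $\psi\in\omega_\varphi$. When $\mathbb T=\mathbb R$ this holds for negative $t$ as well, since $M$ is invariant under the two-sided flow.

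Continuity of $\nu$ is the step that needs an actual argument, and it is a closed-graph argument using compactness. Let $\psi_n\to\psi$ in $Y$. The points $(\nu(\psi_n),\psi_n)$ lie in the compact set $M$, so every subsequence has a further subsequence converging to some point $(v,\psi')\in M$. Since $h=pr_2$ is continuous, $\psi'=\psi$, and the single-fibre property then forces $v=\nu(\psi)$. Because every subsequence has a sub-subsequence converging to $\nu(\psi)$, we get $\nu(\psi_n)\to\nu(\psi)$, so $\nu$ is continuous.

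Finally I connect $\nu$ to the given precompact primitive $F$. Write $F=\phi(\cdot,u,\varphi)$. Theorem \ref{thP1} shows that the primitive through $(u_\varphi,\varphi)$ is strongly comparable with $\varphi$. For an arbitrary compact primitive I would argue as follows. Any two primitives of $\varphi$ differ by a constant $c=u-u_\varphi$. The fibre over $\varphi$ of the closure of the orbit of $(u,\varphi)$ is translated by $c$ relative to that of $(u_\varphi,\varphi)$, since translation by $c$ in the first coordinate commutes with $\pi$. Running the construction above inside $H(u,\varphi)$ itself, with $M\subset H(u,\varphi)$, already yields a $\nu$ tied to $F$. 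So the existence claim follows for each precompact $F$, with $M$ chosen inside $H(u,\varphi)$.
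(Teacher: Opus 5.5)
Your argument is correct, but it takes a different route from the paper. The paper never opens up the proof of Theorem \ref{thP1}. It uses only its conclusion: the given primitive $F=\phi(\cdot,u,\varphi)$ is strongly comparable with $\varphi$, i.e.\ $\mathfrak M_{\varphi}\subseteq\mathfrak M_{x}$ for $x=(u,\varphi)$. It then applies the abstract characterization in Theorem \ref{thRAP4.01} to get a continuous homomorphism $\gamma:H(\varphi)\to H(x)$ with $\gamma(\varphi)=x$. Writing $\gamma=(\nu,\mathrm{Id})$ gives $\nu$ together with the normalization $\nu(\varphi)=u$, i.e.\ $F(t)=\nu(\sigma(t,\varphi))$.

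You instead take the minimal set $M\subset H(u,\varphi)$ built inside that proof and use its singleton-fibre property to read $M$ as the graph of $\nu$. The cocycle identity then comes from the invariance of $M$, and continuity from compactness of the graph. This avoids the external Theorem \ref{thRAP4.01} and makes the geometry explicit. For the literal statement, which only asks for some continuous invariant section, your first three paragraphs already suffice. The cost is that you rely on an intermediate step of a proof rather than a stated result, and the link $\nu(\varphi)=u$ is not automatic.

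Your last paragraph, which tries to supply that link, is too vague as written. ``Choosing $M$ inside $H(u,\varphi)$'' does not by itself put $(u,\varphi)$ on the graph of $\nu$, since a priori $M$ could be a proper subset of $H(u,\varphi)$. There are two easy fixes:
\begin{enumerate}
\item Replace $\nu$ by $\nu+c$ with $c:=u-\nu(\varphi)$. This stays continuous and invariant because $\phi(t,v+c,\psi)=\phi(t,v,\psi)+c$.
\item Finish the translation argument you allude to. Set $T_{c}(v,\psi):=(v+c,\psi)$, which commutes with $\pi$. Then $H(u,\varphi)=T_{c}(H(\nu(\varphi),\varphi))=T_{c}(M)$, which is a minimal set. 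Since $M\subseteq T_{c}(M)$, minimality forces $M=H(u,\varphi)$. Hence $(u,\varphi)\in M$ and $\nu(\varphi)=u$, so in fact $c=0$.
\end{enumerate}
The normalized form is the one the paper actually uses later, in the proof of Theorem \ref{thIC2}, where $\nu(\widetilde{\psi})=\widetilde{u}$ is needed.
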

\begin{proof} Let $F$ be a compact primitive of the function
$\varphi$ then there exists an element $u\in \mathfrak B$ such
that
\begin{equation}\label{eqS1}
F(t)=\phi(t,u,\varphi)=u+\int_{0}^{t}\varphi(s)ds \nonumber
\end{equation}
for all $t\in \mathbb T$. By Theorem \ref{thP1} $\mathfrak
M_{\varphi}\subseteq \mathfrak M_{x},$ where $x=(u,\varphi)\in
X:=\mathfrak B\times H(\varphi)$. According to Theorem
\ref{thRAP4.01} there exists a continuous mapping
$\gamma:H(\varphi)\to H(x)$ (or equivalently, there exists a
continuous mapping $\nu :\omega_{\varphi}\to \mathfrak B$)
satisfying the conditions
\begin{equation}\label{eqS2}
\gamma(\varphi)=x=(u,\varphi)\ \ \mbox{and}\ \
\gamma(\sigma(t,\psi))=\pi(t,\gamma(\psi)) \nonumber
\end{equation}
(or equivalently
\begin{equation}\label{eqS3}
\nu(\varphi)=u\ \ \mbox{and}\ \ \
\nu(\sigma(t,\psi))=\phi(t,\nu(\psi),\psi) ) \nonumber
\end{equation}
for all $(t,\psi)\in \mathbb T\times \omega_{\varphi}$.
\end{proof}

Below we will consider a special class of remotely almost periodic
functions. Namely we consider the remotely almost periodic
functions $\varphi$ with the minimal $\omega$-limit set
$\omega_{\varphi}$, i.e., $\omega_{\varphi}$ is a minimal set of
shift dynamical system $(L^{p}_{loc}(\mathbb T,\mathfrak
B),\mathbb T,\sigma)$.

\begin{theorem}\label{thIC2} Let $\varphi \in L^{p}_{loc}(\mathbb T,\mathfrak
B)$ be a positively Lagrange stable function. Assume that the
following conditions are fulfilled:
\begin{enumerate}
\item $\varphi$ is a $S^{p}$ remotely almost periodic
(respectively, remotely $\tau$-periodic or remotely stationary)
function and its $\omega$-limit set $\omega_{\varphi}$ is minimal;
\item the primitive $F(t):=\int_{0}^{t}\varphi(s)ds$ of the
function $\varphi$ is precompact, i.e.,
$Q:=\overline{\varphi(\mathbb R_{+})}$ is a compact subset of
$\mathfrak B$.
\end{enumerate}

Then the primitive $F$ of $\varphi$ is positively remotely almost
periodic (respectively, positively remotely $\tau$-periodic or
positively remotely stationary) function.
\end{theorem}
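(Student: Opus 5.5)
The plan is to work in the skew-product dynamical system used in the proof of Theorem \ref{thP1}, and to reduce the claim to Theorem \ref{thRAP4}: show that the point $x=(0,\varphi)$, whose first coordinate is $F$, is remotely comparable by the character of recurrence with $\varphi$.

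First, by Corollary \ref{corISP2}, $F$ is positively Lagrange stable in $(C(\mathbb T,\mathfrak B),\mathbb T,\sigma)$. Consequently $x$ is positively Lagrange stable in the skew-product system $(X,\mathbb T,\pi)$, $X:=\mathfrak B\times L^{p}_{loc}(\mathbb T,\mathfrak B)$, $\pi(t,(v,g))=(v+\int_0^t g(s)ds,g^{t})$. Its $\omega$-limit set $\omega_x$ is then nonempty, compact and invariant, and $h=pr_2$ maps $\omega_x$ onto $\omega_\varphi$.

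\textbf{Key step (main obstacle): fibres of $\omega_x$ over $\omega_\varphi$ are singletons.} Since $\omega_\varphi$ is minimal and compact, every $\psi\in\omega_\varphi$ is recurrent, hence $S^{p}$ Poisson stable. For $(v,\psi)\in\omega_x$, the function $v+\int_0^t\psi(s)ds$ takes values in $\overline{F(\mathbb T)}$, so it is a precompact primitive of $\psi$. The argument of Theorem \ref{thP1} then applies verbatim, with $H(\varphi)$ replaced by the compact minimal set $\omega_\varphi$. Take a minimal set $M\subseteq\omega_x$; it projects onto $\omega_\varphi$. Theorem \ref{thP0} gives $\mathfrak N_{\psi}\subseteq\mathfrak N_{(v,\psi)}$, and the distality argument (\ref{eqSP3})--(\ref{eqSP6}) shows each fibre $M_\psi$ is a single point. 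The delicate part is passing from $M$ to all of $\omega_x$, since the fibre $(\omega_x)_\psi$ may a priori be larger. I would handle this as follows. Two points $(v_1,\psi),(v_2,\psi)\in\omega_x$ differ only by the constant $v_1-v_2$, because their motions are $v_i+\int_0^t\psi$. Hence $\omega_x$ is a union of translates $M+(c,0)$, and $M+(c,0)$ is again minimal. If $c\neq0$ occurred, I would repeat the construction in the proof of Theorem \ref{thP0}, using recurrence of $\psi$ and compactness of $\overline{F(\mathbb T)}$. Invariance and closedness of $\omega_x$ would give that $\omega_x$ contains $M+(nc,0)$ for all $n$, contradicting boundedness of $\overline{F(\mathbb T)}$. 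So $\omega_x=M$, and by Theorem \ref{thP1} applied on $M$ there is a continuous $\nu:\omega_\varphi\to\mathfrak B$ with $\omega_x=\{(\nu(\psi),\psi)\}$, as in Corollary \ref{corP2.1}.

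\textbf{Remote comparability.} Let $\{t_n\}\in\mathfrak L^{+\infty}_\varphi$, so $\varphi^{t_n}\to\psi\in\omega_\varphi$ and $t_n\to+\infty$. By Lagrange stability, every subsequence of $\pi(t_n,x)$ has a further convergent subsequence. Its limit lies in $\omega_x$ over $\psi$, hence equals $(\nu(\psi),\psi)$. Thus $\pi(t_n,x)$ converges, i.e.\ $\mathfrak L^{+\infty}_\varphi\subseteq\mathfrak L^{+\infty}_x$. By Theorem \ref{th1SRAP}, $\varphi$ is a positively Lagrange stable, remotely almost periodic (respectively remotely $\tau$-periodic or stationary) point of the shift system. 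Theorem \ref{thRAP4} then yields that $x$ has the same property. Finally, I would project to the first coordinate: $\pi(t+\tau,x)$ close to $\pi(t,x)$ in $X$ for large $t$ implies $|F(t+\tau)-F(t)|$ small. By Lemma \ref{lAPF3.1} (passing from motion to function, using Lemma \ref{lISP1} to control compact intervals), $F$ is positively remotely almost periodic (respectively remotely $\tau$-periodic or remotely stationary).
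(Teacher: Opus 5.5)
Your key step is false, and the reduction to Theorem \ref{thRAP4} fails with it. The fibres of $\omega_x$ over $\omega_\varphi$ need not be singletons, and $F$ need not be remotely comparable with $\varphi$.

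Example \ref{exIC2} satisfies every hypothesis of the theorem. There $\varphi(t)\to 0$, so $\varphi$ is remotely stationary and $\omega_\varphi=\{0\}$ is minimal, and $F(t)=\sin(\pi^{3}+t^{2})^{1/3}$ is precompact. Yet $\omega_x=\{(c,0):\ c\in[-1,1]\}$, which is a segment of fixed points (compare the description of $\omega_F$ there). Every sequence $t_n\to+\infty$ lies in $\mathfrak L^{+\infty}_\varphi$. But you can choose $t_n\to+\infty$ at which $F$ alternately takes the values $1$ and $-1$. Then $\{F(t_n)\}$ diverges, so $\mathfrak L^{+\infty}_\varphi\not\subseteq\mathfrak L^{+\infty}_x$.

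Your argument against $c\neq 0$ is exactly where this breaks. Shcherbakov's construction in Theorem \ref{thP0} needs a single Poisson stable motion that returns to its own fibre shifted by $c$, i.e. $(v+c,\psi)$ must lie in the orbit closure of $(v,\psi)$. If $M$ and $M+(c,0)$ are distinct minimal sets, that orbit closure is $M$, which does not contain $(v+c,\psi)$. The motion of $x$ is only positively Lagrange stable, not recurrent. So nothing forces the set of shifts $c$ with $M+(c,0)\subseteq\omega_x$ to be closed under addition; in the example it is an interval.

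The ingredients you already have suffice if you use your constant-difference observation the other way round. This is the paper's proof. Keep the continuous section $\nu$ (given by $M$, or by Corollary \ref{corP2.1}), but drop the claim $\omega_x=M$.

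\begin{itemize}
\item For $(v,\psi)\in\omega_x$ put $\Phi(t)=v+\int_0^t\psi(s)ds$. Then $\Phi(t)=\nu(\sigma(t,\psi))+v-\nu(\psi)$.
\item Hence $\Phi(t+\tau)-\Phi(t)=\nu(\sigma(t+\tau,\psi))-\nu(\sigma(t,\psi))$; the additive constant drops out.
\item $\nu$ is uniformly continuous on the compact set $\omega_\varphi$, and $\omega_\varphi$ is $S^{p}$ equi-almost periodic (Theorem \ref{th1SRAP}). Together these make the increments uniformly small, over all $t$ and all points of $\omega_x$, for a relatively dense set of $\tau$.
\item So $\omega_x$ is equi-almost periodic although it is not minimal. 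Lemma \ref{lRAP1} then gives remote almost periodicity of $x$, hence of $F$.
\end{itemize}

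In the $\tau$-periodic or stationary case, every $\psi\in\omega_\varphi$ is $\tau$-periodic by Theorem \ref{th1.3.9}. The same identity then makes every point of $\omega_x$ $\tau$-periodic, and Theorem \ref{th1.3.9} applies again.
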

\begin{proof} Denote by $Y:=H(\varphi)$ the closure of the set
$\{\sigma(t,\varphi)|\ t\in \mathbb T\}$ in the space
$L^{p}_{loc}(\mathbb T,\mathfrak B)$ and $(Y,\mathbb T,\sigma)$
the shift dynamical system on $Y$ induced by $(L^{p}_{loc}(\mathbb
T,\mathfrak B),$ $\mathbb T,$ $\sigma)$, $X:=\mathfrak B\times Y$
and $(X,\mathbb T,\pi)$ the skew-product dynamical system:
$\pi(t,(v,\psi)):=(\phi(t,v,\psi),\sigma(t,\psi))$ for all
$(t,v,\psi)\in \mathbb T\times \mathfrak B\times Y$, where
\begin{equation}\label{eqG2}
\phi(t,v,\psi):=v+\int_{0}^{t}\psi(s)ds .\nonumber
\end{equation}
Note that the mapping $\phi :\mathbb T\times \mathfrak B\times
Y\to \mathfrak B$ possesses the following properties:
\begin{enumerate}
\item $\phi(0,v,\psi)=v$ for every $v\in \mathfrak B$ and $\psi\in
Y$; \item
$\phi(t+\tau,v,\psi)=\phi(t,\phi(\tau,v,\psi),\sigma(\tau,\psi))$
for all $t,\tau\in \mathbb T$, $v\in \mathfrak B$ and $\psi \in
Y$; \item the mapping $\phi$ is continuous.
\end{enumerate}
The first two properties are evident. The third property follows
from Lemma \ref{lISP1} (item (iii)).

Thus $\langle \mathfrak B,\phi, (Y,\mathbb T,\sigma)\rangle$
(shortly $\phi$) is a cocycle over dynamical system $(Y,\mathbb
T,\sigma)$ with the fibre $\mathfrak B$ associated by $\phi$.

Consider a nonautonomous dynamical system $\langle (X,\mathbb
T,\pi), (Y,\mathbb T,\sigma),h\rangle$, where $h:X\to Y$ is
defined by $h:= pr_{2}$ generated by the cocycle $\phi$.

Since the function $\varphi$ is $S^{p}$ positively Lagrange stable
and the primite $F$ of $\varphi$ is precompact then by Corollary
\ref{corISP2} it is also positively Lagrange stable in the shift
dynamical system $(C(\mathbb T,\mathfrak B),\mathbb T,\sigma)$.

Denote by $x_0:=(0,\varphi)\in X$, where $0$ is the null element
of $\mathfrak B$. Under the condition of Theorem the point $x_0\in
X$ is positively Lagrange stable in $(X,\mathbb T,\pi)$. Note that
$\widetilde{X}:=H^{+}(x_0)$ is a nonempty, compact and positively
invariant subset of $(X,\mathbb T,\pi)$. Since $\varphi =h(x_0)$
is positively $S^{p}$ Lagrange stable then
$h(H^{+}(x_0))=H^{+}(\varphi)$ and, consequently, the
nonautonomous dynamical system $\langle (X,\mathbb
T,\pi),(Y,\mathbb T,\sigma),h\rangle$ induces on
$\widetilde{X}=H^{+}(x_0)$ (respectively, on $\omega_{x_0}$) a
semi-group (respectively, a two-sided) nonautonomous dynamical
system $\langle (\widetilde{X},\mathbb
R_{+},\pi),(\widetilde{Y},\mathbb R_{+},\sigma),h\rangle$
(respectively, $\langle (\omega_{x_0},$ $\mathbb R,$ $\pi),$
$(\omega_{\varphi},$ $\mathbb R,$ $\sigma),$ $h\rangle$), where
$\widetilde{Y}:=H^{+}(\varphi)$.

Now we will show that there exist at least one continuous mapping
$\nu :\omega_{\varphi}\to \mathfrak B$ such that
\begin{equation}\label{eqN1}
\nu(\sigma(t,\psi))=\phi(t,\nu(\psi),\psi) \nonumber
\end{equation}
for all $(t,\psi)\in \mathbb R\times \omega_{\varphi}$. To this
end we fix a point $\widetilde{x} \in \omega_{x_0}.$

Since the point $x_0$ is positively Lagrange stable in the
skew-product dynamical system $(X,\mathbb T,\pi)$ then
$\omega_{x_0}$ is a nonempty, compact and invariant set. For the
point $\widetilde{x}\in \omega_{x_0}$ there exists a sequence
$t_n\to +\infty$ such that
$\pi(t_n,x_0)=(\phi(t_n,x_0,\varphi),\sigma(t_n,\varphi))\to
(\widetilde{u},\widetilde{\psi})=\widetilde{x}$ as $n\to \infty$.

Note that
\begin{equation}\label{eqMRAP1}
F^{t_n}(t)=F^{t_n}(0)+\int_{0}^{t}\varphi^{t_n}(s)ds
\end{equation}
for all $t\in \mathbb T$ and, consequently, passing to the limit
in (\ref{eqMRAP1}) as $n\to \infty$ we obtain
\begin{equation}\label{eqMRAP1.1}
\widetilde{F}(t)=\widetilde{F}(0)+\int_{0}^{t}\widetilde{\psi}(s)ds
.\nonumber
\end{equation}
Taking into account that $\widetilde{F}(\mathbb R)\subseteq
Q=\overline{\varphi(\mathbb R_{+})}$ we conclude that the function
$\widetilde{F}$ is a compact primitive of the function
$\widetilde{\psi}\in \omega_{\psi}$. On the other hand since the
function $\varphi$ is $S^{p}$ remotely almost periodic then by
Theorem \ref{th1SRAP} the function $\widetilde{\psi}$ is $S^{p}$
almost periodic and by Corollary \ref{corP2.1} there exist a
continuous mapping $\nu :\omega_{\varphi}\to \mathfrak B$ such
that
\begin{equation}\label{eqMRAP1.2}
\nu(\widetilde{\psi})=\widetilde{u}\ \ \mbox{and}\ \
\nu(\sigma(t,\psi))=\phi(t,\nu(\psi),\psi) \nonumber
\end{equation}
for all $(t,\psi)\in \mathbb R\times \omega_{\varphi}$.

Now we will show that the $\omega$-limit set $\omega_{x_0}$ of
$x_0$ (or equivalently, the $\omega$-limit set
$\omega_{(F,\varphi)}$ of $(F,\varphi)$) is an equi-almost
periodic set. To prove this fact we fix an arbitrary positive
number $\varepsilon$.

By uniform continuity of the map $\gamma : \omega_{\varphi}\to
\omega_{x_0}$ (respectively, the map $\nu :\omega_{\varphi}\to
I:=pr_1(\omega_{x_0})$ because $\gamma =(\nu,Id_{Y})$) for
$\varepsilon/2$ there exists a positive number
$\delta=\delta(\varepsilon/2)$
($0<\delta(\varepsilon)<\varepsilon$) such that
$\rho_{S^{p}}(\theta_{1},\theta_{2})<\delta$ implies
\begin{equation}\label{eqMRAP2}
\rho_{X}(\gamma(\theta_{1}),\gamma(\theta_{2}))<\varepsilon/2\ \
(\mbox{respectively},\ \rho_{\mathfrak B}(\nu
(\theta_1),\nu(\theta_2))<\varepsilon/2) ,
\end{equation}
where $\rho_{\mathfrak B}(u,v):=|u-v|$ for all $u,v\in \mathfrak
B$, $\rho_{X}(x_{1},x_{2})=\rho_{\mathfrak
B}(v_1,v_2)+\rho_{S^{p}}(\theta_{1},\theta_{2})$ and
$x_{i}=(v_{i},\theta_{i})$ (i=1,2).

Since the function $\varphi$ is positively Lagrange stable and
remotely almost periodic then by Theorem \ref{th1SRAP} the set
$\omega_{\varphi}$ is $S^{p}$ equi-almost periodic. This means
that the set
\begin{equation}\label{eqMRAP3}
\mathcal F(\varepsilon/2,\omega_{\varphi}):=\{\tau \in \mathbb R|\
\sup\limits_{t\in \mathbb T,\ \theta \in
\omega_{\varphi}}\rho_{S^{p}}(\sigma(t+\tau,\theta),\sigma(t,\theta))<\delta(\varepsilon/2)\}
\end{equation}
is relatively dense in $\mathbb R$.

If $\tau\in \mathcal F(\varepsilon/2,\omega_{\varphi})$, then from
(\ref{eqMRAP2}) and (\ref{eqMRAP3}) we have
$$
\rho_{X}(\phi(t+\tau,\gamma(\theta),\theta),\phi(t,\gamma(\theta),\theta))=
\rho_{X}(\gamma(\sigma(t+\tau,\theta)),\gamma(\sigma(t,\theta))<\varepsilon/2
$$
(respectively,
$$
\rho_{\mathfrak
B}(\phi(t+\tau,\nu(\theta),\theta),\phi(t,\nu(\theta),\theta))=
\rho_{\mathfrak
B}(\nu(\sigma(t+\tau,\theta)),\nu(\sigma(t,\theta))<\varepsilon/2
)
$$
for all $t\in \mathbb R$ and $\theta \in \omega_{\varphi}$, i.e.,
$\tau \in \mathcal F(\varepsilon, \omega_{\varphi})$.

Let now $(\Phi,\psi)\in \omega_{(F,\varphi)}$ be an arbitrary
point then as we established above $\Phi'(t)=\psi(t)$ for all
$t\in \mathbb R$. On the other hand
$\phi(t,\nu(\psi),\psi)'=\psi(t)$ and, consequently,
\begin{equation}\label{eqMRAP6}
\Phi (t)=\nu(\sigma(t,\psi))+\Phi(0)-\nu(\psi)
\end{equation}
for all $t\in \mathbb R$. From (\ref{eqMRAP6}) we obtain
\begin{equation}\label{eqMRAP07}
\rho_{\mathfrak B}(\Phi(t+\tau),\Phi(t))=\rho_{\mathfrak
B}(\nu(\sigma(t+\tau,\psi),\gamma(\sigma(t,\psi)))<\varepsilon/2 .
\end{equation}

According to (\ref{eqMRAP3}) and (\ref{eqMRAP07}) we obtain
\begin{equation}\label{eqMRAP7}
\rho_{X}((\Phi(t+\tau),\sigma(t+\tau,\psi)),\Phi(t),\sigma(t,\psi))=\nonumber
\end{equation}
$$
\rho_{\mathfrak
B}(\Phi(t+\tau),\Phi(t))+\rho_{S^{p}}(\sigma(t+\tau,\psi),\sigma(t,\psi))<\varepsilon/2+\varepsilon/2=\varepsilon
$$
for all $t\in \mathbb T$ and $(\Phi,\psi)\in
\omega_{(F,\varphi)}$.

This means that the set $\omega_{(F,\varphi)}$ is equi-almost
periodic. To finish the proof we note that if the set
$\omega_{\varphi}$ consists of $\tau$-periodic  (respectively,
stationary) points, then the set $\omega_{(F,\varphi)}$ is so.
Theorem is proved.
\end{proof}

\begin{coro}\label{corIC1} Let $\varphi \in C(\mathbb R,\mathfrak
B)$ be a positively Lagrange stable function. Assume that the
following conditions are fulfilled:
\begin{enumerate}
\item $\varphi$ is a positively remotely almost periodic
(respectively, positively remotely $\tau$-periodic or positively
remotely stationary) function and its $\omega$-limit set
$\omega_{\varphi}$ is minimal; \item the primitive
$F(t):=\int_{0}^{t}\varphi(s)ds$ of the function $\varphi$ is
precompact.
\end{enumerate}

Then the primitive $\Phi$ of $\varphi$ is positively remotely
almost periodic (respectively, positively remotely $\tau$-periodic
or positively remotely stationary) function.
\end{coro}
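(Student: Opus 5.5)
The plan is to reduce Corollary \ref{corIC1} to Theorem \ref{thIC2} by viewing the continuous function $\varphi$ as an element of $L^{p}_{loc}(\mathbb R,\mathfrak B)$, say with $p=1$. Three things need checking.

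First, positive Lagrange stability of $\varphi$ in $C(\mathbb R,\mathfrak B)$ implies positive Lagrange stability in $L^{1}_{loc}(\mathbb R,\mathfrak B)$. The natural embedding $C(\mathbb R,\mathfrak B)\hookrightarrow L^{1}_{loc}(\mathbb R,\mathfrak B)$ is continuous, since $\int_{-\ell}^{\ell}|\psi_1-\psi_2|ds\le 2\ell\max_{|s|\le \ell}|\psi_1(s)-\psi_2(s)|$, and it commutes with the shifts. So it maps the precompact set $\Sigma^{+}_{\varphi}$ onto a precompact set.

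Second, positive remote almost periodicity in the uniform sense implies $S^{1}$ remote almost periodicity with the same $\varepsilon$-almost periods. If $|\varphi(t+\tau)-\varphi(t)|<\varepsilon$ for $t\ge L$, then $\int_0^1|\varphi(t+\tau+s)-\varphi(t+s)|ds<\varepsilon$ for $t\ge L$. The same argument handles the remotely $\tau$-periodic and remotely stationary cases.

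Third, the $\omega$-limit set computed in $L^{1}_{loc}$ is still minimal. Here I would use that the embedding $j$ is a continuous injective homomorphism of dynamical systems and $\overline{\Sigma^{+}_{\varphi}}$ is compact, so $j$ restricted to it is a homeomorphism onto its image. Consequently $j(\omega_{\varphi})$ is exactly the $L^{1}_{loc}$ $\omega$-limit set, and minimality transfers through this conjugacy.

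Theorem \ref{thIC2} then applies once its hypothesis 2 is met. For this I read hypothesis 2 as precompactness of $F(\mathbb R_{+})$, which is what its proof actually uses. Note that Theorem \ref{thIC2} writes $Q=\overline{\varphi(\mathbb R_{+})}$ while the Corollary writes $\Phi$; both are typos for $F$, which is the primitive assumed precompact in the Corollary. The conclusion is that $F$ is positively remotely almost periodic (respectively, remotely $\tau$-periodic or stationary) in $C(\mathbb R,\mathfrak B)$. If needed, Lemma \ref{lAPF3.1} translates the dynamical statement into the functional one. Any other compact primitive differs from $F$ by a constant, which preserves all these properties.

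The main obstacle is the third step, the identification of $\omega$-limit sets and the transfer of minimality. The compactness and homeomorphism argument should settle it.
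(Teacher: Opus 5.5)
Your proposal is correct and takes the paper's route. The paper also obtains Corollary \ref{corIC1} directly from Theorem \ref{thIC2}, citing only that uniform remote almost periodicity (respectively, remote $\tau$-periodicity or stationarity) implies the $S^{p}$ version. Your further checks fill in details the paper leaves implicit: that positive Lagrange stability and minimality of $\omega_{\varphi}$ survive the embedding $C(\mathbb R,\mathfrak B)\hookrightarrow L^{p}_{loc}(\mathbb R,\mathfrak B)$ (by the compactness--homeomorphism argument), and that the two $Q$/$\Phi$ typos both mean $F$.
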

\begin{proof} This statement directly follows from Theorem \ref{thIC2}
because every remotely almost periodic (respectively, remotely
$\tau$-periodic or remotely stationary) function $\varphi \in
C(\mathbb R,\mathfrak B)$ is $S^{p}$ remotely almost periodic
(respectively, remotely $\tau$-periodic or remotely stationary).
\end{proof}

\begin{remark}\label{remIC1} Corollary \ref{corIC1} confirm the conjecture
formulated in the work \cite{Che_2024.2}.
\end{remark}

\begin{remark}\label{remIC2} Note that under the conditions of
Corollary \ref{corIC1} the compact primitive
$F(t)=\int_{0}^{t}\varphi(s)ds$ of the function $\varphi \in
C(\mathbb T,\mathfrak B)$ is remotely stationary (respectively,
remotely $\tau$-stationary or remotely almost periodic). But the
$\omega$-limit set $\omega_{F}$ of $F$ can be either minimal or
non-minimal.
\end{remark}

\section{Examples}\label{Sec4}

Below we will give two examples illustrating Remark \ref{remIC2}.
Namely an example with minimal $\omega_{F}$ (Example \ref{exIC1})
an other one with the non minimal set $\omega_{F}$ (Example
\ref{exIC2}).

\begin{example}\label{exIC1}
{\em We note (see, for example, \cite{Che_2024.2}) that the
function $\psi (t)=\sin(t+\ln(1+|t|))$ ($t\in \mathbb R$)
possesses the following properties:
\begin{enumerate}
\item $\psi$ is remotely $2\pi$-periodic; \item the set
$\omega_{\psi}$ is minimal; \item $\psi$ is not asymptotically
almost periodic.
\end{enumerate}

Consider the function $\varphi\in C(\mathbb R_{+},\mathbb R)$
defined by the equality $\varphi(t):=\cos (t+\ln (1+t))$ for every
$t\in \mathbb R_{+}$. The function $\varphi$ possesses the
following properties:
\begin{enumerate}
 \item
$$
|\cos(t+2\pi +\ln(1+t+2\pi))-\cos(t+\ln(1+t))|=
$$
$$
2|\sin(t+\frac{\ln(1+t+2\pi)(1+t)}{2})\sin(\frac{\ln(1+\frac{2\pi}{1+t})}{2})|\le
$$
$$
|\ln(1+\frac{2\pi}{1+t})|\to 0
$$
as $t\to +\infty$ and, consequently, the function $\varphi$ is
remotely $2\pi$-periodic; \item $\varphi$ is bounded on the
$\mathbb R_{+}$; \item
\begin{equation}\label{eqIC1.03}
\varphi'(t)=-(1+\frac{1}{1+t})\sin(t+\ln(1+t)) \nonumber
\end{equation}
for all $t\in \mathbb R_{+}$. It is easy to see that the
derivative $\varphi'$ is bounded on $\mathbb R_{+}$ and,
consequently, the function $\varphi$ is uniformly continuous on
$\mathbb R_{+}$; \item the function $\varphi$ is positively
Lagrange stable and, consequently, its $\omega$-limit set
$\omega_{\varphi}$ is nonempty, compact and invariant (in the
shift dynamical system $(C(\mathbb R_{+},\mathbb R),\mathbb
R_{+},\sigma)$); \item
\begin{equation}\label{eqIC1.04}
\omega_{\varphi}=\{\cos^{h}:\ h\in [0,2\pi)\}.\nonumber
\end{equation}
To prove this equality we note that the function
$\widetilde{\varphi}\in \omega_{\varphi}$ if and only if there exists
a sequence $\{h_k\}\subset \mathbb R_{+}$ such that $h_k\to
+\infty$ and $\varphi^{h_k}\to \widetilde{\varphi}$ (in the space
$C(\mathbb R_{+},\mathbb R)$) as $k\to \infty$.

It easy to see that
$$
\widetilde{\varphi}(t)=\lim\limits_{k\to
\infty}\cos(r+h_{k}+\ln(1+t+h_k))=
$$
\begin{equation}\label{eqF1}
\lim\limits_{k\to \infty}\cos(t+h_{k} +\ln (2\pi
h_{k})+\ln(1+\frac{1+t}{h_k})
\end{equation}
for every $k\in \mathbb N$.

Note that
\begin{equation}\label{eqF2}
 h_k +\ln (2\pi m_{k})=2\pi m_k+\tau_{k}
 \end{equation}
 for all $k\in
\mathbb N$, where $m_k\in \mathbb N$. Without loos of the
generality we can suppose that the sequences $\{\tau_{k}\}$ is
convergent and its limit $\widetilde{\tau}\in [0,2\pi)$, i.e.,
\begin{equation}\label{eqF3}
\widetilde{\tau}=\lim\limits_{k\to \infty}\tau_{k} .
\end{equation}
Passing to the limit in the equality (\ref{eqF1}) and taking into
account (\ref{eqF2})-(\ref{eqF3}) we have
\begin{equation}\label{eqF4}
\widetilde{\varphi}(t)=\cos(t+\tau_{0}) \nonumber
\end{equation}
for every fixed $t\in \mathbb R_{+}$, that is,
$\widetilde{\varphi}=\cos^{\tau_{0}}$ (for some $\tau_{0}\in
[0,2\pi)$). Thus the equality is established. In particular we
obtain that the set $\omega_{\varphi}$ is minimal. \item the
function $\varphi$ is not asymptotically $2\pi$-periodic.

If we assume that it is not true, then there exists a
$2\pi$-periodic function $\widetilde{\varphi}\in \omega_{\varphi}$
(and, consequently, there exists a number $\alpha \in [0,2\pi)$
with property $\widetilde{\varphi}=\cos^{\alpha}$) such that
\begin{equation}\label{eqF5}
\lim\limits_{t\to +\infty}|\varphi(t)-\widetilde{\varphi}(t)|=0 .
\end{equation}

Note that
\begin{equation}\label{eqF6}
\psi^{2}(t)+\varphi^{2}(t)=\sin^{2}(t+\ln
(1+t))+\cos^{2}(t+\ln(1+t))=1 \nonumber
\end{equation}
and, consequently,
\begin{equation}\label{eqF7} .
\psi^{2}(t)=1-\varphi^{2}(t)
\end{equation}
for all $t\in \mathbb R_{+}$. From (\ref{eqF7}), taking into
account (\ref{eqF5}), we conclude that the function $\psi$ is
asymptotically $2\pi$-periodic. The last statement contradicts to
the choice of $\psi$. The obtained contradiction prove our
statement. \item
\begin{equation}\label{eqF8}
\psi'(t)=(1+\frac{1}{1+t})\varphi(t):=\mu(t) \nonumber
\end{equation}
for all $t\in \mathbb R_{+}$, i.e., the function $\psi$ is a
bounded primitive of the remotely $2\pi$-periodic function $\mu$
(as the product of two remotely $2\pi$-periodic functions) with
minimal $\omega$-limit set $\omega_{\mu}=\omega_{\varphi}$.
\end{enumerate}
}
\end{example}

\begin{example}\label{exIC2} Consider the function $\varphi \in C(\mathbb R_{+},\mathbb
R)$defined by the equality
\begin{equation}\label{eqF9}
\varphi(t)=\frac{2t}{(\pi^{3}+t^{2})^{2/3}}\cos
(\pi^{3}+t^{2})^{1/3}\nonumber
\end{equation}
for every $t\in \mathbb R_{+}$.

The function $\varphi$ possesses the following properties:
\begin{enumerate}
\item $\lim\limits_{t\to +\infty}|\varphi(t)|=0$ and,
consequently, the function $\varphi$ is asymptotically stationary
and $\omega_{\varphi}=\{0\}$. Thus its $\omega$-limit set is
minimal; \item $F(t)=\int_{0}^{t}\varphi(s)ds=\sin
(\pi^{3}+t^2)^{1/3}$ for all $t\in \mathbb R_{+}$; \item for every
fixed $\tau
>0$ we have
$$
|F(t+\tau)-F(t)|=
|\sin(\pi^{3}+(t+\tau)^{2})^{1/3}-\sin(\pi^{3}+t^{2})^{1/3}|=
$$
$$
2|\sin\frac{(\pi^{3}+(t+\tau)^{2})^{1/3}-(\pi^{3}+t^{2})^{1/3}}{2}
\cos(\frac{(\pi^{3}+(t+\tau)^{2})^{1/3}+(\pi^{3}+t^{2})1^{1/3}}{2})|=
$$
$$
2|\sin\frac{\tau (t+\tau)}{2((\pi^{3}+(t+\tau)^{2})^{2/3}+
(\pi^{3}+(t+\tau)^{2})^{1/3}(\pi^{3}+t^{2})^{1/3}
+(\pi^{3}+t^{2})^{2/3})}|\cdot
$$
$$
|\cos(\frac{(\pi^{3}+(t+\tau)^{2})^{1/3}+(\pi^{3}+t^{2})^{1/3}}{2})|\le
$$
$$
\frac{\tau (t+\tau)}{(\pi^{3}+(t+\tau)^{2})^{2/3}+
(\pi^{3}+(t+\tau)^{2})^{1/3}(\pi^{3}+t^{2})^{1/3}
+(\pi^{3}+t^{2})^{2/3}} \to 0
$$
as $t\to +\infty$, i.e., the function $F$ is remotely stationary;
\item the primitive $F$ is bounded and uniformly continuous on
$\mathbb R_{+}$ and, consequently, $F$ is positively Lagrange
stable (thus its $\omega$-limit set is nonempty, compact and
invariant); \item $\omega_{F}=\{F_{c}:\ c\in [-1,1]\}$, where
$F_{c}(t)=c$ for all $t\in \mathbb R$ (for the details see
\cite{Che_2024.2}).
\end{enumerate}
\end{example}

\textbf{Problem.} Assume that the Banach space $\mathfrak B$ does
not contain a subspace isomorphic to $c_{0}$. Find out which
results of Section \ref{Sec3} are preserved when replacing the
condition of "precompactness" with "boundedness".

\textbf{Open problem.} The question, is whether Theorem
\ref{thIC2} remains true in the general case when the set
$\omega_{\varphi}$ is not minimal, is open.

\section{Funding}

This research was supported by the State Program of the Republic
of Moldova "Remotely Almost Periodic Solutions of Differential
Equations (25.80012.5007.77SE)" and partially was supported by the
Institutional Research Program 011303 "SATGED", Moldova State
University.

\section{Data availability}

No data was used for the research described in the article.

\section{Conflict of Interest}

The author declares that he does not have conflict of interest.

\end{document}